\newcommand{\R}{\mathbf{R}}
\newcommand{\N}{\mathbf{N}}
\newcommand{\Z}{\mathbf{Z}}
\newcommand{\Q}{\mathbf{Q}}
\newcommand{\frakR}{\mathfrak{R}}
\newcommand{\calR}{\mathcal{R}}
\newcommand{\frakT}{\mathfrak{T}}
\newcommand{\M}{{\mathcal{M}}}
\newcommand{\calS}{\mathcal{S}}
\newcommand{\calK}{\mathcal{K}}
\newcommand{\calL}{\mathcal{L}}
\renewcommand{\phi}{\varphi}
\renewcommand{\epsilon}{\varepsilon}
\renewcommand{\newline}{\\ \strut}
\theoremstyle{plain}
\newtheorem{theorem}{Theorem}[section]
\newtheorem{lemma}[theorem]{Lemma}
\newtheorem{cor}[theorem]{Corollary}
\newtheorem{prop}[theorem]{Proposition}
\newtheorem{conj}[theorem]{Conjecture}
\numberwithin{equation}{section}
\begin{document}
\thanks{The author was supported by a stipend (EliteForsk) from The Danish Agency for Science, Technology and Innovation.}
\title{Divisor problems and the pair correlation for the fractional parts of $n^2\alpha$}    
\author{Jimi L. Truelsen}
\address{Department of Pure Mathematics and Mathematical Statistics, University of Cambridge, Wilberforce Road, Cambridge CB3 0WB, United Kingdom}
\email{lee@imf.au.dk}
\subjclass[2000]{Primary 11B05, 11K31; Secondary 11J54}
\date{\today}
\begin{abstract}
Z. Rudnick and P. Sarnak have proved that the pair correlation for the fractional parts of $n^2 \alpha$ is Poissonian for almost all $\alpha$. However, they were not able to find a specific $\alpha$ for which it holds. We show that the problem is related to the problem of determining the number of $(a,b,r) \in \N^3$ such that $a \le M$, $b \le N$, $r \le K$ and $p ab \equiv r (q)$ for $p$ and $q$ coprime. With suitable assumptions on the relative size of $K$, $M$, $N$ and $q$ one should expect there to be $KMN/q$ such triples asymptotically and we will show that this holds on average.
\end{abstract}
\maketitle
\section{Introduction}
For $t \in \R$ and $q \in \N$ let
\begin{align*}
\Vert t \Vert_q = \inf_{n \in \Z} \vert t - qn \vert,
\end{align*}
and set $\Vert \cdot \Vert = \Vert \cdot \Vert_1$. Clearly $\Vert \cdot \Vert_q$ defines a norm on $\R/q\Z$. For a sequence $\{a_n \}_1^\infty \subset \R/\Z$, $x > 0$ and $N \in \N$ we define
\begin{align*}
R_2(x,N,\{a_n\}_{1}^\infty) = N^{-1}\#\left\{(m,n)\in \N^2 \big| m,n \le N, \ n \ne m, \ \Vert a_m - a_n \Vert \le \frac{x}{N}\right\}.
\end{align*}
We say that the pair correlation for $\{a_n\}_{1}^\infty$ is Poissonian if for every $x > 0$ we have that
\begin{align*}
\lim_{N \to \infty} R_2(x,N,\{a_n\}_{1}^\infty) = 2x.
\end{align*}
Note that the limit is not uniform in $x$. We will be particularly interested in the case where $a_n$ equals the fractional parts of $n^2 \alpha$ for $\alpha$ irrational. The spacings between the elements of this sequence correspond to the spacings between the energy levels of the boxed oscillator in quantum mechanics \cite{berry77}. We define (by an obvious abuse of notation)
\begin{align*}
R_2(x,N,\alpha) = R_2(x,N,\{n^2\alpha \}_{1}^\infty) = N^{-1}\#\left\{(m,n) \mid m,n \le N, \ n \ne m, \ \Vert m^2 \alpha - n^2 \alpha \Vert \le \frac{x}{N}\right\}.
\end{align*}
Clearly we may as well assume that $0 < \alpha < 1$. We will be interested in $\alpha$ with certain Diophantine properties. We say that an irrational number $\alpha$ is of type $\kappa$ if
\begin{align*}
\left\vert \alpha - \frac{p}{q} \right\vert \gg \frac{1}{q^\kappa}.
\end{align*}
for all $p \in \Z$ and $q \in \N$. We say that $\alpha$ is ``Diophantine'' if $\alpha$ is of type $2+\epsilon$ for any $\epsilon > 0$. In particular all real, irrational algebraic numbers are Diophantine (Roth's theorem -- see Theorem 5.7.1 in \cite{miller06}). Note also that almost all $\alpha$ (with respect to the Lebesgue measure) are Diophantine. To see this we use the identity of sets
\begin{align*}
\{ \beta \in \R \mid \beta \ {\rm is \ not \ Diophantine} \} = \bigcup_{n=1}^\infty \bigcup_{l \in \Z} \bigcap_{k=1}^\infty \bigcup_{q=k}^\infty \bigcup_{p=1}^q \left[l+\frac{p}{q}-\frac{1}{q^{2+1/n}},l+\frac{p}{q}+\frac{1}{q^{2+1/n}} \right].
\end{align*}
Let $\calL$ denote the Lebesgue measure on the real line. We see that
\begin{align*}
\calL\left( \bigcup_{p=1}^q \left[l+\frac{p}{q}-\frac{1}{q^{2+1/n}},l+\frac{p}{q}+\frac{1}{q^{2+1/n}} \right] \right) = 2q^{-(1+1/n)}.
\end{align*}
Since
\begin{align*}
\sum_{q=1}^\infty q^{-(1+1/n)} < \infty
\end{align*}
it follows from the Borel-Cantelli lemma that
\begin{align*}
\calL\left( \bigcap_{k=1}^\infty \bigcup_{q=k}^\infty \bigcup_{p=1}^q \left[l+\frac{p}{q}-\frac{1}{q^{2+1/n}},l+\frac{p}{q}+\frac{1}{q^{2+1/n}} \right] \right) = 0.
\end{align*}
Thus the set of non-Diophantine real numbers is a null set.

It is a classical result due to H. Weyl \cite{weyl16} that the sequence $n^d \alpha$ is equidistributed modulo $1$ for any integer $d \ge 1$. However, it is not true that the pair correlation for the fractional parts of $n^d \alpha$, $d \ge 2$ is Poissonian for all irrational $\alpha$ (for $d=1$ it is never the case -- see Exercise 12.6.3 in \cite{miller06}). A simple construction shows (see \cite{rudnick98} p. 62) that $\alpha$ must be at least of type $d + 1$.

Z. Rudnick and P. Sarnak have proved \cite[Theorem~1]{rudnick98} that the pair correlation for the fractional parts of $n^d \alpha$ is Poissonian for almost all $\alpha$. Subsequently J. Marklof and A. Str\"{o}mbergsson \cite{marklof03}, and D. R. Heath-Brown \cite{heathbrownPP} have given different proofs in the case $d = 2$. However, one does not know of any specific $\alpha$ for which it holds, but Rudnick and Sarnak \cite{rudnick98} and Heath-Brown \cite{heathbrownPP} made the following conjecture:
\begin{conj}\label{rs_conj}
Assume $\alpha$ is Diophantine. Then the pair correlation for the fractional parts of $n^2 \alpha$ is Poissonian.
\end{conj}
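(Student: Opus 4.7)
The plan is to reduce the pair correlation problem to the divisor/congruence counting problem advertised in the abstract, so that a pointwise (rather than merely on-average) version of that estimate would settle the conjecture. First I would smooth the counting. Pick a Schwartz function $f$ with $\hat f$ supported in a small interval which pinches the indicator of $[-x,x]$ from above and below; then
\begin{align*}
R_2(x,N,\alpha) \approx N^{-1}\sum_{\substack{m,n\le N\\ m\ne n}} f\!\bigl(N\,\|(m^2-n^2)\alpha\|\bigr),
\end{align*}
and Poisson summation in the congruence class mod $1$ replaces the inner sum by exponential sums of the form
\begin{align*}
\sum_k \hat f(k/N) \sum_{m\ne n\le N} e\bigl(k(m^2-n^2)\alpha\bigr),
\end{align*}
where $e(t)=e^{2\pi i t}$. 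The $k=0$ term contributes the expected $2x$, so everything reduces to bounding the $k\neq 0$ contribution.

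Next I would factor $m^2-n^2=(m-n)(m+n)=ab$ with $a=m-n$, $b=m+n$ of the same parity, $1\le a<b\le 2N$. Reparameterising, we need to control
\begin{align*}
\frac{1}{N}\sum_{k\neq 0}\hat f(k/N)\sum_{a,b} e(kab\alpha),
\end{align*}
so the task becomes understanding how often $kab\alpha$ lies close to an integer. Here the Diophantine hypothesis enters: choose a convergent $p/q$ from the continued fraction of $\alpha$ with denominator $q$ of size comparable to $N$, and use $\alpha = p/q+\theta$ with $|\theta|\ll 1/q^{2+\epsilon}$. Writing $e(kab\alpha)=e(kabp/q)\,e(kab\theta)$, the first factor depends only on $kab\bmod q$, and collecting $a,b$ according to the residue class $r\equiv pkab\pmod q$ turns the main estimate precisely into the problem announced in the abstract: count
\begin{align*}
\mathcal{T}(M,N,K;p,q)=\#\{(a,b,r)\in\N^3 : a\le M,\; b\le N,\; r\le K,\; pab\equiv r\pmod q\},
\end{align*}
and show $\mathcal{T}=KMN/q+o(KMN/q)$ in the relevant ranges of $K,M,N,q$ dictated by $\hat f$ and the Diophantine exponent $2+\epsilon$.

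The soft step is the reduction just sketched; the hard step is the number theory. Heuristically each pair $(a,b)$ contributes one admissible $r$ with probability $K/q$, giving the main term $KMN/q$. Partial summation in $a$ and $b$, Weil-type bounds for incomplete Kloosterman sums, and the divisor bound $\tau(n)\ll n^{\epsilon}$ give savings on the error, but the ranges one actually needs (in particular $K$ significantly smaller than $q$ and $MN$ of order $q^{1+\delta}$) sit just beyond what classical techniques deliver for a single fixed $(p,q)$. The main obstacle, therefore, is promoting the on-average estimate that the paper establishes to a pointwise estimate valid for every individual Diophantine $\alpha$: this requires controlling the error term $\mathcal{T}(M,N,K;p,q)-KMN/q$ uniformly in the particular convergent $p/q$ of $\alpha$, and it is precisely this uniformity — not the heuristic or the reduction — that keeps Conjecture \ref{rs_conj} open.
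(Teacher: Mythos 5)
This statement is a conjecture (due to Rudnick--Sarnak and Heath-Brown), and the paper does not prove it; it only proves a conditional implication (Conjecture \ref{main_conj} implies the Poissonian behaviour for $\alpha$ of type $3-\delta$, via Propositions \ref{arithmetic_form} and \ref{tauConj_implies_pc}) together with average versions of the required divisor estimate. Your proposal is honest about this and correctly identifies the shape of the reduction and the true obstacle (a pointwise, rather than on-average, count of solutions of $pab\equiv r\ (q)$). However, two points in your sketch diverge from what actually works. First, the smoothing/Poisson-summation detour is a red herring here: after expanding into exponential sums $\sum_k \hat f(k/N)\sum_{a,b}e(kab\alpha)$ you are led to Weyl/Kloosterman-type sums in the extra variable $k$, not to the divisor-congruence count $\#\{(a,b,r): pab\equiv r\ (q),\ r\le K\}$. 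The paper reaches that count directly, without smoothing: $\Vert ab\alpha\Vert\le x/N$ is converted into $\Vert abp_n\Vert_{q_n}\le xq_n/N$ up to an error $abq_n\vert\alpha-p_n/q_n\vert\le N^2/q_{n+1}$, which is a statement about residues of $ab$ modulo $q_n$ with $\vert r\vert\le xq_n/N$ and no auxiliary frequency variable.

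Second, and more seriously, your choice of a convergent with denominator $q\asymp N$ cannot work. For the replacement of $\alpha$ by $p_n/q_n$ to be harmless one needs the perturbation $N^2/q_{n+1}$ to be $o(xq_n/N)$, i.e.\ $q_nq_{n+1}\gg N^{3+\kappa}$; since $q_{n+1}\ll q_n^{1+\beta+1}$ for $\alpha$ of type $2+\beta$, this forces $q_n$ to be at least of order $N^{3/(2+\beta)}\ge N^{3/2+\delta}$ --- far larger than $N$. With $q\asymp N$ the error $N^2/q_{n+1}\approx N^{1-\epsilon}$ swamps the window $xq/N\approx x$ entirely. This is precisely why the divisor problem must be solved in the hard range $N^{3/2+\delta}\le q\le N^{2-\delta}$ (where only average results are available, Theorems \ref{tau_individ} and \ref{main_conj_sup}) rather than in the range $q\le N^{1-\delta}$ where it is elementary (Proposition \ref{triv_prop}). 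So while your overall architecture matches the paper's, the quantitative heart of the reduction --- which range of $q$ one is forced into --- is misidentified, and with $q\asymp N$ the argument would not even yield the conditional statement, let alone the conjecture itself, which remains open.
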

Furthermore, in \cite{heathbrownPP} Heath-Brown was able to show (using a lattice point strategy) that for $\alpha$ of type $9/4$
\begin{align}\label{HB_result}
R_2(x,N,\alpha) = 2x + O(x^{7/8}),
\end{align}
whenever $1 \le x \le \log N$, where the constant implied depends on $\alpha$. This supports Conjecture \ref{rs_conj} and suggests that perhaps the condition on the Diophantine approximation in the conjecture can be relaxed to some extend.

We remark that the $m$-level correlation for the fractional parts of $n^2 \alpha$ has been studied by Rudnick, Sarnak and Zaharescu in \cite{rudnick01} and by Zaharescu in \cite{zaharescu03}. It is not known if the fractional parts of $n^2 \alpha$ for almost all $\alpha$ have Poissonian behavior, i.e. have the same distribution as a sequence of independent and uniformly distributed random variables, but it is expected (cf. the conjecture on page 38 in \cite{rudnick01}).

In this paper we will only be concerned with Conjecture \ref{rs_conj} (not higher level correlations). We suggest a line of attack that is based on the study of the function
\begin{align*}
\tau_{M,N}(m) = \# \{(a,b) \in \N^2 \mid  a \le M, \ b \le N, \ ab = m\},
\end{align*}
where $m \in \N$ and $M,N \ge 1$. We also define $\tau_M^* = \tau_{M,M}$. We make the following conjecture:
\begin{conj}\label{main_conj}
Let $K,M,N \ge 1$ with $M \asymp N$ (i.e. $C_1 N\le M \le C_2 N$) and $K \ge N^\eta$ for some $\eta > 0$. Assume also that $q \le N^{2-\delta}$ for some $\delta > 0$ and $(q,\rho) = 1$. Then
\begin{align*}
\sum_{r \le K} \sum_{m\equiv \rho r(q)} \tau_{M,N}(m) \sim \frac{KMN}{q}
\end{align*}
as $N \to \infty$ uniformly in $M$, $K$, $q$ and $\rho$. The rate of convergence may depend on $\eta$, $\delta$, $C_1$ and $C_2$.
\end{conj}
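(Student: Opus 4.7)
The natural starting point is to detect the congruence $ab \equiv \rho r \pmod q$ by orthogonality of additive characters modulo $q$, writing
\[
\sum_{r \le K}\sum_{\substack{a \le M \\ b \le N}} \mathbf{1}_{ab \equiv \rho r\,(q)}
= \frac{1}{q}\sum_{k=0}^{q-1} S(k)\, T(k),
\]
where $S(k) = \sum_{r \le K} e(-k\rho r/q)$ and $T(k) = \sum_{a \le M}\sum_{b \le N} e(kab/q)$. The $k=0$ term contributes $KMN/q$, the conjectured main term, and the problem reduces to showing that $E := \frac{1}{q}\sum_{k=1}^{q-1} S(k) T(k) = o(KMN/q)$. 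In the elementary range $K \gg q\log q$ no characters are needed at all: for each pair $(a,b)$ the number of $r \le K$ with $r\equiv \rho^{-1} ab \pmod{q}$ is $K/q + O(1)$, which on summation gives $KMN/q + O(MN) = (1+o(1))KMN/q$. Hence the interesting regime is $K \ll q$.

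For $K \ll q$ one combines the geometric bound $|S(k)| \le \min(K,\Vert k\rho/q\Vert^{-1})$ with estimates for the bilinear sum $T(k)$. Writing $k = d k_1$, $q = d q_1$ with $(k_1,q_1)=1$, the inner $b$-sum is geometric and contributes $N$ when $q_1 \mid a$ and $O(\Vert k_1 a/q_1\Vert^{-1})$ otherwise, yielding $|T(k)| \ll MN/q_1 + (M+q_1)\log q$. Combining Parseval's identity $\sum_{k}|S(k)|^2 = qK$ with Cauchy--Schwarz and a second-moment bound for $T$, one covers moderate $q$. When $q$ is prime and $(k,q)=1$, $T(k)$ is an incomplete ``hyperbolic'' exponential sum, and completion of the $b$-sum followed by the Weil bound for Kloosterman sums delivers cancellation on the scale $q^{1/2+\epsilon}$; combined with a dyadic decomposition in $\Vert k\rho/q\Vert$ and the bound on $S(k)$, this should suffice in ranges $q = N^\alpha$ with $\alpha$ bounded safely away from $2$.

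The decisive obstacle is the upper extreme $q \asymp N^{2-\delta}$. Here $MN/q \asymp N^\delta$ is only a small power of $N$, the permitted error is $o(KN^\delta)$, and the range exceeds what is currently known even for the divisor function in a single arithmetic progression (essentially $q \le X^{2/3-\epsilon}$ with $X = MN$). Progress past this point seems to require Kuznetsov-type spectral input on averages of Kloosterman sums modulo $q$, combined with Linnik's dispersion method deployed on all three variables $(a,b,r)$ simultaneously. The essential difficulty is that the $r$-average has length only $K = N^\eta$, which is too short to smooth out the arithmetic fluctuations of the underlying Kloosterman sums, and it is precisely at this Kloosterman barrier, under a very short extra average, that I expect the proposal to stall and where genuinely new input would be required to complete the proof of Conjecture \ref{main_conj}.
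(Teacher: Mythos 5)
The statement you were asked to prove is Conjecture \ref{main_conj}: the paper itself offers no proof of it, only partial and average results, so your proposal --- which openly stalls at the range $q\asymp N^{2-\delta}$ --- is not missing anything that the paper supplies. Your identification of the main term via the $k=0$ character, your elementary treatment of the regime $q=o(K)$, and your diagnosis of the true obstruction (a modulus $q$ nearly as large as $MN$ combined with an average over only $K=N^{\eta}$ residues $\rho r$) are all sound and consistent with what the paper establishes. For comparison: Proposition \ref{triv_prop} proves the conjecture for $q\le N^{1-\delta}$ by a naive divisor-counting argument that, unlike your elementary range $K\gg q\log q$, imposes no lower bound on $K$ relative to $q$ (it only needs the interval of multiples of each $d\le M$ inside a residue class to have length $Nd/q\gg d$); Theorem \ref{lattice} gives a partial result for larger $q$ under a Diophantine hypothesis on $p/q$ via Heath-Brown's lattice-point method; and Theorems \ref{tau_individ} and \ref{main_conj_sup} prove the conjecture on average over $\rho$ by a dispersion-type second-moment computation resting on the Friedlander--Iwaniec expansion and the Weil bound for Kloosterman sums --- essentially the input you predict is needed, but made to work only because the extra average over all reduced residues $\rho$ modulo $q$ is long enough to absorb the fluctuations that your short $r$-average cannot.

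One technical inaccuracy worth flagging: your $T(k)=\sum_{a\le M}\sum_{b\le N}e(kab/q)$ is not an incomplete Kloosterman-type sum. The inner $b$-sum is geometric, so $|T(k)|\ll\sum_{a\le M}\min(N,\Vert ka/q\Vert^{-1})$ by entirely elementary means, and there is no completion/Weil step to apply to it; this route cannot recover square-root cancellation in $q$. The Kloosterman structure enters (as in the paper's Lemma \ref{error_lemma_i}, following Friedlander--Iwaniec) only after writing $ab\equiv s\ (q)$ as a congruence on $b$ modulo $a$ (or vice versa) and Fourier-expanding that condition, which produces sums of the shape $\sum_{\beta}e(-h(r+cq)\overline{\beta}/(\alpha k))$ amenable to Weil. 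Your additive-character decomposition over the modulus $q$ alone never reaches that structure, so even the intermediate claims about ``moderate $q$'' via Kloosterman sums would need to be reorganized along these lines before they could be made rigorous. None of this changes your correct bottom line: the conjecture is open, and the barrier is exactly where you place it.
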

Conjecture \ref{main_conj} has applications to the pair correlation problem at hand. We will show that:
\begin{prop}
Conjecture \ref{main_conj} implies that the pair correlation for the fractional parts of $n^2\alpha$ is Poissonian for any $\alpha$ of type $3-\delta$ for any $\delta \in (0,1)$.
\end{prop}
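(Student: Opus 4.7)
The strategy is to reduce $R_2(x,N,\alpha)$ to a divisor-type count, to replace $\alpha$ by a Dirichlet approximation $p/q$ so that the metric condition becomes a congruence condition modulo $q$, and then to apply Conjecture \ref{main_conj}. With the substitution $a=m-n$, $b=m+n$ for ordered pairs $1\le n<m\le N$, we have $m^2-n^2=ab$, and accounting for the symmetry $(m,n)\leftrightarrow(n,m)$ gives
\begin{align*}
R_2(x,N,\alpha)=\frac{2}{N}\,\#\big\{(a,b): 1\le a<b,\ a\equiv b\pmod{2},\ a+b\le 2N,\ \|ab\alpha\|\le x/N\big\}.
\end{align*}

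Fix a small $\delta'\in(0,\delta/3)$ and set $Q=N^{2-\delta'}$. Dirichlet's theorem provides a reduced $p/q$ with $1\le q\le Q$ and $|\alpha-p/q|\le 1/(qQ)$, while the type-$3-\delta$ condition forces $q\gg Q^{1/(2-\delta)}$; consequently $qQ\gg N^{(2-\delta')(3-\delta)/(2-\delta)}\gg N^{3+\eta_0}$ for some $\eta_0=\eta_0(\delta,\delta')>0$. Since $ab\le N^2$ throughout the region, $|ab(\alpha-p/q)|=o(x/N)$, so one may replace $\|ab\alpha\|\le x/N$ by $\|ab\,p/q\|\le x/N$ at the cost of a thin boundary shell that will be absorbed below. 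Because $(p,q)=1$, the latter inequality is equivalent to $pab\equiv r\pmod q$ for some $|r|\le K:=\lfloor qx/N\rfloor$; the lower bound on $q$ then yields $K\gg N^{(\delta-\delta')/(2-\delta)}\ge N^\eta$ for some $\eta>0$, and $q\le N^{2-\delta'}$ provides the size hypothesis of Conjecture \ref{main_conj}.

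To count pairs $(a,b)$ meeting the congruence, dyadically decompose the region into boxes $(A,2A]\times(B,2B]$ with $A+B\le 2N$. Boxes with $A/B\notin[1/C,C]$ contain $O(N^2/C)$ pairs in total and can be discarded by letting $C\to\infty$ at the end. In each balanced box, Conjecture \ref{main_conj} applied with $\rho=p^{-1}\bmod q$ and $r\in[1,K]$, and once more with $\rho=-p^{-1}\bmod q$ and $r\in[-K,-1]$, produces $\sim 2K\cdot AB/q$ solutions (the case $r=0$, i.e.\ $q\mid ab$, being controlled crudely by $O(AB\tau(q)/q)$). Summing over the dyadic pieces and using that the total area of the region (with the parity restriction) is $\sim N^2/2$, the full count is
\begin{align*}
\sim\, 2K\cdot\tfrac{N^2/2}{q}\;=\;\frac{qx}{N}\cdot\frac{N^2}{q}\;=\;xN,
\end{align*}
whence $R_2(x,N,\alpha)\sim 2x$ for each fixed $x>0$.

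The principal obstacle is the simultaneous calibration of parameters: the approximation error demands $qQ\gg N^3$; the hypothesis $K\ge N^\eta$ of the conjecture demands $q\gg N^{1+\eta}$; and the hypothesis $q\le N^{2-\delta'}$ caps $q$ from above. These three constraints are jointly satisfiable precisely because, for $\alpha$ of type $3-\delta$ with $\delta>0$, the Dirichlet denominator must obey $q\gg Q^{1/(2-\delta)}$, a bound strong enough to push both $K$ and $qQ$ past their thresholds once $\delta'$ is chosen small enough in terms of $\delta$. The remaining technical points --- the boundary shell for the $\alpha\to p/q$ swap (width $O(N^{\delta'})$ in the residue variable, hence of relative size $N^{1+\delta'}/q\to 0$), the $r=0$ term, and the unbalanced dyadic boxes --- are each dispatched either trivially or by a further application of Conjecture \ref{main_conj} with mildly perturbed $K$.
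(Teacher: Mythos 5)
Your overall strategy --- factor $m^2-n^2=ab$, replace $\alpha$ by a rational approximation $p/q$ whose quality is controlled by the type hypothesis, and convert $\Vert ab\,p/q\Vert\le x/N$ into the congruence count of Conjecture \ref{main_conj} --- is the same as the paper's (Propositions \ref{arithmetic_form} and \ref{tauConj_implies_pc}), and your calibration of $q\gg Q^{1/(2-\delta)}$, $qQ\gg N^{3+\eta_0}$, $K\gg N^\eta$ is correct. However, there is a genuine gap in the geometric decomposition. You tile the triangle $\{1\le a<b,\ a+b\le 2N\}$ by \emph{dyadic} boxes $(A,2A]\times(B,2B]$ and discard the unbalanced ones on the grounds that they contain $O(N^2/C)$ lattice points. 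That bound counts \emph{all} lattice points in those boxes, not the ones satisfying the congruence; after the normalization by $N$ it contributes $O(N/C)$ to $R_2$, which does not tend to $0$ as $N\to\infty$ for fixed $C$. To discard these boxes you would need an upper bound of the strength $\sum_{|r|\le K}\sum_{m\equiv\rho r\,(q)}\tau_{M',N'}(m)\ll KM'N'/q$ for \emph{unbalanced} ranges $M'\ll N'$, and no such bound is available: Conjecture \ref{main_conj} requires $M\asymp N$, and the trivial bound for a single small $a$ (the residues $ab\bmod q$, $b\le 2N$, form a progression, and when $q>2N$ each admissible residue can be hit once) allows as many as $\min(2N,2K)$ solutions per $a$, which summed over $a\le N/C$ is far too large. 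This is exactly the obstruction the paper avoids by tiling the triangle with a uniform $2^k\times 2^k$ grid of congruent boxes $I(k,l_1)\times J(k,l_2)$, each of side $N/2^k$: by inclusion--exclusion every such box is expressed through counts $\tau_{M',N'}$ with $M',N'\asymp N$ (constants depending on the fixed $k$), the conjecture applies to every piece, and the boundary discrepancy is $O(2^k)$ boxes each contributing $\sim xN/4^k$, so one lets $N\to\infty$ first and $k\to\infty$ afterwards. Your argument needs this (or an equivalent) replacement for the dyadic decomposition.

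A second, smaller gap is the parity condition $2\mid a+b$, which you dispose of by asserting that the admissible region has half the lattice points. The conjecture counts $\tau_{M,N}(m)$ over $m\equiv\rho r\,(q)$ with no parity restriction on the factors, so one must actually decompose the parity-restricted count $\frakT_N$ according to $m$ odd, $m\equiv 2\,(4)$, $m\equiv 0\,(4)$, and then convert congruences such as $4l\equiv r\overline{p}\,(q)$ into congruences for $l$ modulo $q$, $q/2$ or $q/4$ depending on $q\bmod 4$ (the residue $r$ and the parity of $m$ are correlated when $q$ is even). The paper carries this out and the three contributions do sum to half of the unrestricted count, but this is a computation, not a symmetry, and it also forces the conjecture to be invoked for the auxiliary functions $\tau_{N,N/2}$ and $\tau^*_{N/2}$ and for the moduli $q/2$, $q/4$; your proof should include it.
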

This is an immediate consequence of Proposition \ref{tauConj_implies_pc}. As mentioned previously the pair correlation for the fractional parts of $n^2 \alpha$ is not Poissonian if $\alpha$ is not of type $3$. Conjecture \ref{main_conj} claims that $3-\delta$ is sufficient.

Conjecture \ref{main_conj} seems bold but natural. Indeed the conjecture provably holds if $q$ is smaller than $N^{1-\delta}$ (see Proposition \ref{triv_prop} below). However, it turns out that we need $q \ge N^{3/2+\delta}$ for our purpose. We can actually obtain partial results for larger $q$ as well based on a lattice point approach using the ideas of Heath-Brown \cite{heathbrownPP}. Before we can state the result we introduce some terminology. We say that a rational number $p/q$ is of type $(e,\calK)$ if
\begin{align*}
\left\vert \frac{p}{q}-\frac{u}{v}\right\vert \ge \frac{1}{\calK v^e}
\end{align*}
for any rational number $u/v$ with $u/v \ne p/q$. One easily checks that if $\alpha$ is an irrational number of type $e$ then there exists $\calK > 0$ such that the convergents will be of type $(e,\calK)$ from some step. 

Modifying the proof of (\ref{HB_result}) we prove the following ($\tau$ denotes the ordinary divisor function):
\begin{theorem}\label{lattice}
Let $K,M,N \ge 3$ with $M \asymp N$ and let $\gamma \in (0,1)$. Assume that
\begin{align}\label{lattice_tech_cond}
q^{1+\delta} \le \left( \frac{N^2}{K} \right)^{1/(1+\gamma)}
\end{align}
for some $\delta > 0$ and $KN/q \ge 1$. Then
\begin{align*}
\sum_{\vert r \vert \le K} \sum_{pm\equiv r (q)} \tau_{M,N}(m) = \frac{2KMN}{q} + O\biggl(N(KN/q)^{7/8}+\frac{N^2}{q}\biggl(\tau(q)^2(\log N)^3+\frac{K(\log \log N)^2}{(\log N)^{1/4}}\biggr)\biggr)
\end{align*}
uniformly in $M$, $K$, $p$ and $q$ for $p/q$ of type $(2+\gamma,\calK)$.
\end{theorem}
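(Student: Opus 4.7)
I would follow the lattice point strategy that Heath-Brown used to prove (\ref{HB_result}) in \cite{heathbrownPP}. The first step is to rewrite the sum as
\begin{align*}
\sum_{|r|\le K} \sum_{pm \equiv r(q)} \tau_{M,N}(m) = \sum_{a \le M} \sum_{b \le N} \#\{s \in \Z : |pab - qs| \le K\},
\end{align*}
so that for each fixed $a \le M$ the inner count is the number of lattice points $(b,s)\in\Z^2$ inside the parallelogram $P_a = \{(b,s): 1 \le b \le N,\ |pab - qs|\le K\}$, which has area $2KN/q$. The main term $2KMN/q$ then comes from approximating $\#(P_a \cap \Z^2)$ by this area for each $a$ and summing.

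For the analytic extraction of the main term, I would smooth the characteristic function of $[-K,K]$ by a Beurling--Selberg-type majorant/minorant and apply Poisson summation in $s$ to obtain
\begin{align*}
\#\{s : |pab - qs|\le K\} = \frac{2K}{q} + \sum_{h \neq 0} c_h\, e(hpab/q) + (\text{smoothing error}),
\end{align*}
with $c_h = (\pi h)^{-1}\sin(2\pi hK/q)$. The smoothing errors, boundary contributions from pairs $(a,b)$ where $pab \pmod q$ sits near $\pm K$, and a divisor-type decomposition indexed by $d \mid \gcd(h,q)$ should jointly give the $\tau(q)^2 N^2(\log N)^3/q$ term. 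For the oscillatory contribution, I would split the range of $h$ at $|h|\asymp q/K$, bound the inner $b$-sum by $\min(N, 1/(2\|hpa/q\|))$, and then estimate $\sum_{a \le M}\min(N,1/\|hpa/q\|)$ in terms of the denominators of the convergents to $hp/q$. The hypothesis (\ref{lattice_tech_cond}) combined with $p/q$ being of type $(2+\gamma,\calK)$ excludes dangerous rational approximations, and a dyadic summation in $|h|$ together with a Vinogradov-style saving should produce the $K N^2(\log\log N)^2/(q(\log N)^{1/4})$ term.

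The main obstacle is the sharp $N(KN/q)^{7/8}$ error, which cannot come from elementary Fourier-analytic bounds alone. Instead, for each $a$ I would invoke a lattice point estimate for parallelograms of small width and irrational slope $pa/q$ in the spirit of Huxley and Heath-Brown, giving $|\#(P_a \cap \Z^2) - 2KN/q| \ll (KN/q)^{7/8}$ whenever the slope is sufficiently irrational. The Diophantine property of $p/q$ and the constraint (\ref{lattice_tech_cond}) should ensure this for essentially all $a \le M$, and a trivial sum over $a$ then contributes the factor $M \asymp N$, producing exactly $N(KN/q)^{7/8}$. Checking that the required irrationality propagates uniformly from $p/q$ to $pa/q$ for all relevant $a$, and choosing the cutoffs in $h$ so that the three error contributions balance as stated, is the delicate technical heart of the proof.
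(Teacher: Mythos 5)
Your opening identity and the idea of counting lattice points in thin slabs of slope $pa/q$ match the paper's strategy (which follows Section 5 of \cite{heathbrownPP}), but the step you lean on for the dominant error term does not exist: there is no individual estimate $\vert \#(P_a\cap\Z^2)-2KN/q\vert \ll (KN/q)^{7/8}$ valid for each $a$. The slope $pa/q$ is \emph{rational} with denominator $q/(a,q)$, so the required irrationality does not propagate from $p/q$ to $pa/q$; when the associated unimodular lattice has a short vector $\lambda_1$ (for instance when $(a,q)$ is large) the count deviates from the area by a quantity of order $\sqrt{M\delta}/\lambda_1$, which can be as large as $M$, and no Huxley-type bound applies to a straight-edged region with rational slope. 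The actual argument treats the bad $a$'s collectively: one writes the per-$a$ error as $O(\sqrt{M\delta}/\lambda_1)+O(1)$, decomposes dyadically in $E\asymp\sqrt{M\delta}/\lambda_1$, and observes that the number of $a$'s contributing at level $E$ is controlled by another divisor-type count $V(M/E,N/F,K/(qEF),p/q)=\#\{(a,b,z)\mid a\le M/E,\ b\le N/F,\ (ab,z)=1,\ \vert ab\,p/q-z\vert\le K/(qEF)\}$. The exponent $7/8$ comes from Lemmas 6 and 7 of \cite{heathbrownPP} applied to this count in the range $EF\le(\log N)^{5/4}$; it is a statement on average over $a$, not a per-$a$ lattice point theorem, and this duality step (short lattice vectors $\leftrightarrow$ the counting function $V$) is the engine of the proof and is missing from your proposal.

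Your second paragraph also misattributes the remaining error terms: they do not arise from Beurling--Selberg smoothing or Poisson summation. In the paper the complementary range $EF\ge(\log N)^{5/4}$ is handled by noting that the type-$(2+\gamma,\calK)$ hypothesis forces $(ab)^{1+\gamma}\ge qEF\calK/K$ unless $ab\mid q$, which together with (\ref{lattice_tech_cond}) yields $MN/(EF)\gg q^{1+\delta(1+\gamma)/(2+\gamma)}$ and places $V$ in the regime where the Linnik--Vinogradov bound (\ref{lv_thm}) applies; the exceptional cases $ab\mid q$ give the $\tau(q)^2(\log N)^3$ term, and the dyadic sum over $EF\ge(\log N)^{5/4}$ produces the factor $(\log\log N)^2(\log N)^{-1/4}$. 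So the hypotheses (\ref{lattice_tech_cond}) and the Diophantine type of $p/q$ enter only to control this large-$EF$ regime, not to make individual slopes sufficiently irrational as your sketch assumes.
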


It is well known (see e.g. \cite{heathbrown79}) that one expects that
\begin{align}\label{tau_ap}
\sum_{\substack{n \le x\\ n\equiv r (q)}} \tau(n) \sim \frac{x}{q^2}\log x \sum_{d\mid(q,r)}\sum_{c\mid \frac{q}{d}}dc \mu\left(\frac{q}{dc}\right)
\end{align}
as $x \to \infty$ for $q \le x^{1-\delta}$ for some $\delta > 0$. Average results supporting this conjecture have been considered by Banks, Heath-Brown and Shparlinski \cite{banks05}, and Blomer \cite{blomer08}. If we adapt (\ref{tau_ap}) to $\tau_{M,N}$ we should expect that
\begin{align}\label{tauMN_ap}
\sum_{n\equiv r (q)} \tau_{M,N}(n) \sim \frac{MN}{q^2} \sum_{d\mid(q,r)}\sum_{c\mid \frac{q}{d}}dc \mu\left(\frac{q}{dc}\right)
\end{align}
for $M \asymp N$ and $q \le N^{2-\delta}$. Note that
\begin{align}\label{mu_phi_id}
\sum_{d\mid(q,r)}\sum_{c\mid \frac{q}{d}}dc \mu\left(\frac{q}{dc}\right) = \sum_{d\mid(q,r)} d \phi\left( \frac{q}{d}\right).
\end{align}
It has been proved by Linnik and Vinogradov \cite{linnik} that
\begin{align}\label{lv_thm}
\sum_{\substack{m \le x\\ m \equiv r(q)}} \tau(m) \ll \frac{\phi(q)x \log x}{q^2}
\end{align}
for $q \le x^{1-\delta}$ and $(r,q)=1$, where the constant implied depends on $\delta > 0$ only. In view of Conjecture \ref{main_conj} and (\ref{lv_thm}) it would be interesting to find upper bounds for
\begin{align*}
\sum_{m\equiv r (q)} \tau_N^*(m).
\end{align*}
Heath-Brown \cite{heathbrownPP} suggested the following conjecture which is the analogue of (\ref{lv_thm}) for $\tau_N^*$:
\begin{conj}\label{HB_conj}
Let $\delta \in (0,1)$. Then
\begin{align*}
\sum_{m\equiv  r (q)} \tau_N^*(m) \ll \frac{\phi(q)N^2}{q^2}
\end{align*}
uniformly for $(r,q) = 1$ and $q \le N^{2-\delta}$, where the constant implied depends only on $\delta$.
\end{conj}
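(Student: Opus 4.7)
The natural starting point is the observation that $(r,q)=1$ forces $(a,q)=(b,q)=1$ in every factorization $m=ab$ with $m\equiv r\pmod{q}$, so, writing $\bar a$ for the inverse of $a$ modulo $q$,
\begin{align*}
\sum_{m\equiv r(q)}\tau_N^*(m)=\sum_{\substack{a\le N\\(a,q)=1}}\#\left\{b\le N:b\equiv r\bar a\pmod{q}\right\}.
\end{align*}
In the easy range $q\le N^{1-\delta}$ the inner count equals $N/q+O(1)$, and summing yields $\varphi(q)N^2/q^2+O(\varphi(q)N/q)$, comfortably within the conjectured bound.

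For moderately large $q$ the plan is to apply Fourier completion to the inner indicator, rewriting it as
\begin{align*}
\frac{1}{q}\sum_{|h|\le q/2}\Bigl(\sum_{b=1}^N e(hb/q)\Bigr)e\left(\frac{-hr\bar a}{q}\right).
\end{align*}
The frequency $h=0$ supplies the expected main term $\varphi(q)N^2/q^2$, while the contributions from $h\ne 0$ reduce, after exchanging the order of summation, to incomplete Kloosterman sums $\sum_{a\le N,\,(a,q)=1} e(-hr\bar a/q)$. The Weil bound combined with Polya--Vinogradov completion bounds each such sum by $O(q^{1/2+\varepsilon})$, and weighting by $\min(N,q/|h|)$ and summing on $h$ produces a total error of order $q^{1/2+\varepsilon}$. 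This is absorbed into the main term whenever $q\le N^{4/3-\varepsilon'}$; note that a cruder route, bounding $\tau_N^*(m)\le\tau(m)$ and invoking the Linnik--Vinogradov estimate (\ref{lv_thm}) at $x=N^2$, already gives the weaker bound $\ll\varphi(q)N^2\log N/q^2$ for $q\le N^{2-\delta}$, so the real content of the conjecture is the removal of the logarithmic factor.

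The genuine obstacle is the range $N^{4/3}\ll q\le N^{2-\delta}$, where estimating each Kloosterman sum individually via Weil no longer suffices. The natural remedy is to seek cancellation jointly in $(a,h)$, appealing to the spectral theory of automorphic forms through the Kuznetsov trace formula and the Deshouillers--Iwaniec bounds for bilinear sums of Kloosterman sums. Pushing the estimate all the way to $q=N^{2-\delta}$ however demands essentially square-root cancellation in this bilinear form near the endpoint, which lies just beyond what the standard averaged bounds deliver; moreover the absence of an extra $\log N$ in the target forbids a Linnik--Vinogradov style application of (\ref{lv_thm}) to $\tau$ in place of $\tau_N^*$. This is precisely the gap that keeps Conjecture \ref{HB_conj} open, and an honest proof would likely require either a new spectral input or a Fouvry--Kowalski--Michel style algebraic rearrangement to recover the missing logarithmic saving built into the conjectured upper bound.
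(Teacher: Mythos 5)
The statement you were given is a conjecture (attributed to Heath-Brown), and the paper offers no proof of it; the text explicitly says ``Conjecture \ref{HB_conj} is probably hard to prove.'' Your proposal correctly recognizes this: you do not claim a proof, and your assessment of where the difficulty lies is sound. Your partial observations check out. The identity $\sum_{m\equiv r(q)}\tau_N^*(m)=\sum_{a\le N,\,(a,q)=1}\#\{b\le N: b\equiv r\bar a\ (q)\}$ is correct since $(r,q)=1$ forces $(ab,q)=1$; the range $q\le N^{1-\delta}$ is elementary (the paper states exactly this after Proposition \ref{triv_prop}); the reduction $\tau_N^*(m)\le\tau(m)$ with $m<N^2$ plus (\ref{lv_thm}) does give the conjecture up to a factor $\log N$ in the full range; and the Fourier-completion/Weil route does produce a log-free bound only up to roughly $q\ll N^{4/3-\varepsilon}$ (one small technical remark: after completing in $b$ the $a$-sum is a Ramanujan-type sum, and a second completion in $a$ is needed before genuine Kloosterman sums and the Weil bound enter, but this is standard and does not change your threshold).

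Where your route and the paper's diverge is in the choice of partial substitute for the conjecture. The paper's closest result is Proposition \ref{tenenbaum_result}, proved by comparing $\tau_N^*$ with the Hooley $\Delta$-function on dyadic blocks $N^2/2^k<m\le N^2/2^{k-1}$ (where $\tau_N^*(m)\le 2k\Delta(m)$), and then applying the Nair--Tenenbaum short-sum theorem together with Tenenbaum's bound $\sum_{n\le x}\Delta(n)/n\ll e^{\sqrt{(2+\varepsilon)(\log\log x)(\log\log\log x)}}\log x$. That argument covers the whole range $q\le N^{2-\delta}$ but loses a factor $e^{\sqrt{(2+\varepsilon)(\log\log N)(\log\log\log N)}}\cdot(q/\phi(q))^2$, i.e.\ less than $(\log N)^{\varepsilon}$; your Kloosterman approach is genuinely log-free but only in the restricted range $q\ll N^{4/3-\varepsilon}$. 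The two are complementary partial results, and neither closes the conjecture in the critical range $N^{4/3}\ll q\le N^{2-\delta}$, exactly as you say.
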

Using the work of M. Nair and G. Tenenbaum \cite{nair98} we prove an upper bound for the sum in Conjecture \ref{HB_conj}.
\begin{prop}\label{tenenbaum_result}
Let $q \le N^{2-\delta}$. Then
\begin{align*}
\sum_{m\equiv r (q)} \tau_N^*(m) \ll \frac{N^2}{\phi(q)}e^{\sqrt{(2+\varepsilon)(\log\log N)(\log\log\log N)}} 
\end{align*}
uniformly for $(r,q)=1$ for any $\epsilon > 0$. 
\end{prop}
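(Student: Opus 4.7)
The plan is to reformulate the sum as a count of integer pairs, then handle small and large moduli separately, reducing the difficult range to a divisor sum in an arithmetic progression which one attacks via the Nair--Tenenbaum theorem. Swapping the order of summation gives
\[
\sum_{m \equiv r (q)} \tau_N^*(m) = \#\{(a,b) \in \N^2 : a,b \le N,\ ab \equiv r \pmod q\},
\]
and the hypothesis $(r,q) = 1$ forces $(a,q) = (b,q) = 1$ for any admissible pair.

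For moduli $q \le N$ the argument is essentially trivial: for each of the $\phi_N(q)$ admissible values of $a$, the number of $b \le N$ with $b \equiv ra^{-1} \pmod q$ is $N/q + O(1)$, so the sum is at most $\phi_N(q)(N/q + O(1)) \ll N^2\phi(q)/q^2 \le N^2/\phi(q)$, the last inequality using $\phi(q) \le q$. For the interesting range $q > N$, I would first bound $\tau_N^*(m) \le \tau(m)$ for $m \le N^2$ (and use $\tau_N^*(m) = 0$ for $m > N^2$) to reduce to estimating $\sum_{m \le N^2,\ m \equiv r (q)} \tau(m)$, and then invoke the Nair--Tenenbaum theorem \cite{nair98} on sums of non-negative multiplicative functions in arithmetic progressions. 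Their theorem yields an upper bound of shape $\frac{N^2}{\phi(q)}$ multiplied by the Euler product $\prod_{p \le N^2,\ p \nmid q}(1 + (\tau(p)-1)/p)$; producing the factor $\exp\sqrt{(2+\varepsilon)(\log\log N)(\log\log\log N)}$ in place of the naive $\log N$ then requires exploiting both the restriction $(m,q) = 1$ and the fact that the divisors of $m$ counted by $\tau_N^*$ are forced into the window $[m/N, N]$.

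The principal obstacle is precisely this saving of a $\log N$ factor. A direct application of Nair--Tenenbaum (or Shiu or Linnik--Vinogradov) gives only $\frac{N^2\log N}{\phi(q)}$, and improving this to $\frac{N^2}{\phi(q)}\exp\sqrt{(2+\varepsilon)(\log\log N)(\log\log\log N)}$ requires using that for $m$ close to $N^2$ the window $[m/N, N]$ is short, so that $\tau_N^*(m)$ is typically substantially smaller than $\tau(m)$. Translating this window restriction into a corresponding saving in the Nair--Tenenbaum Euler-product estimate, uniformly in $r$ and in $q \le N^{2-\delta}$, is the central technical point of the proof; the constant $2+\varepsilon$ in the exponent reflects the asymptotic $\sum_{p \le N^2}(\tau(p) - 1)/p = 2\log\log N + O(1)$.
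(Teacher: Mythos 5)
Your reduction to counting pairs and your treatment of small $q$ are fine, but for the main range your proposal stops exactly where the proof has to begin: you correctly observe that bounding $\tau_N^*(m)\le\tau(m)$ and applying Nair--Tenenbaum (or Shiu, or Linnik--Vinogradov) only gives $N^2\log N/\phi(q)$, and you then declare that ``translating the window restriction into a corresponding saving'' is the central technical point --- without saying how to do it. That missing step is the entire content of the proposition. The paper's mechanism is a comparison with the Hooley $\Delta$-function $\Delta(n)=\max_u\#\{d\mid n: e^u<d\le e^{u+1}\}$: for $m$ in the dyadic range $N^2/2^{k}<m\le N^2/2^{k-1}$ the admissible divisors lie in the multiplicative window $[m/N,N]$ of logarithmic length at most $k\log 2$, whence $\tau_N^*(m)\le 2k\,\Delta(m)$. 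One then applies Nair--Tenenbaum to $F=\Delta$ (which lies in $\M(2,B,\epsilon)$) on each dyadic block, feeds in Tenenbaum's estimate $\sum_{n\le x}\Delta(n)/n\ll e^{\sqrt{(2+\varepsilon)(\log\log x)(\log\log\log x)}}\log x$, and sums $\sum_k k/2^k<\infty$; the remaining block $m\le N^2/2^l$ (chosen so that $N^2/2^l\asymp q^{1+\delta}$, which is where the hypothesis $q\le N^{2-\delta}$ is used) is handled by $\tau_N^*\le\tau$ and Linnik--Vinogradov.

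A secondary but real error: you attribute the factor $e^{\sqrt{(2+\varepsilon)(\log\log N)(\log\log\log N)}}$ to the Euler product $\prod_{p\le N^2}(1+(\tau(p)-1)/p)$, i.e.\ to $\sum_{p\le N^2}1/p\sim 2\log\log N$. That product is $\asymp\log N$ and would simply reproduce the loss you are trying to avoid. The sub-logarithmic factor comes from a genuinely different and much deeper source, namely Tenenbaum's bound on the mean of the $\Delta$-function; without invoking that result (or an equivalent), no amount of massaging the Euler product for $\tau$ will produce the stated bound.
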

Note that the estimate in the proposition above is off by less than a factor of $(\log N)^\epsilon$ compared to Conjecture \ref{HB_conj} since $q/\phi(q) \ll \log\log q$.

The function $\tau_{M,N}$ is complicated. There is another similar function of interest
\begin{align*}
\tau_M(m) = \# \{ d \in \N \mid  d \le M, \ d \mid m\}.
\end{align*}
The function $\tau_M$ is in many ways simpler than $\tau_{M,N}$. The estimate corresponding to Conjecture \ref{HB_conj} holds. More precisely we prove:
\begin{theorem}\label{tau_N_thm}
Let $0 < \delta \le 1$, $0 < \epsilon < \frac{1}{8}$, $0 < \kappa$ and $2 \le N$. Assume also that $N \ge q^{\kappa}$. Then
\begin{align*}
\sum_{\substack{x < n \le x + y\\ n\equiv r (q)}} \tau_N(n) \ll \frac{y\phi(q)\log N}{q^2}
\end{align*}
uniformly for $N$, $(r,q) = 1$, $x^{\frac{1+4 \epsilon\delta}{1+\delta}} \le y \le x$, $x \ge c_0 q^{1+\delta}$, where $c_0$ and the constant implied depends at most on $\delta$, $\epsilon$ and $\kappa$. 
In particular
\begin{align*}
\sum_{\substack{m\equiv r (q)\\ m \le x}} \tau_N (m) \ll \frac{\phi(q)x \log N}{q^2} .
\end{align*}
\end{theorem}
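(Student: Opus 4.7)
\medskip

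\noindent\textbf{Proof plan.} I split into two regimes according to the size of $N$. In both, I swap summations, using that $(r,q)=1$ forces $(d,q)=1$ for any $d\mid n$, so
\[
S := \sum_{\substack{x<n\le x+y\\ n\equiv r(q)}}\tau_N(n) = \sum_{\substack{d\le N\\ (d,q)=1}}\left(\frac{y}{dq}+\theta_d\right),\qquad |\theta_d|\le 1.
\]
A M\"obius--Mertens evaluation of $\sum_{d\le N,\,(d,q)=1}1/d=(\phi(q)/q)\log N+O(\phi(q)\log q/q)$ shows the main term equals $y\phi(q)\log N/q^2$ up to a factor $1+O(1/\kappa)$ (using $\log N\ge \kappa\log q$), while the $\theta_d$-contribution is $\ll \phi(q)N/q$.

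\emph{Case A (small $N$: $N\le x^{4\epsilon\delta/(1+\delta)}$).} Combining $q\le (x/c_0)^{1/(1+\delta)}$ and $y\ge x^{(1+4\epsilon\delta)/(1+\delta)}$,
\[
\frac{\phi(q)N}{q}\le \frac{\phi(q)\,x^{4\epsilon\delta/(1+\delta)}}{q}\le \frac{\phi(q)\,y}{q^{2}\,c_0^{1/(1+\delta)}}.
\]
Choosing $c_0$ large enough in terms of $\delta$ so that $c_0^{1/(1+\delta)}\log 2\ge 1$, and using $N\ge 2$, this error is absorbed into $\phi(q)y\log N/q^2$, which completes the trivial approach.

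\emph{Case B (large $N$: $N>x^{4\epsilon\delta/(1+\delta)}$).} Then $\log x<\frac{1+\delta}{4\epsilon\delta}\log N$, and I use $\tau_N(n)\le \tau(n)$ together with Shiu's short-interval bound for the multiplicative function $\tau$:
\[
\sum_{\substack{x<n\le x+y\\ n\equiv r(q)}}\tau(n)\ll \frac{y}{\phi(q)\log x}\exp\!\left(\sum_{\substack{p\le x\\ p\nmid q}}\frac{2}{p}\right)\asymp \frac{\phi(q)\,y\log x}{q^{2}},
\]
where the last step is the Mertens-type identity $\exp(2\sum_{p\le x,\,p\nmid q}1/p)\asymp(\log x)^{2}(\phi(q)/q)^{2}$, which converts Shiu's $1/\phi(q)$ into the sharp $\phi(q)/q^{2}$. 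Shiu's hypotheses hold with $\alpha=(1+4\epsilon\delta)/(1+\delta)$ and $\beta=4\epsilon\delta/(1+4\epsilon\delta)$ both positive (so that $y\ge x^{\alpha}$ and $q\le y^{1-\beta}$), and inserting $\log x\ll_{\delta,\epsilon}\log N$ then yields the bound.

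The \emph{main technical input} is Shiu's inequality paired with the Mertens computation producing the $\phi(q)/q^{2}$ factor; the threshold $x^{4\epsilon\delta/(1+\delta)}$ is forced by balancing $\phi(q)N/q$ against $\phi(q)y\log N/q^{2}$. The ``In particular'' statement follows by dyadic decomposition $(0,x]=\bigcup_{k\ge 0}(x/2^{k+1},x/2^{k}]$, applying the main assertion on each block with $x/2^{k}\ge c_{0}q^{1+\delta}$, summing the geometric series, and bounding the residual initial segment via Linnik--Vinogradov and $\tau_{N}\le \tau$.
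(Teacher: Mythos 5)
Your proof is correct, but it takes a genuinely different route from the paper's. The paper's argument is a one-line deduction from heavier machinery: since $\tau_N$ (which is \emph{not} multiplicative) lies in the class $\M(2,B,\epsilon)$, the Nair--Tenenbaum bound (Theorem \ref{nair_tenenbaum}) applies to it directly, and combining with Lemma \ref{tauN_lemma}, which gives $\sum_{n\le x,(n,q)=1}\tau_N(n)/n\ll \phi(q)^2(\log x)(\log N)/q^2$, the factors of $\log x$ cancel with no case distinction. You instead sidestep the non-multiplicativity entirely: for $N\le x^{4\epsilon\delta/(1+\delta)}$ the elementary divisor swap works because the threshold is calibrated exactly so that $Nq\ll y$ (this is precisely where the hypothesis $y\ge x^{(1+4\epsilon\delta)/(1+\delta)}$ enters, which your argument makes transparent), while for larger $N$ the crude bound $\tau_N\le\tau$ costs only the bounded factor $\log x/\log N\ll_{\epsilon,\delta}1$, after which Shiu's theorem for the genuinely multiplicative $\tau$ (the reference \cite{shiu} already cited in the paper) together with the Mertens identity $\exp\bigl(2\sum_{p\le x,\,p\nmid q}1/p\bigr)\asymp(\log x)^2(\phi(q)/q)^2$ finishes; your verification that $q\le y^{1-\alpha'}$ with $\alpha'=4\epsilon\delta/(1+4\epsilon\delta)\in(0,1/2)$ is the right check. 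Both proofs use $N\ge q^{\kappa}$ for the same purpose, namely to dominate the $O((\log\log q)^2)$ and $O(\tau(q)/N)$ errors in $\sum_{d\le N,(d,q)=1}1/d$ by $\phi(q)(\log N)/q$ (your stated error $O(\phi(q)\log q/q)$ is not literally what the M\"obius computation produces, but it is a valid majorant of those two terms, so this is harmless). The trade-off is that the paper's proof is shorter given Theorem \ref{nair_tenenbaum}, whereas yours rests on older, more elementary inputs at the price of a case split; your dyadic treatment of the ``in particular'' clause, with Linnik--Vinogradov and $N\ge q^\kappa$ handling the initial segment $m\ll q^{1+\delta}$, is also sound.
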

Note that with $N = x+y$ we obtain
\begin{align*}
\sum_{\substack{m\equiv r (q)\\ x < m \le x+y}} \tau (m) \ll \frac{y\phi(q) \log x}{q^2}.
\end{align*}
This extension of (\ref{lv_thm}) was also obtained by P. Shiu \cite{shiu}.

Finally we show that Conjecture \ref{main_conj} and Conjecture \ref{HB_conj} hold on average. Indeed we start by proving that (\ref{tauMN_ap}) holds for most values of $q$ and $r$ if $(q,r)$ is small:
\begin{theorem}\label{tau_individ}
Let $\delta > 0$ and assume $M \asymp N$. Then
\begin{align*}
\sum_{(r,q) = k}\left(\sum_{m\equiv  r (q)} \tau_{M,N}(m) - \frac{MN}{q^2}\sum_{d \mid k}\sum_{c \mid \frac{q}{d}} dc \mu\left(\frac{q}{dc} \right) \right)^2 \ll \frac{N^{\max(7/2,4-\delta)+\epsilon}}{q}
\end{align*}
uniformly for $q \le N^{2-\delta}$ and $k \mid q$.
\end{theorem}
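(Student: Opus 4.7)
Set $f(r) := \#\{(a,b) : a \le M,\ b \le N,\ ab \equiv r \pmod q\}$ and $M_0(k) := \frac{MN}{q^2}\sum_{d \mid k} d\phi(q/d)$; by (\ref{mu_phi_id}), $M_0(k)$ coincides with the main term in the statement and is constant on the level set $\{r : (r,q) = k\}$. Expanding the square,
\begin{align*}
V(k) := \sum_{(r,q)=k}(f(r)-M_0(k))^2 = A(k) - 2M_0(k)B(k) + M_0(k)^2\phi(q/k),
\end{align*}
with $A(k) := \sum_{(r,q)=k} f(r)^2$ and $B(k) := \sum_{(r,q)=k} f(r) = \#\{a\le M, b\le N : (ab,q)=k\}$. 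I aim to evaluate $A(k)$ and $B(k)$ precisely enough that the resulting main terms cancel against $M_0(k)^2\phi(q/k)$, leaving an error of the required size.

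For $B(k)$, M\"obius inversion on $(ab/k, q/k) = 1$ reduces to classical hyperbola-method counts $\#\{a\le M, b \le N : D\mid ab\}$ for $D = kd$ with $d\mid q/k$, each evaluating to its natural main term with error $O(\tau(D)N^{1+\epsilon})$; this yields $B(k) = M_0(k)\phi(q/k) + O(N^{1+\epsilon})$, contributing at most $\ll M_0(k) N^{1+\epsilon} \ll N^{3+\epsilon}/q$ to $V(k)$. For $A(k)$, unfolding the squared count gives a sum over quadruples $(a_1, b_1, a_2, b_2)$ with $a_1 b_1 \equiv a_2 b_2 \pmod q$ and $(a_1 b_1, q) = k$. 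Splitting by the integer shift $n := (a_1 b_1 - a_2 b_2)/q$, the diagonal $n = 0$ contributes $\sum_{(u,q)=k}\tau_{M,N}(u)^2 \ll MN(\log N)^3 \ll N^{2+\epsilon}$ via the standard divisor second-moment bound. For the off-diagonal, I apply the shifted-convolution asymptotic
\begin{align*}
\sum_{v \le V}\tau(v)\tau(v+h) = V\cdot P_h(\log V) + O(V^{3/4 + \epsilon}),
\end{align*}
valid uniformly in $h\ne 0$, with $V = MN\asymp N^2$ and $h = nq$ summed over $0 < |n| \le MN/q$. This produces an explicit main term plus an error $\ll (N^2/q)\cdot N^{3/2+\epsilon} = N^{7/2+\epsilon}/q$.

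The extracted main term of $A(k)$, namely $V\sum_n P_{nq}(\log V)$, must be matched against $M_0(k)^2\phi(q/k) + 2M_0(k)(B(k)-M_0(k)\phi(q/k))$ using the Ramanujan-sum identity (\ref{mu_phi_id}). After this cancellation, $V(k)\ll N^{7/2+\epsilon}/q + N^{2+\epsilon}$; and since $N^{2+\epsilon}\le N^{4-\delta+\epsilon}/q$ whenever $q\le N^{2-\delta}$, the stated bound follows. The main obstacle is this exact matching of arithmetic main terms, which requires a multiplicative-function computation relating $P_h$ (a quadratic in $\log V$ whose coefficients involve divisor functions of $h$) to $E(q,k) = \sum_{d\mid k} d\phi(q/d)$ through Ramanujan sums. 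A cleaner alternative bypasses the shifted-convolution formulation by applying additive-character orthogonality directly: $A(k) = q^{-1}\sum_h |T_k(h)|^2$ with $T_k(h) = \sum_{(ab,q)=k,\, a\le M,\, b\le N} e(hab/q)$, and the variance becomes $q^{-1}\sum_h |T_k(h)-\widetilde T_k(h)|^2$ where $\widetilde T_k(h)$ is the complete-sum analog; this expression can then be bounded via standard estimates for $\sum_a \min(N, \|ha/q\|^{-1})$, with the main-term identification absorbed into the complete-sum approximation.
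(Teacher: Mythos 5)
Your overall variance decomposition and the treatment of $B(k)$ match the paper's bookkeeping, but the heart of the theorem is the evaluation of $A(k)=\sum_{(r,q)=k}f(r)^2$, and neither of your two routes to it closes. The shifted-convolution route requires an asymptotic for $\sum_{v\le V}\tau(v)\tau(v+nq)$ with error $O(V^{3/4+\epsilon})$ \emph{uniformly} for shifts $h=nq$ as large as $V\asymp N^2$; no such uniformity is available off the shelf (the known uniform error terms in the binary additive divisor problem degrade well before $h\asymp V$), and in any case the quantity you actually need is the correlation of the \emph{restricted} divisor function $\tau_{M,N}$, further constrained by $(a_1b_1,q)=k$, not of $\tau$ itself. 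On top of that, the step you yourself flag as ``the main obstacle'' --- matching $V\sum_n P_{nq}(\log V)$ against $M_0(k)^2\phi(q/k)$ via the singular series --- is precisely where the arithmetic content lies, and it is not carried out.

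The ``cleaner alternative'' fails quantitatively. Writing $A(k)$ via orthogonality and bounding $|T_k(h)|\le\sum_{a\le M}\min(N,\|ha/q\|^{-1})$ gives, after summing over $h\ne 0$ and dividing by $q$, a diagonal contribution $\ll MN$ but an off-diagonal contribution of size $\ll M^2N\asymp N^3$, which exceeds the target $N^{\max(7/2,4-\delta)+\epsilon}/q$ as soon as $q\ge N^{1/2}$ (and the intended application needs $q\ge N^{3/2}$). The missing idea is the one the paper builds Lemmas \ref{error_lemma_i} and \ref{error_lemma_ii} around: switch the modulus. Writing $a_1b_1-a_2b_2=cq$ and counting solutions of $x a_1\equiv cq/k\ (a_2)$ turns the congruence modulo the large $q$ into one modulo the \emph{small} modulus $a_2\le N$, where Lemma \ref{friedlander} applies and the resulting incomplete Kloosterman sums can be estimated with the Weil bound. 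That source of cancellation, absent from both of your variants, is what produces the $N^{7/2+\epsilon}/q$ error term.
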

From Theorem \ref{tau_individ} we can deduce the following:
\begin{theorem}\label{main_conj_sup}
Let $M, N \ge 1$ with $M \asymp N$, $q \in \N$ and $K \ge N^{\eta}$ for some $\eta > 0$. Assume also that $q \le N^{2 -\delta}$ for some $\delta > 0$. Then
\begin{align*}
\sum_{(\rho, q) = 1} \left(\sum_{r \le K} \sum_{m\equiv \rho r(q)} \tau_{M,N}(m) - \frac{KMN}{q}\right)^2\ll \frac{K^2 N^4}{q}\left(q^{\varepsilon}\left(\frac{1}{q}+\frac{1}{K}\right)^2+N^{\max(-1/2,-\delta)+\epsilon}\right)
\end{align*}
for any $\epsilon > 0$.
\end{theorem}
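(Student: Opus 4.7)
The plan is to combine Cauchy--Schwarz with the second-moment bound supplied by Theorem~\ref{tau_individ}. Writing
\begin{align*}
E(r')=\sum_{m\equiv r'\,(q)}\tau_{M,N}(m)-\frac{MN}{q^{2}}\sum_{d\mid(r',q)} d\,\phi(q/d)
\end{align*}
for the individual discrepancy (identity~(\ref{mu_phi_id}) identifies this with the sum appearing in Theorem~\ref{tau_individ}), and setting $S(\rho)=\sum_{r\le K}\sum_{m\equiv\rho r\,(q)}\tau_{M,N}(m)$, one observes that $(\rho,q)=1$ forces $(\rho r,q)=(r,q)$. The identity $\sum_{e\mid q}\phi(q/e)=q$ then collapses the sum of main terms over $r\le K$ to $\frac{MN}{q^{2}}\sum_{e\mid q}e\phi(q/e)\lfloor K/e\rfloor=\frac{KMN}{q}+O\bigl(MN\tau(q)/q\bigr)$, so
\begin{align*}
S(\rho)-\frac{KMN}{q}=\sum_{r\le K} E(\rho r)+O\!\left(\frac{MN\tau(q)}{q}\right).
\end{align*}

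I would then square, sum over $\rho\in(\Z/q\Z)^{*}$, and apply Cauchy--Schwarz in the form $\bigl(\sum_{r\le K}E(\rho r)\bigr)^{2}\le K\sum_{r\le K}E(\rho r)^{2}$. The crucial input is the orbit structure of $\rho\mapsto\rho r\pmod{q}$: for fixed $r$ with $d=(r,q)$, as $\rho$ ranges over units mod $q$ the class $\rho r\pmod q$ sweeps through precisely the residues $r'\pmod q$ with $(r',q)=d$, hitting each $\phi(q)/\phi(q/d)$ times, because the reduction $(\Z/q\Z)^{*}\to(\Z/(q/d)\Z)^{*}$ is surjective with fibres of that size and $\rho r\pmod q$ depends only on $\rho\pmod{q/d}$. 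Applying Theorem~\ref{tau_individ} with $k=d$ then yields
\begin{align*}
\sum_{(\rho,q)=1} E(\rho r)^{2}=\frac{\phi(q)}{\phi(q/d)}\sum_{(r',q)=d} E(r')^{2}\ll\frac{\phi(q)}{\phi(q/d)}\cdot\frac{N^{\max(7/2,4-\delta)+\epsilon}}{q}.
\end{align*}

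The final step is to sum over $r\le K$ and collect error terms. The standard count $\#\{r\le K:(r,q)=d\}\ll K\phi(q/d)/q+2^{\omega(q/d)}$ together with $q/\phi(q)\ll\log\log q$ gives $\sum_{r\le K}\phi(q)/\phi(q/(r,q))\ll Kq^{\epsilon}$, while the main-term error contributes $\phi(q)\bigl(MN\tau(q)/q\bigr)^{2}\ll N^{4+\epsilon}/q$. Collecting and absorbing $q^{\epsilon}\ll N^{\epsilon}$ (using $q\le N^{2-\delta}$) leads to
\begin{align*}
\sum_{(\rho,q)=1}\!\left(S(\rho)-\frac{KMN}{q}\right)^{2}\ll\frac{K^{2}N^{\max(7/2,4-\delta)+\epsilon}}{q}+\frac{N^{4+\epsilon}}{q}.
\end{align*}
Since $\max(7/2,4-\delta)=4+\max(-1/2,-\delta)$, the first summand is exactly the second term of the target bound, and the second is dominated by the $1/K^{2}$ contribution to $K^{2}N^{4}q^{\epsilon}(1/q+1/K)^{2}/q$, so the estimate matches what is claimed. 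The only real obstacle is tracking the orbit structure of multiplication by $\rho$ carefully enough to match Theorem~\ref{tau_individ}'s indexing by $(r,q)$; no further analytic input is needed.
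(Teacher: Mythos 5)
Your proposal is correct and follows essentially the same route as the paper: both decompose the discrepancy into the sum of the individual discrepancies $E(\rho r)$ plus the error from replacing $\frac{MN}{q^2}\sum_{r\le K}\sum_{d\mid(r,q)}d\phi(q/d)$ by $KMN/q$, then apply Cauchy--Schwarz and Theorem~\ref{tau_individ} using exactly the orbit identity $\sum_{(\rho,q)=1}E(\rho r)^2=\frac{\phi(q)}{\phi(q/(r,q))}\sum_{(r',q)=(r,q)}E(r')^2$. The only (harmless) difference is that you absorb the main-term error via $(X+Y)^2\ll X^2+Y^2$ where the paper expands the square and bounds the cross term through its first-moment computation (\ref{mean}), which is why the paper's stated bound also carries the $1/q^2$ piece that your slightly sharper accounting does not need.
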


In Proposition \ref{triv_prop} we show that Conjecture \ref{main_conj} holds for $q \le N^{1-\delta}$. Thus we can safely restrict our attention to the case where $q \ge \sqrt{N}$. We have the following corollary, which states that Conjecture \ref{main_conj} is true on average:
\begin{cor}
Let $M, N \ge 1$ with $M \asymp N$, $q \in \N$ and $K \ge N^{\eta}$ for some $\eta > 0$. Assume also that $\sqrt{N} \le q \le N^{2 -\delta}$ for some $\delta > 0$. Then
\begin{align*}
\frac{1}{\phi(q)}\sum_{(\rho, q) = 1} \left(\frac{q}{KMN}\sum_{r \le K} \sum_{m\equiv \rho r(q)} \tau_{M,N}(m) - 1\right)^2\ll N^{-\min(1/2,\delta,2\eta)+\epsilon}
\end{align*}
for any $\epsilon > 0$.
\end{cor}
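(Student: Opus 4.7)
The plan is to deduce this corollary directly from Theorem~\ref{main_conj_sup} by a simple rescaling, using the hypotheses $q \ge \sqrt{N}$ and $K \ge N^{\eta}$ to control the remaining terms.

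First, I would bring the factor $q/(KMN)$ inside the square on the left-hand side of the claim; this scales each summand by $(q/(KMN))^2 \asymp q^2/(K^2N^4)$, since $M \asymp N$. Dividing the conclusion of Theorem~\ref{main_conj_sup} by $\phi(q)\,(KMN/q)^2$ therefore produces
\begin{align*}
\frac{1}{\phi(q)}\sum_{(\rho,q)=1}\left(\frac{q}{KMN}\sum_{r\le K}\sum_{m\equiv \rho r(q)}\tau_{M,N}(m)-1\right)^2 \ll \frac{q}{\phi(q)}\biggl(q^{\epsilon}\Bigl(\frac{1}{q}+\frac{1}{K}\Bigr)^2+N^{\max(-1/2,-\delta)+\epsilon}\biggr).
\end{align*}

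Second, since $q \le N^{2-\delta}$, the classical bound $q/\phi(q) \ll \log\log q \ll q^{\epsilon} \ll N^{\epsilon}$ lets me absorb the outer factor into an $N^{\epsilon}$ loss (at the cost of enlarging $\epsilon$). Expanding $(1/q+1/K)^2 \ll 1/q^2 + 1/K^2$ and invoking $q \ge \sqrt{N}$ and $K \ge N^{\eta}$ gives $1/q^2 \le N^{-1}$ and $1/K^2 \le N^{-2\eta}$. The right-hand side above is therefore bounded by
\begin{align*}
N^{\epsilon}\bigl(N^{-1}+N^{-2\eta}+N^{-\min(1/2,\delta)}\bigr)\ll N^{-\min(1/2,\delta,2\eta)+\epsilon},
\end{align*}
which is the desired estimate.

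I do not anticipate any real obstacle here, as the corollary is essentially a normalised restatement of Theorem~\ref{main_conj_sup}. The conceptual role of the hypothesis $q \ge \sqrt{N}$ is precisely to ensure that the ``diagonal-type'' contribution $1/q^2$ appearing inside the square in the theorem is already dominated by $N^{-\min(1/2,\delta)}$; without it one would be left with a term of size $q^{\epsilon}/q$ that need not tend to zero when $q$ is small.
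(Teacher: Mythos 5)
Your proof is correct and is exactly the intended deduction: the paper states this corollary without proof as an immediate consequence of Theorem~\ref{main_conj_sup}, obtained by dividing through by $\phi(q)(KMN/q)^2 \asymp \phi(q)K^2N^4/q^2$ and using $q/\phi(q) \ll q^{\epsilon}$, $q \ge \sqrt{N}$ and $K \ge N^{\eta}$, just as you do. (Only a trivial slip in your closing remark: the problematic term for small $q$ is $q^{\epsilon}/q^2$ rather than $q^{\epsilon}/q$, which does not affect the argument.)
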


The author would like to thank P. Sarnak for suggesting the problem of relating Conjecture \ref{rs_conj} to a divisor problem and D. R. Heath-Brown for generously sharing his ideas on the problem and providing crucial assistance at various stages. The author would also like to thank M. Risager and A. Str\"{o}mbergsson for comments on an earlier version of the manuscript.

\section{Reducing the Question to an Arithmetic Problem}
Set
\begin{align*}
S(x,N,\alpha) = \frac{\#\left\{(a,b) \in \N \times \Z \mid 1 \le a < 2N, \ 1 \le \vert b\vert \le N-\vert N-a \vert, \ 2 \mid a+b, \ \Vert ab\alpha \Vert \le \frac{x}{N}\right\}}{N}
\end{align*}
By factoring $m^2-n^2$ into $a = m+n$ and $b = m-n$ we see that
\begin{align*}
0 &\le S(x,N,\alpha) -R_2(x,N,\alpha)\\
& \le \frac{2}{N}+\frac{2}{N}\#\left\{n \in\N \mid n \le N, \ \Vert n^2 \alpha \Vert \le \frac{x}{N} \right\}\\
&\to 0
\end{align*}
as $N \to \infty$ (the difference between $S$ and $R_2$ is that in $S$ we do not exclude all the cases corresponding to $m$ or $n$ equal to $0$). This follows since the fractional parts of $n^2 \alpha$ becomes equidistributed in the unit interval. Thus if we want to study Poissonian behavior we may as well study $S(x,N,\alpha)$ rather than $R_2(x,N,\alpha)$.

From the elementary theory of continued fractions (see \cite{miller06} Chapter 7) we know that the convergents $p_n/q_n$ of $\alpha$ satisfy
\begin{align}
\left\vert \alpha -\frac{p_n}{q_n }\right\vert \le \frac{1}{q_n q_{n+1}}
\end{align}
and
\begin{align}\label{cond_ratio}
q_{n+1}p_n-p_{n+1}q_n = \pm 1.
\end{align}
Define $\frakR(y,N,p, q)$ by
\begin{align*}
\#&\left\{(a,b) \in \N \times \Z \mid 1 \le a < 2N, \ 1 \le \vert b\vert \le N-\vert N-a \vert, \ 2 \mid a+b, \ \Vert abp \Vert_{q} \le y \right\}.
\end{align*}
We have the following:
\begin{prop}\label{arithmetic_form}
Let $\alpha$ be irrational with convergents $p_n/q_n$. The pair correlation for the fractional parts of $n^2 \alpha$ is Poissonian if and only if there for all fixed $x > 0$ exists $\kappa > 0$ and a sequence $\{ n_k\}_1^\infty$ such that $N^{3+\kappa} \ll q_{n_N} q_{n_N+1}$ and
\begin{align*}
\frakR\left(\frac{xq_{n_N}}{N},N,p_{n_N},q_{n_N}\right) \sim 2xN
\end{align*}
as $N \to \infty$.
\end{prop}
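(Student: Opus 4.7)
The plan is to approximate $\|ab\alpha\|$ by $q_{n_N}^{-1}\|abp_{n_N}\|_{q_{n_N}}$ using the convergent bound, and then convert the small approximation error into a harmless perturbation of the cutoff $x$ via monotonicity of the counting functions.

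By the comparison already derived in this section, $R_2(x,N,\alpha)\to 2x$ iff $S(x,N,\alpha)\to 2x$, so we work with $S$. Any $(a,b)$ counted by $S$ has $|a|<2N$ and $|b|<N$, so $|ab|<2N^2$. Combining $|\alpha-p_n/q_n|\le (q_nq_{n+1})^{-1}$ with the identity $\|abp_n/q_n\|=q_n^{-1}\|abp_n\|_{q_n}$, we obtain
\[
\bigl|\,\|ab\alpha\|-q_n^{-1}\|abp_n\|_{q_n}\,\bigr|\le \frac{2N^2}{q_nq_{n+1}}.
\]
Setting $\delta_N:=2N^3/(q_{n_N}q_{n_N+1})$, this yields the sandwich
\[
\frakR\!\bigl((x-\delta_N)q_{n_N}/N,N,p_{n_N},q_{n_N}\bigr)\le N\cdot S(x,N,\alpha)\le \frakR\!\bigl((x+\delta_N)q_{n_N}/N,N,p_{n_N},q_{n_N}\bigr),
\]
and, via the substitution $x\mapsto x\pm\delta_N$, equivalently
\[
N\cdot S(x-\delta_N,N,\alpha)\le \frakR\!\bigl(xq_{n_N}/N,N,p_{n_N},q_{n_N}\bigr)\le N\cdot S(x+\delta_N,N,\alpha).
\]
For any fixed $\kappa>0$ choose $n_N$ to be the least index with $q_{n_N}q_{n_N+1}\ge N^{3+\kappa}$; such an index exists since $q_n\to\infty$, and then $\delta_N\le 2N^{-\kappa}\to 0$.

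For the ``only if'' direction assume $S(x,N,\alpha)\to 2x$ for every $x>0$. Since $S(\cdot,N,\alpha)$ is non-decreasing and the limit $2x$ is continuous, a standard monotonicity argument (Polya's theorem) gives uniform convergence on compact subintervals of $(0,\infty)$, so $S(x\pm\delta_N,N,\alpha)\to 2x$; the second sandwich then forces $\frakR(xq_{n_N}/N,\ldots)\sim 2xN$. For the ``if'' direction assume the $\frakR$ asymptotic; fix $\epsilon>0$, so $\delta_N<\epsilon$ for large $N$. Monotonicity of $\frakR$ in its first argument together with the first sandwich gives
\[
\frakR\!\bigl((x-\epsilon)q_{n_N}/N,\ldots\bigr)\le N\cdot S(x,N,\alpha)\le \frakR\!\bigl((x+\epsilon)q_{n_N}/N,\ldots\bigr),
\]
and applying the hypothesis to both extremes yields $2(x-\epsilon)\le\liminf S(x,N,\alpha)\le\limsup S(x,N,\alpha)\le 2(x+\epsilon)$; sending $\epsilon\to 0$ completes the proof. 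The only delicate point is showing that the perturbation $\delta_N\to 0$ does not spoil the asymptotic, and this is handled cleanly by the above monotonicity argument.
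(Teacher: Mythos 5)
Your argument is correct and is essentially the paper's own proof: the same convergent-based sandwich $\frakR\bigl((x-\delta_N)q_{n_N}/N,\dots\bigr)\le N\,S(x,N,\alpha)\le \frakR\bigl((x+\delta_N)q_{n_N}/N,\dots\bigr)$ with $\delta_N\to 0$ forced by $q_{n_N}q_{n_N+1}\gg N^{3+\kappa}$, followed by monotonicity of the counting functions in the cutoff. You merely write out explicitly the two directions that the paper compresses into ``from this the result follows easily.''
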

\begin{proof}
Note that
\begin{align*}
\Vert ab\alpha \Vert \le \frac{x}{N}
\end{align*}
if and only if
\begin{align*}
\left\Vert abp_n + abq_n\left(\alpha-\frac{p_n}{q_n}\right)\right\Vert_{q_n} \le \frac{xq_n}{N}.
\end{align*}
Now
\begin{align*}
\left\vert  abq_n\left(\alpha-\frac{p_n}{q_n}\right)\right\vert \le \frac{N^2}{q_{n+1}},
\end{align*}
and this implies that
\begin{align*}
\frakR\left(\frac{xq_n}{N}-\frac{N^2}{q_{n+1}},N,p_n,q_n\right) \le NS(\alpha,N,x) \le \frakR\left(\frac{xq_n}{N}+\frac{N^2}{q_{n+1}},N,p_n,q_n\right).
\end{align*}
Assume $\frac{N^2}{q_{n+1}} =o\left(\frac{xq_n}{N}\right)$ as $N \to \infty$. Since $\frakR(y,N,p, q)$ is an increasing function of $y$ we conclude that for any $\varepsilon > 0$
\begin{align*}
\frakR\left(\frac{(x-\varepsilon)q_n}{N},N,p_n,q_n\right) \le NS(\alpha,N,x) \le \frakR\left(\frac{(x+\varepsilon)q_n}{N},N,p_n,q_n\right)
\end{align*}
for $N$ sufficiently large. From this the result follows easily.
\end{proof}
Now we have an arithmetic version of Conjecture \ref{rs_conj}. However, the constraints on $a$ and $b$ in the definition of $\frakR(y,N,p,q)$ are a bit complicated. We can split $\frakR(y,N,p,q)$ into some nicer pieces. The hope is that we can say something about these. This is where Conjecture \ref{main_conj} enters the picture as we will see below. First we make a (technical) conjecture:
\begin{conj}\label{div_conj}
Let $\calK, \lambda ,\zeta, c > 0$ be constants with $\lambda < 1$. Let $N \ge 1$ and assume $p/q \in \Q$ (with $(p,q)=1$) is of type $(2+\lambda,\calK)$ and $N^{\frac{3}{2+\lambda}}\le q \le N^{\frac{3(1+\lambda)}{2+\lambda}}$. Then
\begin{align*}
\sum_{\vert r \vert \le \frac{\zeta q}{N}} \sum_{p m\equiv r(q)} \tau_{cN,N}(m) \sim 2 \zeta c N
\end{align*}
as $N \to \infty$ uniformly in $q$ and $p$.
\end{conj}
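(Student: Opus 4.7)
\textbf{Strategy via Conjecture \ref{main_conj}.} The plan is to recognize Conjecture \ref{div_conj} as the specialization of Conjecture \ref{main_conj} to $M = cN$, $K = \zeta q/N$, and $\rho = p^{-1} \bmod q$ (which makes sense since $(p,q)=1$). First I verify the hypotheses of Conjecture \ref{main_conj}: since $\lambda < 1$, the upper bound $q \le N^{3(1+\lambda)/(2+\lambda)}$ equals $N^{2-(1-\lambda)/(2+\lambda)}$, so $q \le N^{2-\delta}$ with $\delta=(1-\lambda)/(2+\lambda)>0$; the lower bound $q \ge N^{3/(2+\lambda)}$ forces $K = \zeta q/N \ge \zeta N^{(1-\lambda)/(2+\lambda)} \ge N^\eta$ for some $\eta > 0$; and $M=cN$ satisfies $M \asymp N$. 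Splitting $\sum_{\vert r\vert \le K} = \sum_{r=1}^{K}+\sum_{r=-K}^{-1}+\{r=0\}$, applying Conjecture \ref{main_conj} to each of the first two sums (the second with $\rho\mapsto -\rho$), and bounding the $r=0$ contribution trivially by $\sum_{m\equiv 0\,(q)}\tau_{cN,N}(m)\ll N^2\tau(q)/q\ll N^{(1+2\lambda)/(2+\lambda)+\epsilon}=o(N)$, yields the asymptotic $2\zeta cN$.

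\textbf{A direct attempt.} Since Conjecture \ref{main_conj} itself is open, one would like a direct proof. Opening the sum with additive characters modulo $q$ gives
\[
\sum_{\vert r\vert \le K}\sum_{pab\equiv r(q)}1 \;=\; \frac{1}{q}\sum_{h=0}^{q-1}\widehat{\chi}_K(h)\,T(h),\qquad T(h) \;=\; \sum_{a\le cN}\sum_{b\le N}e\!\left(\frac{hpab}{q}\right),
\]
where $\widehat{\chi}_K(h) = \sum_{\vert r\vert \le K}e(-hr/q)$ satisfies $\vert\widehat{\chi}_K(h)\vert \ll \min(K,\,1/\Vert h/q\Vert)$. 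The $h=0$ term produces the main term $(2K+1)cN^2/q = 2\zeta cN + O(cN^2/q)$, and the error here is $o(N)$ since $(1+2\lambda)/(2+\lambda)<1$. For $h\ne 0$, estimating the $b$-sum by $\min(N,1/\Vert hpa/q\Vert)$ reduces the task to bounding $\sum_{a\le cN}\min(N,\,q/\Vert hpa\Vert_q)$, which is attackable by Vinogradov-type estimates using $(p,q)=1$ and the Diophantine type $(2+\lambda,\calK)$ of $p/q$.

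\textbf{Main obstacle.} The serious difficulty lies in the upper half of the $q$-range, namely $q \gtrsim N^{3/2}$. In this regime the elementary bilinear exponential sum bounds above produce an error of order $N$, the same size as the main term $2\zeta cN$, and Theorem \ref{lattice} itself is no better: its $N(KN/q)^{7/8}$ error becomes $O_\zeta(N)$ once $K=\zeta q/N$, so it fails to give an asymptotic. Breaking this barrier is of depth comparable to the open problem of counting divisors in arithmetic progressions to moduli $q > x^{1/2}$, and would plausibly need genuine cancellation from Weil bounds on Kloosterman sums (combined with a reflection or Cauchy--Schwarz argument on the $h$-average), or spectral input in the style of Deshouillers--Iwaniec, perhaps exploiting the smoothing effect of the $r$-average to replace the sharp cutoff by a smooth weight. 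Any improvement replacing the $7/8$ exponent in Theorem \ref{lattice} by $7/8 - c(\lambda)$ for some $c(\lambda)>0$ would suffice, and this is the main arithmetic input still needed to complete the present approach to Conjecture \ref{rs_conj}.
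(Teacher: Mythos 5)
The statement is a conjecture that the paper itself leaves open; its only ``proof'' in the paper is the one-line remark that Conjecture \ref{main_conj} implies Conjecture \ref{div_conj}. Your first paragraph carries out exactly that reduction with the hypotheses of Conjecture \ref{main_conj} verified correctly (the exponent computations, the treatment of $r=0$, and the negative $r$ via $\rho\mapsto-\rho$ are all fine), and your assessment that no unconditional proof is currently available --- in particular that Theorem \ref{lattice} degenerates to an $O(N)$ error term once $K=\zeta q/N$ --- accurately reflects the state of affairs in the paper.
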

Clearly Conjecture \ref{main_conj} implies Conjecture \ref{div_conj}. From the next proposition we may therefore conclude that Conjecture \ref{main_conj} implies Conjecture \ref{rs_conj}.
\begin{prop}\label{tauConj_implies_pc}
Assume Conjecture \ref{div_conj} holds with some $\lambda \in (0,1)$ and for any $\calK > 1$. For $\alpha$ of type $2+\beta$ with $\beta < \lambda$ the pair correlation for the fractional parts of $n^2\alpha$ is Poissonian. In particular Conjecture \ref{rs_conj} holds.
\end{prop}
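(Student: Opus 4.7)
I would combine Proposition~\ref{arithmetic_form} with Conjecture~\ref{div_conj} via a geometric decomposition of the region defining $\frakR$. Fix $\alpha$ of type $2+\beta$ with $\beta<\lambda$, and for each $N$ let $n_N$ be the largest index with $q_{n_N}\le N^{3(1+\lambda)/(2+\lambda)}$. The bound $q_{n+1}\le c^{-1}q_n^{1+\beta}$ (from $\alpha$ being of type $2+\beta$) together with $q_{n_N+1}>N^{3(1+\lambda)/(2+\lambda)}$ forces $q_{n_N}\gg N^{3(1+\lambda)/((1+\beta)(2+\lambda))}$, which strictly exceeds $N^{3/(2+\lambda)}$ thanks to $\beta<\lambda$; hence $q_{n_N}$ lies in the window of Conjecture~\ref{div_conj}. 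Multiplying by $q_{n_N+1}>N^{3(1+\lambda)/(2+\lambda)}$ yields $q_{n_N}q_{n_N+1}\gg N^{3+\kappa}$ with $\kappa=3(\lambda-\beta)/((1+\beta)(2+\lambda))>0$, which is the admissibility condition in Proposition~\ref{arithmetic_form}. That $p_{n_N}/q_{n_N}$ is of type $(2+\lambda,\calK)$ for some $\calK$ depending only on $\alpha$ is standard: the integrality bound $|p_nv-uq_n|\ge1$ covers $v\ge(q_{n_N}/\calK)^{1/(1+\lambda)}$, while for smaller $v$ one combines $|\alpha-u/v|\gg v^{-(2+\beta)}$ with $|\alpha-p_{n_N}/q_{n_N}|\le(q_{n_N}q_{n_N+1})^{-1}$, the gap closing because $\beta<\lambda$.

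Setting $p=p_{n_N}$, $q=q_{n_N}$, $y=xq/N$, the task becomes $\frakR(y,N,p,q)\sim 2xN$. The symmetry $(a,b)\leftrightarrow(a,-b)$ (under which $p\mapsto -p$ and $\|pab\|_q$ is unchanged) reduces the count to the half-diamond $\{1\le b\le N,\ b\le a\le 2N-b\}$ and a doubling. I write this half-diamond, by inclusion-exclusion, as $[1,2N]\times[1,N]$ minus two right triangles, each of which is congruent to half of $[1,N]\times[1,N]$ via $(a,b)\leftrightarrow(b,a)$ respectively $(a,b)\leftrightarrow(2N-a,b)$. Conjecture~\ref{div_conj} with $\zeta=x$ applied to the full rectangle (with $c=2$) gives $\sim 4xN$, and applied to $[1,N]\times[1,N]$ (with $c=1$) gives $\sim 2xN$, so each triangle contributes $\sim xN$ (the diagonal $a=b$ only $O(x)$). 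Subtracting and doubling for $b<0$ yields $\sim 4xN$ for the full (un-parity-restricted) diamond.

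The parity condition $2\mid a+b$ is imposed by partitioning $(a,b)$ into the four residue classes mod $2$ and showing each contributes a quarter of the total. For the even-even class $(a,b)=(2a',2b')$, the substitution transforms $pab\equiv r\pmod q$ into $\tilde p a'b'\equiv r'\pmod{\tilde q}$ with $\tilde q=q/\gcd(4,q)$ and $\tilde p$ coprime to $\tilde q$, where $\tilde p/\tilde q$ is still of type $(2+\lambda,\calK')$; a second application of Conjecture~\ref{div_conj} to the rescaled rectangle (with $N\to N/2$) produces the expected $\sim xN$. The other three classes are handled analogously, the odd-odd class requiring an additional shift of the $r$-interval to absorb the linear correction from expanding $(2a'-1)(2b'-1)=4a'b'-2(a'+b')+1$. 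The two retained classes $(0,0)$ and $(1,1)$ together give $\frakR(y,N,p,q)\sim 2xN$, and Proposition~\ref{arithmetic_form} then concludes that the pair correlation is Poissonian. Since every Diophantine $\alpha$ is of type $2+\beta$ for every $\beta>0$, in particular for some $\beta<\lambda$, Conjecture~\ref{rs_conj} follows.

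The main obstacle will be the parity step, especially the odd-odd class where the linear correction $-2p(a'+b')$ complicates direct application of Conjecture~\ref{div_conj}, and the careful bookkeeping of error terms. Since the density of the congruence in each rectangle is only $\sim y/q=x/N$, the main-term mass in each piece is only $\sim xN$, leaving little room. However, the uniformity-in-$(p,q)$ built into Conjecture~\ref{div_conj} (over the admissible window, with constants depending only on $\lambda$, $\calK$, $\zeta$, $c$) keeps all the $o(xN)$-errors under control across the bounded number of rectangular pieces, which suffices to obtain the clean asymptotic.
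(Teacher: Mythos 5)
Your overall architecture (invoke Proposition \ref{arithmetic_form}, check that the chosen convergent lies in the window of Conjecture \ref{div_conj} and is of type $(2+\lambda,\calK)$, then decompose the diamond and impose parity) is the same as the paper's, and your Diophantine bookkeeping at the start is correct and even a bit more explicit than the paper's. However, two of your reductions to Conjecture \ref{div_conj} do not go through as stated. First, the triangle $\{1\le b\le N,\ 2N-b<a\le 2N\}$ is \emph{not} congruent to half of $[1,N]^2$ in any sense that preserves the arithmetic: the reflection $(a,b)\mapsto(2N-a,b)$ carries the condition $\Vert pab\Vert_q\le xq/N$ into $\Vert p(2N-a)b\Vert_q\le xq/N$, i.e.\ $pab\equiv 2Npb-r\ (q)$, a different and $b$-dependent congruence; only the swap $(a,b)\leftrightarrow(b,a)$ preserves the product $ab$. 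Since this far triangle is not a finite inclusion-exclusion of origin-anchored rectangles, you cannot avoid a limiting approximation here. The paper covers the off-diagonal region by $4^k$ dyadic squares $I(k,l_1)\times J(k,l_2)$, evaluates each by inclusion-exclusion of four anchored rectangles via Conjecture \ref{div_conj}, and squeezes the count between the inner and outer unions as $k\to\infty$.

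Second, the odd-odd parity class cannot be repaired by ``a shift of the $r$-interval'': expanding $(2a'-1)(2b'-1)=4a'b'-2(a'+b')+1$, the correction $-2p(a'+b')$ depends on the summation variables $a'$ and $b'$, so no fixed translate of $\{\vert r\vert\le xq/N\}$ absorbs it, and Conjecture \ref{div_conj} (which only sees the product $a'b'$) does not apply to the shifted quantity. The paper instead works at the level of the product $m=ab$ and its $2$-adic valuation: it writes $\sum_{m\ \mathrm{odd}}\tau_N^*(m)=\sum_m\tau_N^*(m)-\sum_{2\mid m}\tau_N^*(m)$ and uses the identity $\tau_N^*(2m)=2\tau_{N,N/2}(m)-\tau_{N/2}^*(\cdot)$, together with the case analysis on $q$ modulo $4$ in (\ref{Ttau_rel}) and (\ref{frakT_tau_star}), so that every term is again a sum of some $\tau_{M',N'}$ over a genuine congruence class to which Conjecture \ref{div_conj} applies. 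Until the far triangle and the odd-odd class are treated by arguments of this kind, the proof is incomplete.
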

\begin{proof}
Let us first ignore the technical conditions on $N$, $p$ and $q$.
We see that
\begin{gather}
\begin{split}\label{split_frakR}
\frakR&\left(\frac{xq}{N},N,p, q\right) = \#\left\{(a,b) \in \N^2 \mid a,b \le N, \ 2 \mid a+b, \ \Vert abp \Vert_{q} \le xq/N \right\}+\\
&\phantom{==}2\#\{(a,b)\in \N^2 \mid N < a < 2N, \ 0 < b \le 2N-a, \ 2 \mid a+b, \ \Vert abp \Vert_{q} \le xq/N \}+\\
&\phantom{==}\#\{a \in \N \mid a \le N, \Vert a^2 p \Vert_{q} \le xq/N \}.
\end{split}
\end{gather}
The last term is negligible as $N \to \infty$, since the sequence $\{n^2\alpha \}_1^\infty$ is equidistributed modulo $1$. We will see that the first two terms are of the same size (if we assume Conjecture \ref{div_conj}). We start by considering the first term. We define
\begin{align*}
\frakT_M(m) = \# \{(a,b)\in \N^2 \mid  a,b \le M, \ a \equiv b (2) , \ ab = m\}.
\end{align*}
Note that
\begin{align}\label{Ttau_rel}
\frakT_M(m) = \begin{cases} \tau_M^*(m) & {\rm if \ } m \equiv 1 (2)\\ 0 & {\rm if \ } m \equiv 2 (4)\\
\tau_{M/2}^*(m/4) & {\rm if \ } m \equiv 0 (4) \end{cases}.
\end{align}
Using this notation we see that (assuming that $N > 2x$)
\begin{align*}
\#\left\{(a,b) \in \N^2 \mid a,b \le N, \ 2 \mid a+b, \ \Vert abp \Vert_{q} \le xq/N \right\}= \sum_{\vert r \vert \le \frac{xq}{N}}\sum_{m \equiv r\overline{p} (q)}\frakT_N(m),
\end{align*}
where $\overline{p}$ is the inverse of $p$ modulo $q$. Using (\ref{Ttau_rel}) we can write this as
\begin{align*}
\sum_{\vert r \vert \le \frac{xq}{N}}\sum_{m \equiv r\overline{p} (q)}\frakT_N(m) = \sum_{\vert r \vert \le \frac{xq}{N}}\bigl(\sum_{2k+1 \equiv r\overline{p} (q)}\tau_N^*(2k+1)+\sum_{4l \equiv r\overline{p} (q)} \tau_{N/2}^*(l) \bigr).
\end{align*}
Furthermore we see that
\begin{align}\label{frakT_tau_star}
\sum_{\vert r \vert \le \frac{xq}{N}}\sum_{4l \equiv r\overline{p} (q)} \tau_{N/2}^*(l) = \begin{cases} \sum_{\vert r \vert \le \frac{xq}{N}}\sum_{l \equiv r\overline{4p} (q)} \tau_{N/2}^*(l) & {\rm if \ } q \equiv 1 (2)\\ \sum_{\vert r \vert \le \frac{xq}{2N}}\sum_{l \equiv r\overline{2p} (q/2)} \tau_{N/2}^*(l)  & {\rm if \ } 2 \parallel q\\
\sum_{\vert r \vert \le \frac{xq}{4N}}\sum_{l \equiv r\overline{p} (q/4)} \tau_{N/2}^*(l) & {\rm if \ } 4 \mid q \end{cases}.
\end{align}
By (\ref{frakT_tau_star}) Conjecture \ref{div_conj} implies that
\begin{align*}
\sum_{\vert r \vert \le \frac{xq}{N}}\sum_{4l \equiv r\overline{p} (q)} \tau_{N/2}^*(l) \sim \frac{1}{2}xN.
\end{align*}
Note also that
\begin{align*}
\sum_{2k+1 \equiv r\overline{p} (q)}\tau_N^*(2k+1) = \sum_{m \equiv r\overline{p} (q)}\tau_N^*(m)- \sum_{2m \equiv r\overline{p} (q)}\tau_N^*(2m).
\end{align*}
If $q$ is odd then
\begin{align*}
\sum_{2m \equiv r\overline{p} (q)}\tau_N^*(2m) = 2\sum_{m \equiv r\overline{2p} (q)}\tau_{N,N/2}(m)-\sum_{m \equiv r\overline{4p} (q)}\tau_{N/2}^*(m).
\end{align*}
Thus Conjecture \ref{div_conj} implies
\begin{align*}
\sum_{\vert r \vert \le \frac{xq}{N}}\sum_{2k+1 \equiv r\overline{p} (q)}\tau_N^*(2k+1) \sim \frac{1}{2}xN.
\end{align*}
Now assume $q$ is even. This implies that $p$ and $\overline{p}$ are odd. Thus $m$ is even if and only if $r$ is even. Hence
\begin{align*}
\sum_{\vert r \vert \le \frac{xq}{N}}\sum_{2m \equiv r\overline{p} (q)}\tau_N^*(2m) &= 2\sum_{\vert r \vert \le \frac{xq}{2N}}\sum_{m \equiv r\overline{p} (q/2)}\tau_{N,N/2}(m)-\\
&\phantom{=}\begin{cases} \sum_{\vert r \vert \le \frac{xq}{2N}}\sum_{m \equiv r\overline{2p} (q/2)} \tau_{N/2}^*(m) & {\rm if \ } 2 \parallel q\\ \sum_{\vert r \vert \le \frac{xq}{4N}}\sum_{m \equiv r\overline{p} (q/4)} \tau_{N/2}^*(m) & {\rm if \ } 4 \mid q \end{cases}.\\
&\sim \frac{3}{2}xN.
\end{align*}
Thus we conclude that 
\begin{align*}
\sum_{\vert r \vert \le \frac{xq}{N}}\sum_{m \equiv r\overline{p} (q)}\frakT_N(m) \sim xN.
\end{align*}
Now we consider the second term in (\ref{split_frakR}) and we set
\begin{align*}
\mathfrak{S} = \#\left\{(a,b)\in \N^2 \mid N < a < 2N, \ 0 < b \le 2N-a, \ 2 \mid a+b, \ \Vert abp \Vert_{q} \le xq/N \right\}.
\end{align*}
For $k,l_1,l_2 \in \N$ such that $l_1 + l_2 \le 2^k$ we define
\begin{align*}
T_{l_1,l_2}(k) = \#\left\{(a,b)\in \N^2 \mid a \in I(k,l_1), b \in J(k,l_2), \ 2 \mid a+b, \ \Vert abp \Vert_{q} \le xq/N \right\},
\end{align*}
where
\begin{align*}
I(k,l_1) = (N(1+(l_1-1)/2^k) , N(1+l_1/2^k)]
\end{align*}
and
\begin{align*}
J(k,l_2) = (N(l_2-1)/2^k , Nl_2/2^k].
\end{align*}
As before we deduce (using Conjecture \ref{div_conj}) that
\begin{align}\label{T_rel}
T_{l_1,l_2}(k) \sim \frac{1}{4^k}xN
\end{align}
uniformly in $l_1$ and $l_2$ (since $k$ is fixed). Clearly we have
\begin{align*}
\sum_{\substack{l_1,l_2\\ l_1 + l_2 \le 2^k-1}}&T_{l_1,l_2}(k)\\
 &\le \#\left\{(a,b)\in \N^2 \mid N < a < 2N, \ 0 < b \le 2N-a, \ 2 \mid a+b, \ \Vert abp \Vert_{q} \le xq/N \right\}\\
&\le \sum_{\substack{l_1,l_2\\ l_1 + l_2 \le 2^k}}T_{l_1,l_2}(k).
\end{align*}
Recall that $\#\{(l_1,l_2)\in \N^2 \mid l_1 + l_2 \le m\} = \frac{m(m+1)}{2}$. Using (\ref{T_rel}) we see that
\begin{align*}
\frac{1-2^{-k}}{2}x \le \liminf_N \frac{\mathfrak{S}}{N} \le \limsup_N \frac{\mathfrak{S}}{N} \le \frac{1+2^{-k}}{2}x.
\end{align*}
Since this holds for any $k$ we must have $\mathfrak{S} \sim xN/2$ as desired. By Proposition \ref{arithmetic_form} it remains to prove that there exists $\kappa > 0$ such that for each $N$ sufficiently large we can choose $q_n$ and $q_{n+1}$ such that $N^{3+\kappa} \ll q_n q_{n+1}$. By Conjecture \ref{div_conj} we must take $q_n \le N^{\frac{3(1+\lambda)}{(2+\lambda)}}$. Recall that $\alpha$ is of type $2 + \beta$. Choose $n$ such that
\begin{align*}
q_n \le N^{\frac{3(1+\lambda)}{(2+\lambda)}} \le q_{n+1}.
\end{align*}
The condition that $\alpha$ is of type $2+\beta$ implies that
\begin{align*}
q_n q_{n+1} \ll q_n^{2+\beta}
\end{align*}
and hence
\begin{align*}
q_n q_{n+1} \gg q_{n+1}^{1+\frac{1}{1+\beta}} \ge N^{\frac{3(1+\lambda)}{(2+\lambda)}(1+\frac{1}{1+\beta})}.
\end{align*}
Thus we can choose $\kappa = \frac{\lambda-\beta}{(1+\beta)(2+\lambda)}$.
\end{proof}
It should be mentioned that there may be some loss in using the rational approximation at an early stage in Proposition \ref{arithmetic_form}. For this approach to work we must be able to work with $q \ge N^{3/2+\delta}$. In Theorem \ref{lattice} we can say something about values of $q$ that are slightly smaller. In the proof of (\ref{HB_result}) Heath-Brown was able to work with $\alpha$ rather than its convergents and only use the Diophantine approximation at the very end of the proof allowing the use of smaller values of $q$.

By condition (\ref{cond_ratio}) we see that the inverse of $p_n$ modulo $q_n$ is $\pm q_{n+1}$.  To begin with one could study
\begin{align*}
\sum_{\vert r \vert \le \frac{xq_n}{N}}\sum_{m \equiv rq_{n+1}(q_n)}\tau_N^*(m).
\end{align*}
Perhaps one can use this information to say more about the pair correlation problem for specific $\alpha$'s such as $\sqrt{2}$ or the golden ratio where the $q_n$'s are known.

\section{Preliminary Evidence for Conjecture \ref{main_conj}}
We will now explain why we should expect the asymptotics in Conjecture \ref{main_conj}. We try the ``naive'' approach. Define
\begin{align*}
\delta_d(n) = \begin{cases} 1 & {\rm if \ } d \mid n\\ 0 & {\rm if \ } d \nmid n \end{cases}.
\end{align*}
Assume $M,N \ge 2$. Clearly $\tau_{M,N}(m) = \sum_{d=\lceil m/N\rceil}^M \delta_d(m)$. Thus
\begin{align*}
\sum_{r \le K} \sum_{m\equiv \rho r (q)} \tau_{M,N}(m) &= \sum_{r \le K} \sum_{l \le (MN-[r\rho]_q)/q}\sum_{d=\lceil (ql+[r\rho]_q)/N\rceil}^M \delta_d(ql+[r\rho]_q)\\
&=\sum_{r \le K} \sum_{l \le (MN-[r\rho]_q)/q}\sum_{(ql+[r\rho]_q)/N \le d \le M} \delta_d(ql+[r\rho]_q)+O(N^2q^{-1+\epsilon})\\
&=\sum_{r \le K} \sum_{[r\rho]_q/N \le d \le M}\sum_{
0 \le l \le (Nd-[r\rho]_q)/q} \delta_d(ql+[r\rho]_q)+O(N^2q^{-1+\epsilon})
\end{align*}
where $[\cdot]_q$ denotes the remainder when dividing by $q$. Now the length of the $l$-interval can be much smaller than $d$ and this is where the approach fails. We should expect that
\begin{align*}
\sum_{[r\rho]_q/N \le d \le M}\sum_{
0 \le l \le (Nd-[r\rho]_q)/q} \delta_d(ql+[r\rho]_q)
\end{align*}
is heuristically
\begin{align*}
\sum_{\substack{[r\rho]_q/N \le d \le M\\ (d,q)\mid r}}\frac{Nd-[r\rho]_q}{dq}(d,q) =\frac{N}{q} \sum_{\substack{d \le M\\ (d,q)\mid r}}(d,q)+O\left(\sum_{\substack{d \le M\\ (d,q)\mid r}}\frac{(d,q)}{d}\right)+O\left(\frac{N}{q}\sum_{\substack{d \le q/N\\ (d,q)\mid r}}(d,q)\right),
\end{align*}
and it is the case if $q \le N^{1-\delta}$. Using Lemma \ref{assymp_lemma} below we see that the ``expected'' value of $\sum_{r \le K} \sum_{m\equiv \rho r (q)} \tau_{M,N}(m)$ is
\begin{align*}
\frac{KMN}{q}+O((K+N)Nq^{-1+\epsilon} + Kq^\epsilon).
\end{align*}

On numerous occasions we will use the fact that
\begin{align}\label{r_coprime}
\phi(x,q) = \# \{ n \in\N \mid n \le x, \ (n,q) =1 \} = \sum_{d \mid q} \mu(d)\left[ \frac{x}{d} \right] = \frac{\phi(q)x}{q} + O(\tau(q)). 
\end{align}
This implies that for $k \mid q$
\begin{align}\label{r_cop}
\# \{r\in \N \mid r \le K, \ (r,q) = k\} = \frac{K\phi(q/k)}{q}+O(q^\varepsilon).
\end{align}

\begin{lemma}\label{assymp_lemma}
Let $N,K \ge 1$ and $q \in \N$. Then
\begin{align}
\sum_{r \le K} \sum_{\substack{d \le N\\ (d,q) \mid r}} (d,q) = KN + O((K+N)q^{\varepsilon}),
\end{align}
\begin{align}
\sum_{r \le K} \sum_{\substack{d \le N\\ (d,q) \mid r}} \frac{(d,q)}{d} = K\log N +O(Kq^\varepsilon)+O(q^\varepsilon \log N)
\end{align}
and
\begin{align}
\sum_{r \le K} r \sum_{\substack{d \le N\\ (d,q) \mid r}} (d,q) = \frac{1}{2}NK^2 + O(K(K+N)q^\varepsilon)
\end{align}
for any $\varepsilon > 0$.
\end{lemma}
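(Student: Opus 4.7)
All three identities follow by interchanging the order of summation, putting the $d$-sum outside and the $r$-sum inside, and invoking the standard counts
\begin{align*}
\#\{r \le K : m \mid r\} = \left\lfloor \frac{K}{m}\right\rfloor, \qquad \sum_{\substack{r \le K\\ m \mid r}} r = m\left\lfloor \frac{K}{m}\right\rfloor\Bigl(\Bigl\lfloor \frac{K}{m}\Bigr\rfloor + 1\Bigr)/2.
\end{align*}
The crucial observation is that for $d$ with $(d,q) > K$ the inner $r$-sum is empty, so effectively $d$ ranges over $d \le N$ with $(d,q) \le K$; expanding $\lfloor K/(d,q)\rfloor = K/(d,q) + O(1)$ then reduces each identity to a main term proportional to $\#\{d \le N : (d,q) \le K\}$ plus a divisor-type error.

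The three auxiliary estimates needed are obtained by partitioning $d \le N$ according to the value $k = (d,q) \mid q$ and using $\#\{d \le N : (d,q) = k\} \le N/k$ together with the divisor bound $\tau(q) \ll q^{\varepsilon}$:
\begin{align*}
\sum_{d \le N}(d,q) \ll Nq^{\varepsilon}, \qquad \sum_{d \le N}\frac{(d,q)}{d} \ll q^{\varepsilon}\log N, \qquad \#\{d \le N : (d,q) > K\} \le \frac{N\tau(q)}{K}.
\end{align*}
The third of these controls the difference between $\#\{d \le N : (d,q) \le K\}$ and $N$.

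For the first sum, each $d$ with $(d,q) \le K$ contributes $(d,q)\lfloor K/(d,q)\rfloor = K + O((d,q))$; summing, applying the third auxiliary estimate to replace the count by $N$, and using the first auxiliary estimate on the error, yields $KN + O(Nq^{\varepsilon})$, which is stronger than the stated $O((K+N)q^{\varepsilon})$. For the second sum, the per-term contribution is $K/d + O((d,q)/d)$; the error sums to $O(q^{\varepsilon}\log N)$, while $K\sum_{d \le N,\,(d,q) \le K}1/d = K\log N + O(K) + O(q^{\varepsilon}\log N)$ after splitting off the tail $(d,q) > K$, which contributes at most $(1/K)\tau(q)\log N$ after multiplication by $K$. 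For the third sum, the inner $r$-sum evaluates to $(d,q)\lfloor K/(d,q)\rfloor(\lfloor K/(d,q)\rfloor+1)/2 = K^{2}/(2(d,q)) + O(K)$, so each weighted term gives $K^{2}/2 + O(K(d,q))$, summing to $\tfrac{1}{2}NK^{2} + O(KNq^{\varepsilon})$.

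The argument presents no genuine obstacle. The only point requiring care is to verify that the restriction $(d,q) \le K$ enforced by the non-emptiness of the inner sum shrinks the count $\#\{d \le N\}$ by at most $N\tau(q)/K$, so that after multiplication by the leading-order factor ($K$, $K/d$, or $K^{2}/2$) the resulting error is absorbed into the stated $q^{\varepsilon}$-size bound.
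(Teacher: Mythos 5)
Your argument is correct and establishes all three estimates, but it is organized differently from the paper's proof. The paper first groups the $d$-sum by the value $k=(d,q)$ over divisors $k\mid q$ with $k\le K$, evaluates $\#\{d\le N : (d,q)=k\}$ exactly via M\"obius inversion as $\sum_{a\mid q/k}\mu(a)\lfloor N/(ak)\rfloor$, and then recovers the main term $KN$ from the identity $q^{-1}\sum_{k\mid q}\phi(q/k)=1$ together with the tail bound $\sum_{k\mid q,\,k>K}\phi(q/k)\ll q^{1+\varepsilon}/K$; the second and third identities are then deduced from the first by partial summation. Your route extracts the main term at the level of each individual $d$ via the observation $(d,q)\lfloor K/(d,q)\rfloor = K + O((d,q))$ (and its analogues for the weighted sums), which sidesteps the M\"obius inversion and the $\phi$-identity entirely and replaces partial summation by direct evaluation of the inner $r$-sums. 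This is more elementary and, in my view, cleaner; what the paper's version buys is the exact expression for the main term as $KN\cdot q^{-1}\sum_{k\le K,\,k\mid q}\phi(q/k)$, which is of the same shape as the quantities appearing in Lemma 3.3 and equation (1.6), so the two computations reinforce each other. One small inaccuracy: your claim that the first estimate comes out as the stronger $KN+O(Nq^{\varepsilon})$ ignores the $O(K)$ arising from $\#\{d\le N\}=N+O(1)$ when $N$ is not an integer; the correct conclusion of your computation is $KN+O(K)+O(Nq^{\varepsilon})$, which is still within the stated error $O((K+N)q^{\varepsilon})$, so nothing is lost.
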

\begin{proof}
We see that
\begin{align*}
\sum_{r \le K} \sum_{\substack{d \le N\\ (d,q) \mid r}} (d,q) &= \sum_{\substack{k \mid q\\ k \le K}} k \# \{(r,d) \in \N^2 \mid r \le K, \ d \le N, \ (d,q) = k, \ k \mid r \}\\
&=\sum_{\substack{k \mid q\\ k \le K}} k \left[ \frac{K}{k}\right]\#\{d \in \N \mid d \le N, \ (d,q)=k\}\\
&=\sum_{\substack{k \mid q\\ k \le K}} k \left[ \frac{K}{k}\right] \sum_{a \mid q/k}\mu(a)\left[\frac{N}{ak} \right]\\
&=\sum_{\substack{k \mid q\\ k \le K}} k\left(\frac{K}{k}+O(1)\right)\sum_{a \mid q/k}\mu(a)\left(\frac{N}{ak}+O(1)\right)\\
&=KN\sum_{\substack{k \mid q\\ k \le K}}\sum_{a \mid q/k} \frac{\mu(a)}{ak}+O(Kq^\varepsilon)+O(Nq^\varepsilon)
\end{align*}
for any $\epsilon > 0$. Now
\begin{align*}
\sum_{\substack{k \mid q\\ k \le K}}\sum_{a \mid q/k} \frac{\mu(a)}{ak} = q^{-1}\sum_{\substack{k \mid q\\ k \le K}}\phi(q/k) =1-q^{-1}\sum_{\substack{k \mid q\\ k > K}}\phi(q/k)
\end{align*}
and
\begin{align*}
\sum_{\substack{k \mid q\\ k > K}}\phi(q/k) \le q\sum_{\substack{k \mid q\\ k > K}}\frac{1}{k} \le \frac{q}{K}\sum_{\substack{k \mid q\\ k > K}} 1 = O\left(\frac{q^{1+\epsilon}}{K}\right).
\end{align*}
Thus
\begin{align*}
\sum_{r \le K} \sum_{\substack{d \le N\\ (d,q) \mid r}} (d,q) = KN\left(1+O\left(\frac{q^{\epsilon}}{K}\right)\right)+O(Kq^\varepsilon)+O(Nq^\varepsilon) = KN + O((K+N)q^{\varepsilon}).
\end{align*}
Using partial summation we see that
\begin{align*}
\sum_{r \le K} \sum_{\substack{d \le N\\ (d,q) \mid r}} \frac{(d,q)}{d} &= N^{-1} \sum_{r \le K} \sum_{\substack{d \le N\\ (d,q) \mid r}} (d,q)+\int_1^N \frac{1}{t^2}\sum_{r \le K} \sum_{\substack{d \le t\\ (d,q) \mid r}} (d,q)dt\\
&= K + O((K/N+1)q^\varepsilon)+K\log N +O(Kq^\varepsilon)+O(q^\varepsilon \log N)\\
&= K\log N +O((K+1)q^\varepsilon)+O(q^\varepsilon \log N).
\end{align*}
The last part of the proposition is also proved using partial summation. We omit the details.
\end{proof}

The difficulty in proving Conjecture \ref{main_conj} obviously lies in dealing with the fact that we only consider a very small number of all the residue classes ($K$ can be much smaller than $q$) and at the same time $M$ and $N$ can be much smaller than $q$ (as pointed out earlier). Indeed the above shows:
\begin{prop}\label{triv_prop}
Let $K,M,N \ge 1$ with $M \asymp N$ and $K \ge N^\eta$ for some fixed $\eta > 0$. Assume also that $q \le N^{1-\delta}$ for some $\delta > 0$ and $(q,\rho) = 1$. Then
\begin{align*}
\sum_{r \le K} \sum_{m\equiv \rho r(q)} \tau_{M,N}(m) \sim \frac{KMN}{q}
\end{align*}
as $N \to \infty$ uniformly in $M$, $K$, $q$ and $\rho$.
\end{prop}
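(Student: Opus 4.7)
The plan is to make rigorous the author's heuristic calculation immediately preceding the proposition. Rather than working with the intermediate reshuffling using $\delta_d$, I would use the definition of $\tau_{M,N}(m)$ directly as a count of pairs and swap the order of summation:
\begin{align*}
\sum_{r \le K} \sum_{m\equiv \rho r(q)} \tau_{M,N}(m) = \sum_{r \le K}\sum_{a \le M}\#\left\{b \in \N : b \le N,\ ab \equiv \rho r \pmod q\right\}.
\end{align*}
This replaces the delicate divisor sum by the much simpler problem of counting solutions of a linear congruence in a box.

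To evaluate the inner count, fix $a$ and $r$ and set $g = (a,q)$. The congruence $ab \equiv \rho r \pmod q$ is solvable in $b$ if and only if $g \mid \rho r$, which, because $(\rho,q) = 1$, is equivalent to $g \mid r$. When solvable the admissible $b$'s form a single residue class modulo $q/g$, so the number of solutions with $1 \le b \le N$ equals $gN/q + O(1)$.

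Substituting this evaluation and invoking Lemma \ref{assymp_lemma} with $N$ there replaced by $M$ gives
\begin{align*}
\sum_{r \le K}\sum_{m \equiv \rho r (q)} \tau_{M,N}(m) = \frac{N}{q}\Bigl(KM + O((K+M)q^\epsilon)\Bigr) + O\Biggl(\sum_{r \le K}\sum_{\substack{a \le M\\ (a,q)\mid r}} 1\Biggr).
\end{align*}
The residual $O(1)$-per-term error is handled by the trivial bound $1 \le (a,q)$; a second application of Lemma \ref{assymp_lemma} then produces the overall estimate $O(KM + (K+M)q^\epsilon)$. Combining everything yields the clean identity
\begin{align*}
\sum_{r \le K}\sum_{m \equiv \rho r (q)} \tau_{M,N}(m) = \frac{KMN}{q} + O\bigl((K+M)Nq^{-1+\epsilon} + KM\bigr).
\end{align*}

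What remains is bookkeeping under the hypotheses $M \asymp N$, $q \le N^{1-\delta}$, $K \ge N^\eta$. Dividing the second error by the main term gives $q/N \le N^{-\delta} \to 0$, while dividing the first gives $q^\epsilon(K^{-1}+M^{-1}) \ll N^{\epsilon(1-\delta)-\min(\eta,1)}$, which tends to zero once $\epsilon$ is chosen smaller than $\min(\eta,1)/(1-\delta)$. The only genuine obstacle is ensuring that the rounding losses of $O(1)$ per congruence class, accumulated over a box of size $KM$, remain below the main term $KMN/q$; this is exactly what the constraint $q \le N^{1-\delta}$ guarantees, and it is also precisely the reason the proposition does not extend to larger $q$, motivating the deeper approaches of Theorem \ref{lattice} and Conjecture \ref{main_conj}.
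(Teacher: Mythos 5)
Your argument is correct and is essentially the paper's own: the proposition is deduced from the heuristic computation preceding it, which likewise reduces the sum to counting, for each fixed divisor $d\le M$ (your $a$), the second factor in a residue class with an $O(1)$ error per pair, and then invokes Lemma \ref{assymp_lemma}; your parametrization by the pair $(a,b)$ with $ab\equiv\rho r\ (q)$ is just a cleaner packaging of the same count. The bookkeeping at the end, showing both error terms are $o(KMN/q)$ under $q\le N^{1-\delta}$, $M\asymp N$, $K\ge N^\eta$, matches what the paper asserts.
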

Conjecture \ref{main_conj} says that the asymptotic formula above still holds if we extend the range of $q$ to $q \le N^{2-\delta}$. In the same way we see that Conjecture \ref{HB_conj} holds for small values of $q$. More precisely we have
\begin{align*}
\sum_{m \equiv r (q)} \tau_N^*(m) \ll \frac{N^2 \phi(q)}{q^2}
\end{align*}
for $q \le N^{1-\delta}$, $(r,q) = 1$.

The following lemma will be useful in the next section. It shows that (\ref{tauMN_ap}) would imply the asymptotics in Conjecture \ref{div_conj}.
\begin{lemma}\label{Kq_lemma}
Let $q \in \N$. Then 
\begin{align*}
\sum_{r=1}^q \sum_{d\mid(q,r)}\sum_{c\mid \frac{q}{d}}dc \mu\left(\frac{q}{dc}\right) = q^2.
\end{align*}
If $K> 0$ then for any $\epsilon > 0$
\begin{align*}
\sum_{r\le K} \sum_{d\mid(q,r)}\sum_{c\mid \frac{q}{d}}dc \mu\left(\frac{q}{dc}\right) = Kq + O((K+q)q^{\epsilon}).
\end{align*}
\end{lemma}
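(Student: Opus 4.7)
The key simplification is the identity (\ref{mu_phi_id}) already noted in the paper, namely
\begin{align*}
\sum_{d \mid (q,r)} \sum_{c \mid q/d} dc\,\mu\!\left(\frac{q}{dc}\right) = \sum_{d \mid (q,r)} d\,\phi(q/d).
\end{align*}
This rewrites both sums in a form where $r$ appears only via the divisibility condition $d \mid r$, so both parts of the lemma reduce to swapping the order of summation and then invoking the Gauss identity $\sum_{d \mid q} \phi(d) = q$.

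\textbf{First identity.} Interchange the $d$- and $r$-sums: for each divisor $d$ of $q$, the number of $r \in \{1,\dots,q\}$ divisible by $d$ is exactly $q/d$. Thus
\begin{align*}
\sum_{r=1}^{q} \sum_{d \mid (q,r)} d\,\phi(q/d) \;=\; \sum_{d \mid q} d\,\phi(q/d) \cdot \frac{q}{d} \;=\; q \sum_{d \mid q} \phi(q/d) \;=\; q^2,
\end{align*}
using $\sum_{d \mid q} \phi(q/d) = \sum_{d \mid q} \phi(d) = q$ in the final step.

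\textbf{Second identity.} The same swap gives
\begin{align*}
\sum_{r \le K} \sum_{d \mid (q,r)} d\,\phi(q/d) \;=\; \sum_{d \mid q} d\,\phi(q/d) \left\lfloor \frac{K}{d} \right\rfloor.
\end{align*}
Write $\lfloor K/d \rfloor = K/d - \{K/d\}$ with $\{K/d\} \in [0,1)$ (this formula also handles the $d > K$ terms correctly, where the floor vanishes and the fractional part is $K/d < 1$). The main term is $K \sum_{d \mid q} \phi(q/d) = Kq$, and the remainder is bounded by
\begin{align*}
\sum_{d \mid q} d\,\phi(q/d) \;\le\; \sum_{d \mid q} d \cdot \frac{q}{d} \;=\; q\,\tau(q) \;\ll\; q^{1+\epsilon},
\end{align*}
which is absorbed into $O((K+q)q^{\epsilon})$.

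\textbf{Obstacle.} There is no real obstacle: once (\ref{mu_phi_id}) is applied the statement is a one-line exercise with the Gauss identity and a trivial divisor bound. The only point requiring minor care is handling the truncation $r \le K$ when $K$ is small relative to some divisors of $q$, but using $\lfloor K/d \rfloor = K/d + O(1)$ uniformly in $d$ already settles it.
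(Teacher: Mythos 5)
Your proof is correct, and it takes a genuinely different and in fact shorter route than the paper's. The paper proves the first identity by grouping the residues $r\in\{1,\dots,q\}$ according to the value of $k=(r,q)$, observing that the resulting expression is multiplicative in $q$, and then verifying the identity at prime powers by an explicit geometric-series computation; the truncated version is then deduced from the full one via the counting formula (\ref{r_cop}) for $\#\{r\le K : (r,q)=k\}$. You instead collapse the inner double sum at the outset using $\sum_{c\mid m}c\,\mu(m/c)=\phi(m)$ --- i.e.\ the identity (\ref{mu_phi_id}) that the paper records but, curiously, does not use in its own proof of this lemma --- and then interchange the $r$- and $d$-sums directly, counting multiples of $d$ and finishing with Gauss's identity $\sum_{d\mid q}\phi(d)=q$ and the trivial bound $\sum_{d\mid q}d\,\phi(q/d)\le q\,\tau(q)\ll q^{1+\epsilon}$. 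This avoids both the multiplicativity argument and the prime-power verification, and it even yields the slightly sharper error term $O(q^{1+\epsilon})$ for the truncated sum, which of course implies the stated $O((K+q)q^{\epsilon})$. The paper's approach buys nothing extra here beyond reusing machinery ((\ref{r_cop}) and the $(r,q)=k$ decomposition) that it needs elsewhere; as a self-contained proof of the lemma, yours is preferable.
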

\begin{proof} We see that
\begin{align*}
\sum_{r=1}^q \sum_{d\mid(q,r)}\sum_{c\mid \frac{q}{d}}dc \mu\left(\frac{q}{dc}\right) = \sum_{k \mid q}\phi(q/k)\sum_{d\mid k}\sum_{c\mid \frac{q}{d}}dc \mu\left(\frac{q}{dc}\right).
\end{align*}
Since the left hand side is a multiplicative function of $q$ it suffices to prove that
\begin{align}\label{prime_id}
\sum_{n = 0}^l \phi(p^{l-n})\sum_{m = 0}^n\sum_{j= 0}^{l-m}p^{m+j}\mu(p^{l-m-j}) = p^{2l}.
\end{align}
for a prime $p$. One easily checks that
\begin{align*}
\sum_{n = 0}^l \phi(p^{l-n})\sum_{m = 0}^n\sum_{j= 0}^{l-m}p^{m+j}\mu(p^{l-m-j}) &= p^{l-1}+p^l(1-p^{-1})\sum_{n = 0}^l \phi(p^{l-n}) (n+1)\\
&=p^{l-1}+p^l(1-p^{-1})\left(l+1+p^l(1-p^{-1})\sum_{n = 0}^{l-1} \frac{n+1}{p^n}\right).
\end{align*}
The identity (\ref{prime_id}) now follows since
\begin{align*}
\sum_{n=0}^{l-1} \frac{n+1}{p^n} = \frac{1-p^{-l-1}}{(1-p^{-1})^2}-\frac{l+1}{p^l(1-p^{-1})}.
\end{align*}

From the first part we deduce using (\ref{r_cop}) that
\begin{align*}
\sum_{r\le K} \sum_{d\mid(q,r)}\sum_{c\mid \frac{q}{d}}dc \mu\left(\frac{q}{dc}\right) &= \frac{K}{q}\sum_{\substack{k \mid q\\ k \le K}} \phi\left(\frac{q}{k}\right) \sum_{d\mid k}\sum_{c\mid \frac{q}{d}}dc \mu\left(\frac{q}{dc}\right)+O(q^{1+\varepsilon})\\
&=Kq-\frac{K}{q}\sum_{\substack{k \mid q\\ k > K}}\phi\left(\frac{q}{k}\right) \sum_{d\mid k}d\phi\left(\frac{q}{d}\right)+O(q^{1+\varepsilon})\\
&=Kq+O(q^{1+\varepsilon}).
\end{align*}
\end{proof}

\section{Average Results}
From \cite{friedlander85} we know how to count elements of an arithmetic progression in a given interval:
\begin{lemma}\label{friedlander}
Let $a < b$, $r \in \Z$ and $H \in \N$ with $H < q$. Then
\begin{align*}
\sum_{\substack{a < m \le b\\ m \equiv r (q)}} 1 = \frac{b-a}{q} + \sum_{1 \le \vert h \vert \le H} C_{a,b}(h)e(-hr/q)+O\left(\theta_{H}((a-r)/q) + \theta_{H}((b-r)/q) \right),
\end{align*}
where
\begin{align*}
\theta_H(s) = \begin{cases} \min(1,1/(H \Vert s\Vert)) & {\textit{if}} \ s \notin \Z\\ 1 & {\textit{if}} \ s \in \Z \end{cases}
\end{align*}
and
\begin{align*}
C_{a,b}(n) = \frac{e(bn/q) -e(an/q)}{2\pi i n}.
\end{align*}
\end{lemma}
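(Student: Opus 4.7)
The plan is to reduce the count to a difference of sawtooth values and then apply a truncated Fourier expansion of the sawtooth function. I begin by writing the left-hand side as a difference of floors,
\begin{align*}
\sum_{\substack{a < m \le b\\ m \equiv r (q)}} 1 = \left\lfloor \frac{b-r}{q}\right\rfloor - \left\lfloor \frac{a-r}{q}\right\rfloor,
\end{align*}
and then use the identity $\lfloor x\rfloor = x - 1/2 - \psi(x)$, where $\psi(x) = \{x\} - 1/2$ for $x \notin \Z$ and $\psi(x) = 0$ for $x \in \Z$. The linear contributions $x$ combine to produce the main term $(b-a)/q$, and the constants $-1/2$ cancel in the difference, leaving $\psi((a-r)/q) - \psi((b-r)/q)$.

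Next I invoke the classical truncated Fourier expansion of the sawtooth: for every real $x$ and every $H \in \N$,
\begin{align*}
\psi(x) = -\sum_{1 \le \vert h\vert \le H} \frac{e(hx)}{2\pi i h} + O(\theta_H(x)),
\end{align*}
with absolute implied constant. For $x \notin \Z$ this is obtained by a partial-summation argument comparing the symmetric truncated sum with the pointwise Fourier series of $\psi$ and bounding the tails geometrically to obtain the savings $\min(1,1/(H\Vert x\Vert))$; for $x \in \Z$ both sides vanish identically and the statement is trivial with the convention $\theta_H(x) = 1$.

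Substituting these expansions at $x = (b-r)/q$ and $x = (a-r)/q$, the oscillatory contribution becomes
\begin{align*}
\sum_{1 \le \vert h\vert \le H} \frac{e(h(b-r)/q) - e(h(a-r)/q)}{2\pi i h} = \sum_{1 \le \vert h\vert \le H} C_{a,b}(h)\, e(-hr/q)
\end{align*}
after factoring out $e(-hr/q) = e(-hr/q)$ from each summand, which matches the main oscillatory term in the statement exactly, while the two sawtooth errors combine into $O(\theta_H((a-r)/q) + \theta_H((b-r)/q))$. The only genuinely technical step, and where I expect the bulk of the work to lie, is establishing the truncated Fourier estimate with the precise $\theta_H$ shape: the difficulty is concentrated near integer $x$, where the individual Fourier terms are nearly in phase, and one has to exploit the \emph{symmetric} truncation $\vert h\vert \le H$ together with summation by parts against the decaying factor $1/h$ in order to extract the extra factor $1/H$ rather than merely $1/\Vert x\Vert$. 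This is standard (Vinogradov, and the version in \cite{friedlander85}) but requires the careful pairing of $h$ and $-h$ that gives $\sin$-kernels whose $L^\infty$ norms are controlled by $\min(1,1/(H\Vert x\Vert))$.
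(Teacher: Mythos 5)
The paper does not prove this lemma at all; it is quoted directly from Friedlander--Iwaniec \cite{friedlander85}, so there is no in-paper argument to compare against. Your proof is the standard derivation of that result and is correct: the reduction to $\lfloor (b-r)/q\rfloor - \lfloor (a-r)/q\rfloor$ plus the symmetrically truncated Fourier expansion of the sawtooth with error $O(\min(1,1/(H\Vert x\Vert)))$ yields exactly the stated formula, with the phase factor $e(-hr/q)$ splitting off as you indicate. The only nitpick is that the identity $\lfloor x\rfloor = x - 1/2 - \psi(x)$ is off by $1/2$ at integer $x$ under your convention $\psi(x)=0$ there; this is harmless because $\theta_H(x)=1$ at integers absorbs the discrepancy, but it is worth stating explicitly.
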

Note that $H_1 \le H_2$ implies that $\theta_{H_1}(s) \ge \theta_{H_2}(s)$. It will be convenient to set
\begin{align*}
E_H(a,b,r,q) = \theta_{H}((a-r)/q) + \theta_{H}((b-r)/q).
\end{align*}
In \cite{banks05} the following lemma was proved:
\begin{lemma}\label{banks}
Let $(r,q) = 1$, $H$ as in Lemma \ref{friedlander} and $t \in \R$. Then
\begin{align*}
\sum_{c \in \Z_q} \theta_{H}((t-rc)/q) \ll \frac{q^{1+\epsilon}}{H}
\end{align*}
for any $\epsilon > 0$, where the constant implied depends at most on $\epsilon$.
\end{lemma}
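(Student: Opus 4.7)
The plan is to exploit the coprimality $(r,q)=1$: since multiplication by $r$ is a bijection of $\Z_q$, we may reindex to obtain
\begin{align*}
\sum_{c\in\Z_q}\theta_H((t-rc)/q) = \sum_{k\in\Z_q}\theta_H((t-k)/q).
\end{align*}
The point of this reduction is that the new sum no longer depends on $r$, and the fractional parts of $(t-k)/q$ for $k=0,1,\ldots,q-1$ form $q$ points on $\R/\Z$ equally spaced at distance $1/q$.

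Next, I would organize these $q$ equally-spaced points according to their distance to $\Z$. For each integer $j$ with $0\le j\le q/2$, the set $\{s\in\R/\Z : \Vert s\Vert\in[j/q,(j+1)/q)\}$ has total arc length $2/q$, so by the uniform spacing only $O(1)$ of the $k\in\Z_q$ land in this bucket. At most one value of $k$ can satisfy $(t-k)/q\in\Z$, contributing $1$ to the sum.

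I would then split the remaining sum according to the size of $\Vert(t-k)/q\Vert$. The points with $\Vert(t-k)/q\Vert<1/H$ lie in an arc of length $2/H$, so there are $O(q/H+1)=O(q/H)$ of them (using $H<q$), each contributing at most $1$ since $\theta_H\le 1$. For the remaining points I would use $\theta_H(s)\le 1/(H\Vert s\Vert)$ and sum over the $j$-buckets with $j/q\ge 1/H$, obtaining
\begin{align*}
\sum_{\Vert(t-k)/q\Vert\ge 1/H}\theta_H((t-k)/q) \ll \sum_{\lceil q/H\rceil\le j\le \lfloor q/2\rfloor}\frac{q}{Hj} \ll \frac{q\log q}{H}.
\end{align*}
Combining the two pieces gives a bound of $O((q\log q)/H)=O(q^{1+\epsilon}/H)$, as claimed.

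The only real subtlety is the geometric counting step: verifying that each bucket $[j/q,(j+1)/q)$ of distances-to-$\Z$ contains only $O(1)$ of the reindexed points. This is where the uniform spacing $1/q$ and the hypothesis $H<q$ are essential, the latter ensuring $q/H\ge 1$ so that the harmonic sum above has a nonempty range. Everything else is a direct application of the definition of $\theta_H$.
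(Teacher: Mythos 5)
Your proof is correct. Note that the paper does not actually prove this lemma: it is quoted verbatim from the cited work of Banks, Heath-Brown and Shparlinski, so there is no in-paper argument to compare against. Your argument --- reindex $rc \bmod q$ over $\Z_q$ using $(r,q)=1$ and the fact that $\theta_H(s)$ depends only on $\Vert s\Vert$, then bound $\sum_k \min(1,1/(H\Vert (t-k)/q\Vert))$ by splitting at $\Vert\cdot\Vert < 1/H$ and summing $q/(Hj)$ over the remaining $1/q$-buckets --- is the standard and essentially unique elementary route to the bound $q\log q/H \ll q^{1+\epsilon}/H$, and all the boundary counts (at most $O(1)$ points per bucket, $O(q/H)$ points in the central arc since $H<q$) check out.
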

We will use these results to prove a technical lemma. Define
\begin{align*}
I(m,M,\alpha,\beta,b,t) = \left\{ x \in \N \mid \max(m,(\alpha+t)/b) \le x \le  \min(M,(\beta+t)/b) \right\}
\end{align*}
and
\begin{align*}
J(m,M,\alpha,\beta,b,t) = \left\{ x \in \N \mid \max(m,(\alpha+t)/b) < x \le  \min(M,(\beta+t)/b) \right\}.
\end{align*}
\begin{lemma}\label{error_lemma_i}
Let $N \ge 1$. Then
\begin{align*}
\sum_{\substack{a \le N_1\\ b \le N_2}} &\sum_{c \in \Z} \# \left\{ x\in I(1,N_3,a,N_4a,b,r+cq) \mid xb \equiv r + cq (a)\right\}\\
&=\sum_{\substack{a \le N_1\\ b \le N_2}}\sum_{\substack{c \in \Z\\ (a,b) \mid cq+r }} \frac{(a,b)}{a}\int_0^{N_3}1_{\{t \in \R \mid (r+cq)/b \le t \le (N_4 a + r + cq)/b\}}(t)dt + O\left(\frac{N^{7/2+\epsilon}}{q}\right)
\end{align*}
for $N_i \asymp N$, $q\le N^{2-\delta}$ and $\vert r \vert \ll N^4$. In the first sum $a,b \in \N$.
\end{lemma}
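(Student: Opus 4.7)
The plan is to apply Lemma \ref{friedlander} to the inner count for fixed $a$, $b$, $c$, and then bound the resulting errors summed over $c$, and finally over $a$, $b$. Fix $a \le N_1$ and $b \le N_2$, and let $d = (a,b)$. The congruence $xb \equiv r + cq \pmod{a}$ has solutions iff $d \mid r + cq$, in which case the solution set is the arithmetic progression $x \equiv x_0(c) \pmod{a/d}$ with $x_0(c)$ depending linearly on $(r+cq)/d$. For such $c$ I would apply Lemma \ref{friedlander} with this AP inside the interval $[A(c),B(c)] := [\max(1,(a+r+cq)/b),\min(N_3,(N_4a+r+cq)/b)]$, for a parameter $H \le a/d$ to be chosen. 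This produces a main term $\frac{d}{a}$ times the length of the interval, plus a Fourier sum over $0<|h|\le H$, plus a theta error $E_H(A(c),B(c),x_0(c),a/d)$.

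The main term, summed over valid $c$, matches the right-hand side of the lemma except that the LHS uses the lower cut-off $\max(1,(a+r+cq)/b)$ while the RHS integrand uses $\max(0,(r+cq)/b)$. The per-triple discrepancy is $O(d/a+d/b)$; since the effective $c$-range (imposing $d\mid r+cq$) has size $O(N^2(d,q)/(qd))+O(1)$, summing over $a,b$ bounds the total main-term discrepancy by $O(N^{3+\epsilon}/q)$, well within the claimed error.

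For the theta terms, as $c$ runs over valid values the arguments $(A(c)-x_0(c))/(a/d)$ and $(B(c)-x_0(c))/(a/d)$ form arithmetic progressions modulo $1$ (with steps computable from $a$, $b$, $q$, and the inverse of $b/d$ modulo $a/d$). Lemma \ref{banks} then yields $\sum_c E_H \ll (a/d)^{1+\epsilon}/H$ per pair $(a,b)$, times a factor for the number of full periods of this AP that fit in the effective $c$-range. For the Fourier terms, $|C^{(c)}(h)| \ll \min(|I(c)|,1/|h|)$, and the phase $-hx_0(c)/(a/d)$ is linear in $c$, so the inner $c$-sum collapses to a geometric series providing extra cancellation. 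Choosing $H$ to balance these contributions (roughly $H$ of order $(aN/q)^{1/2}$ up to logs) and then summing over $a \le N_1$ and $b \le N_2$, using standard divisor-sum estimates, should produce a total error of $O(N^{7/2+\epsilon}/q)$.

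The main obstacle is controlling the Fourier error: $C^{(c)}(h)$ itself depends on $c$ through the interval endpoints $A(c)$ and $B(c)$, so the inner $c$-sum is not a pure exponential sum but one with a slowly varying amplitude, and partial summation or an extra splitting in $c$ may be required. One must also handle separately the boundary cases where the $\max$ or $\min$ in the definition of $I(c)$ switches from one argument to the other, since the ``linear in $c$'' structure breaks there; however these boundary $c$'s number only $O(1)$ per $(a,b)$ and contribute at most $O(N^2)$ to the total error, which is acceptable.
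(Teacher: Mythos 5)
There is a genuine gap in the treatment of the error terms. Your proposal extracts cancellation only from the sum over $c$ for each fixed pair $(a,b)$: the $\theta_H$-terms via Lemma \ref{banks} applied to the progression in $c$, and the Fourier terms via a geometric series in $c$. But for fixed $(a,b)$ the admissible $c$'s lie in an interval of length $O(N^2/q)$, while the relevant modulus (after the reciprocity you implicitly use to make the argument linear in $c$) is of size $\asymp a$ or $\asymp b$, i.e.\ up to $N$. Since the lemma must hold for $q$ up to $N^{2-\delta}$ (and the intended application needs $q\ge N^{3/2+\delta}$), the $c$-range typically covers far less than one period, so Lemma \ref{banks} gives nothing beyond the full-period bound $(a/d)^{1+\epsilon}/H$ per pair; summed over $a,b\le N$ this is $\gg N^{3}/H$, and the constraint $H<a/d\le N$ from Lemma \ref{friedlander} forces an error $\gg N^{2}$, which exceeds the target $N^{7/2}/q\le N^{3/2+\delta}$ as soon as $q>N^{3/2}$. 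The obstruction is structural rather than a matter of bookkeeping: there are $\asymp N^4/q$ contributing triples $(a,b,c)$, and even perfect square-root cancellation in the $c$-variable alone would leave an error of order $(N^4/q)\cdot(N^2/q)^{-1/2}=N^3/\sqrt{q}$, which is larger than $N^{7/2}/q$ for every $q>N$. One must therefore also extract cancellation from the divisor variables.

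That is precisely the ingredient missing from your proposal and supplied in the paper's proof. Writing $k=(a,b)$, $a=k\alpha$, $b=k\beta$, the paper fixes $c$, $k$, $\alpha$ and sums over $\beta$, whose range $\le N_2/k$ is comparable to the modulus $\alpha$: the $\theta_H$-terms are handled by the reciprocity $\overline{\beta}/\alpha\equiv 1/(\alpha\beta)-\overline{\alpha}/\beta \ (\mathrm{mod}\ 1)$ together with Lemma \ref{banks}, and the Fourier terms become incomplete Kloosterman sums $\sum_\beta e(-h(r+cq)\overline{\beta}/(\alpha k))$, which are completed and estimated by the Weil bound, yielding the saving $\alpha^{-1/2+\epsilon}(\alpha,h(r+cq)/k)^{1/2}$ that produces the exponent $7/2$. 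The Weil bound appears nowhere in your argument, and without it (or some substitute giving genuine cancellation in the $a$- or $b$-aspect) the stated error term is out of reach. A secondary omission: to run Lemma \ref{friedlander} with $H\asymp N^{1/2}$ one must first discard, by a trivial count contributing $O(N^{7/2+\epsilon}/q)$, the triples with $\min(a/(a,r+cq),\,b/(b,r+cq))<N^{1/2}$, since the lemma requires $H$ smaller than the modulus; your choice $H\asymp(aN/q)^{1/2}$ does not resolve this and in any case leads to a per-pair $\theta_H$-error of order $\sqrt{q}$, hence a total of order $N^2\sqrt{q}$, again too large for $q>N$.
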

\begin{proof}
We see that
\begin{align*}
\sum_{\substack{a \le N_1\\ b \le N_2}} &\sum_{c \in \Z} \# \left\{ x\in I(1,N_3,a,N_4a,b,r+cq) \mid xb \equiv r + cq (a)\right\}\\
&=\sum_{\substack{a \le N_1\\ b \le N_2}} \sum_{c \in \Z} \# \left\{ x\in J(0,N_3,0,N_4a,b,r+cq) \mid xb \equiv r + cq (a)\right\}.
\end{align*}
Note that we can assume that $c \in \widetilde{I} = [-(M^2+r)/q,(M^2-r)/q]$ since the sum is zero otherwise. Note that the length of $\widetilde{I}$ is $2M^2/q$. We now apply Lemma \ref{friedlander} and get
\begin{align*}
\sum_{\substack{a \le N_1\\ b \le N_2}}&\sum_{c \in \Z} \# \left\{ x\in J(0,N_3,0,N_4a,b,r+cq) \mid xb \equiv r + cq (a)\right\}\\
&=\sum_{\substack{a \le N_1\\ b \le N_2}}\sum_{\substack{c \in \Z\\ (a,b) \mid cq+r }} \frac{(a,b)}{a}\int_0^{N_3}1_{\{t \in \R \mid (r+cq)/b \le t \le (N_4 a + r + cq)/b\}}(t)dt + O(\mathcal{F} + \mathcal{E}_1 + \mathcal{E}_2)
\end{align*}
where the error term $\mathcal{E}_1$ accounts for the contribution from $E_H$ (we choose $H = [N^\lambda]-1$) for $a$, $b$ and $c$ such that
\begin{align}\label{ab_bound}
\min\left(\frac{a}{(a,r+cq)}, \frac{b}{(b,r+cq)} \right) > N^\lambda,
\end{align}
$\mathcal{F}$ is the exponential sum (we set $M = \max N_i$)
\begin{align*}
\sum_{c \in \widetilde{I}\cap \Z} \sum_{k \mid cq+r} \biggl\vert \sum_{N^\lambda \le \alpha \le \frac{N_1}{k}}\sum_{\substack{N^\lambda \le \beta \le \frac{N_2}{k}\\ (\alpha,\beta)=1}}\sum_{1 \le \vert h \vert \le H} C_{\max\left(0,\frac{r+cq}{\beta k}\right),\min\left(N_3,\frac{N_4\alpha k +r + cq}{\beta k}\right)}(h)e\left(\frac{-h(r+cq)\overline{\beta}}{\alpha k}\right)\biggr\vert,
\end{align*}
and $\mathcal{E}_2$ accounts for the the error coming from $a$, $b$ and $c$ not satisfying (\ref{ab_bound}) and the exponential sum terms with
\begin{align*}
\frac{\min(a,b)}{(a,b)} < N^\lambda.
\end{align*}
In particular these $a$, $b$ and $c$ satisfy
\begin{align*}
\min\left( \frac{a}{(a,r+cq)},\frac{b}{(b,r+cq)}\right) < N^\lambda.
\end{align*}
Thus $\mathcal{E}_2$ is estimated by
\begin{align*}
\mathcal{E}_2 &\ll N^\epsilon \sum_{c \in \widetilde{I}\cap \Z}\sum_{k
  \mid r+cq}\\
  &\phantom{=}\# \left\{(a,b)\in \N^2 \biggl| (a,b) = k, \ a \le N_1, \ b
  \le N_2, \ \min\left(\frac{a}{(a,r+cq)}, \frac{a}{(a,r+cq)}\right) < N^\lambda
  \right\}\\
&\ll N^\epsilon \sum_{c \in \widetilde{I}\cap \Z}\sum_{k
  \mid r+cq} \biggl(\frac{N_2}{k} \sum_{\delta_1 \mid \frac{r+cq}{k}} \#
  \{\alpha \in \N \mid \alpha \le N^\lambda \delta_1, \ (\alpha,
  (r+cq)/k)=\delta_1  \}\\
&\phantom{=}+\frac{N_1}{k} \sum_{\delta_2 \mid \frac{r+cq}{k}} \#
  \{\beta \in \N \mid \beta \le N^\lambda \delta_2, \ (\beta,
  (r+cq)/k)=\delta_2 \} \biggr)\\
&\ll \frac{N^{3+\lambda+2\epsilon}}{q}.
\end{align*}

We see that the error term $\mathcal{E}_1$ is at most of order 
\begin{align*}
\sum_{c \in \widetilde{I}\cap \Z} \sum_{k \mid cq+r} \sum_{\delta_1,\delta_2 \mid \frac{r+cq}{k}}
\sum_{(\alpha,\beta )\in \mathcal{A}} E_{H}\left(\max\left(0,\frac{r+cq}{\beta k}\right),\min\left(N_3,\frac{N_4\alpha k +r + cq}{\beta k}\right),\frac{\overline{\beta}(r+cq)}{k},\alpha\right),
\end{align*}
where
\begin{align*}
\mathcal{A} = \left\{(\alpha,\beta) \in \N^2 \biggl| \alpha \in I_1, \ \beta \in I_2, \ (\alpha,\beta) = 1, \ \left(\alpha,\frac{r+cq}{k }\right)= \delta_1, \ \left(\beta,\frac{r+cq}{k}\right) = \delta_2 \right\},
\end{align*}
$I_1 = [\delta_1 N^\lambda, N_1/k]$ and $I_2 = [\delta_2 N^\lambda , N_2/k]$. First we consider 
\begin{align*}
\sum_{(\alpha,\beta) \in \mathcal{A}} \theta_H\left(\frac{\overline{\beta}(r+cq)}{\alpha k}\right).
\end{align*}
From Lemma \ref{banks} it follows that
\begin{align*}
\sum_{\substack{\beta \in I_2\\ (\alpha,\beta) \in \mathcal{A}}} \theta_{H}\left(\frac{\overline{\beta}(r+cq)}{\alpha k}\right) \ll N^{-\lambda}\left(\frac{N_2\left(\alpha,\frac{r+cq}{k}\right)}{\alpha k}+1\right) \left(\frac{\alpha}{\left(\alpha,\frac{r+cq}{k}\right)}\right)^{1+\epsilon},
\end{align*}
and 
\begin{align*}
\sum_{\alpha \le \frac{N_1}{k}} \left(\frac{N_2\left(\alpha,\frac{r+cq}{k}\right)}{\alpha k}+1\right) \left(\frac{\alpha}{\left(\alpha,\frac{r+cq}{k}\right)}\right)^{1+\epsilon} & \ll N^{2+2\epsilon}.
\end{align*}
Thus it follows that
\begin{align*}
\sum_{c \in \widetilde{I}\cap \Z} \sum_{k \mid cq+r} \sum_{\delta_1,\delta_2 \mid \frac{r+cq}{k}} \sum_{(\alpha,\beta) \in \mathcal{A}} \theta_H\left(\frac{\overline{\beta}(r+cq)}{\alpha k}\right) \ll \frac{N^{4-\lambda+3\epsilon}}{q}.
\end{align*}
We now consider 
\begin{align*}
\sum_{(\alpha,\beta) \in \mathcal{A}} \theta_H\left(\frac{(\overline{\beta}-1/\beta)(r+cq)}{\alpha k}\right) = \sum_{(\alpha,\beta) \in \mathcal{A}}\theta_H\left(\frac{\overline{\alpha}(r+cq)}{\beta k}\right),
\end{align*}
where $\overline{\alpha}$ is the inverse of $\alpha$ modulo $\beta$. Again we apply Lemma \ref{banks} and obtain
\begin{align*}
\sum_{c \in \widetilde{I}\cap \Z} \sum_{k \mid cq+r}\sum_{(\alpha,\beta) \in \mathcal{A}} \theta_H\left(\frac{(\overline{\beta}-1/\beta)(r+cq)}{\alpha k}\right) \ll \frac{N^{4-\lambda+\epsilon}}{q}.
\end{align*}
The remaining terms of similar type are estimated in the same way. 

We now consider the exponential sum
\begin{align*}
\sum_{N^\lambda < \alpha \le \frac{N_1}{k}}\sum_{\substack{N^\lambda < \beta \le \frac{N_2}{k}\\ (\alpha,\beta)=1}}\sum_{1 \le \vert h \vert \le H} C_{\max\left(0,\frac{r+cq}{\beta k}\right),\min\left(N_3,\frac{N_4\alpha k +r + cq}{\beta k}\right)}(h)e\left(\frac{-h(r+cq)\overline{\beta}}{\alpha k}\right).
\end{align*}
First we look at 
\begin{align*}
\sum_{1 \le \vert h \vert \le H} \frac{1}{2\pi i h}\sum_{N^\lambda < \alpha \le \frac{N_1}{k}}\sum_{\substack{N^\lambda < \beta \le \frac{N_2}{k}\\ (\alpha,\beta)=1}} e\left(\frac{-h(r+cq)\overline{\beta}}{\alpha k}\right).
\end{align*}
Using standard exponential sum techniques we rewrite the inner sum (see e.g. \cite{banks05})
\begin{align*}
\sum_{\substack{N^\lambda < \beta \le \frac{N_2}{k}\\ (\alpha,\beta)=1}} e\left(\frac{-h(r+cq)\overline{\beta}}{\alpha k}\right) &= \sum_{\substack{\beta = 0\\ (\alpha,\beta)=1}}^{\alpha -1}e\left(\frac{-h(r+cq)\overline{\beta}}{\alpha k} \right) \sum_{N^\lambda < \zeta \le \frac{N_2}{k}} \frac{1}{\alpha} \sum_{\xi = -\lfloor \frac{\alpha-1}{2}\rfloor}^{\lfloor \frac{\alpha}{2}\rfloor} e\left(\frac{\xi(\beta-\zeta)}{\alpha} \right)\\
&=\frac{1}{\alpha}\sum_{\xi = -\lfloor \frac{\alpha-1}{2}\rfloor}^{\lfloor \frac{\alpha}{2}\rfloor}\sum_{\substack{\beta = 0\\ (\alpha,\beta)=1}}^{\alpha -1} e\left(\frac{\xi\beta-h\overline{\beta}(r+cq)/k}{\alpha}\right)\sum_{N^\lambda < \zeta \le \frac{N_2}{k}}e\left(-\frac{\xi \zeta}{\alpha} \right).
\end{align*}
The last sum is just a Weyl sum and is easily estimated (see e.g. \cite{iwaniec04} Section 8.2) by
\begin{align*}
\biggl\vert\sum_{N^\lambda < \zeta \le \frac{N_2}{k}}e\left(-\frac{\xi \zeta}{\alpha} \right)\biggr\vert \le \min\left( N_2,\left\vert \frac{\alpha}{\xi}\right\vert \right).
\end{align*}
Thus using the Weil bound for Kloosterman sums we obtain
\begin{align*}
\biggl\vert \sum_{\substack{N^\lambda < \beta \le \frac{N_2}{k}\\ (\alpha,\beta)=1}}& e\left(\frac{-h(r+cq)\overline{\beta}}{\alpha k}\right) \biggr\vert\\
 &\le \frac{1}{\alpha}\sum_{\xi = -\lfloor \frac{\alpha-1}{2}\rfloor}^{\lfloor \frac{\alpha}{2}\rfloor}\biggl\vert \sum_{\substack{\beta = 0\\ (\alpha,\beta)=1}}^{\alpha -1} e\left(\frac{\xi\beta-h\overline{\beta}(r+cq)/k}{\alpha}\right)\biggr\vert \biggl\vert\sum_{N^\lambda < \zeta \le \frac{N_2}{k}}e\left(-\frac{\xi \zeta}{\alpha} \right)\biggr\vert\\
&\ll \alpha^{-1/2+\epsilon}\left(\alpha, h\frac{r+cq}{k} \right)^{1/2}\biggl(N + \sum_{\substack{\vert \xi \vert \le \frac{\alpha}{2}\\ \xi \ne 0}} \left\vert \frac{\alpha}{\xi} \right\vert\biggr)\\
&\ll N^{1+2\epsilon}\alpha^{-1/2}\left(\alpha, h\frac{r+cq}{k} \right)^{1/2}
\end{align*}
and hence
\begin{align*}
\sum_{\alpha \le N_1} \alpha^{-1/2}\left(\alpha, h\frac{r+cq}{k} \right)^{1/2} &\le \sum_{d \mid h\frac{r+cq}{k}} \sqrt{d} \sum_{j \le \frac{N_1}{d}}(dj)^{-1/2} \ll \sum_{\substack{d \mid h\frac{r+cq}{k}\\ d \le N_1}}\sqrt{N_1/d} \ll N^{1/2+\epsilon}.
\end{align*}
The contribution to $\mathcal{F}$ is
\begin{align*}
\sum_{c \in \widetilde{I}\cap \Z} \sum_{k \mid cq+r} \sum_{1 \le h \le N^\lambda} \frac{N^{3/2+3\epsilon}}{h} \ll \frac{N^{7/2+5 \epsilon}}{q}.
\end{align*}
The remaining terms are handled in a similar way. Choosing $\lambda = 1/2$ yields the desired result.
\end{proof}

We need a slightly different version of the previous lemma as well
\begin{lemma}\label{error_lemma_ii}
Let $N \ge 1$ and $d_1,d_2 \mid k$. Then
\begin{align*}
\sum_{\substack{a_i \le M/d_i\\ (a_i,q/d_i) = 1}}&\sum_{c \in \Z} \# \left\{ x\in I(1,Nd_1/k,a_2,Na_2d_2,ka_1,cq) \mid xa_1 \equiv cq/k (a_2), \ (x, q/k) = 1\right\}\\
&=\frac{k\phi(q/k)}{q}\sum_{\substack{a_i \le M/d_i\\ (a_i,q/d_i) = 1}}\sum_{\substack{c \in \Z\\ (a_1,a_2) \mid c}} \frac{(a_1,a_2)}{a_2}\int_0^{N d_1/k}1_{\{t \in \R \mid cq/(ka_1) \le t \le (N a_2d_2 + cq)/(ka_1)\}}(t)dt+\\
&\phantom{==} O\left(\frac{N^{7/2+\epsilon}}{q}\right)
\end{align*}
uniformly for $M \asymp N$, $q\le N^{2-\delta}$ and $k \mid q$.
\end{lemma}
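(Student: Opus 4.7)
The plan is to mimic the proof of Lemma \ref{error_lemma_i} closely, with an additional Möbius inversion to dispose of the coprimality constraint $(x, q/k) = 1$. First I would write
\begin{align*}
1_{(x,q/k) = 1} = \sum_{\substack{e \mid q/k \\ e \mid x}} \mu(e)
\end{align*}
and exchange the order of summation, so that the left side of the lemma becomes $\sum_{e \mid q/k} \mu(e)$ times a count of $x \in I(1,Nd_1/k,a_2,Na_2d_2,ka_1,cq)$ satisfying both the original congruence $xa_1 \equiv cq/k \pmod{a_2}$ and the additional divisibility $e \mid x$. Since $d_i \mid k$, we have $q/k \mid q/d_i$, so the assumption $(a_i, q/d_i) = 1$ forces $(a_i, q/k) = 1$, and in particular $(a_1, e) = (a_2, e) = 1$ for every $e \mid q/k$. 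By the Chinese Remainder Theorem the two conditions on $x$ collapse to a single congruence modulo $a_2 e$, solvable precisely when $(a_1, a_2) \mid c$ (using that $((a_1,a_2), q/k) = 1$ turns the divisibility $(a_1, a_2) \mid cq/k$ into $(a_1, a_2) \mid c$).

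For each fixed $e \mid q/k$ I would then run the analysis of the proof of Lemma \ref{error_lemma_i} verbatim, substituting $x = ey$ so that $y$ satisfies $ya_1 \equiv \overline{e}cq/k \pmod{a_2}$ in an interval of length at most $Nd_1/(ke)$. Lemma \ref{friedlander} yields the main term and a remainder, Lemma \ref{banks} controls the $\theta_H$-tails (producing analogues of $\mathcal{E}_1$ and $\mathcal{E}_2$), and the Weil bound for Kloosterman sums handles the exponential sum $\mathcal{F}$ exactly as in the original proof. The coprimality restrictions $(a_i, q/d_i) = 1$ only cut down the ranges of summation and do not affect the Kloosterman estimates. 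The main term that emerges for each $e$ is
\begin{align*}
\sum_{\substack{a_i \le M/d_i \\ (a_i, q/d_i) = 1}} \sum_{\substack{c \in \Z \\ (a_1, a_2) \mid c}} \frac{(a_1, a_2)}{a_2 e} \int_0^{Nd_1/k}1_{\{t \in \R \mid cq/(ka_1) \le t \le (Na_2d_2 + cq)/(ka_1)\}}(t)\,dt,
\end{align*}
with an error of order $O(N^{7/2+\epsilon}/q)$ uniformly in $e$.

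Summing over $e \mid q/k$ with the weight $\mu(e)$, the Möbius identity
\begin{align*}
\sum_{e \mid q/k} \frac{\mu(e)}{e} = \frac{\phi(q/k)}{q/k} = \frac{k\phi(q/k)}{q}
\end{align*}
produces precisely the prefactor appearing on the right-hand side of the lemma. The error terms, summed over the $\tau(q/k) \ll q^\epsilon$ divisors of $q/k$, contribute at most $q^\epsilon \cdot O(N^{7/2+\epsilon}/q) = O(N^{7/2+\epsilon'}/q)$, which is absorbed into the stated bound since $q \le N^{2-\delta}$.

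The main obstacle I anticipate is verifying that the Fourier/Kloosterman estimates from the proof of Lemma \ref{error_lemma_i} remain of order $O(N^{7/2+\epsilon}/q)$ uniformly in $e$, with $e$ possibly as large as $q/k$, and under the sparser ranges induced by the coprimality conditions $(a_i, q/d_i) = 1$. Since the exponential-sum bounds depend only on the sizes of the summation ranges (which, after the substitution $x = ey$, remain bounded by the same quantities, only smaller), and the Weil bound is uniform in the modulus, this reduces to careful bookkeeping; but one must check term by term that no extra loss is incurred in passing from the original setting to modulus $a_2 e$ and that the adjusted range for $c$ behaves like the $\widetilde{I}$ of the original proof.
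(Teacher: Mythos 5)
Your proposal is correct and follows essentially the same route as the paper: the paper likewise removes the condition $(x,q/k)=1$ by a M\"obius sum over divisors of $q/k$ (exploiting that $q/k \mid q/d_i$ forces $(a_i,q/k)=1$, so the divisibility merges with the congruence modulo $a_2$ into a single congruence with enlarged modulus), and then reruns the Friedlander--Iwaniec/Banks/Weil machinery of Lemma \ref{error_lemma_i}, with the prefactor $k\phi(q/k)/q$ arising exactly as in your computation and the $\tau(q/k)\ll q^{\epsilon}$ divisor sum absorbed into the error. The uniformity-in-$e$ bookkeeping you flag is the same (sketched) verification the paper performs for its sums over $l\mid q/k$ and $\delta$.
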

\begin{proof}
As in the proof of Lemma \ref{error_lemma_i} we see that
\begin{align*}
&\sum_{\substack{a_i \le M/d_i\\ (a_i,q/d_i) = 1}}\sum_{c \in \Z} \# \left\{ x\in I(1,Nd_1/k,a_2,Na_2d_2,ka_1,cq) \mid xa_1 \equiv cq/k (a_2), \ (x, q/k) = 1\right\}\\
&\phantom{===}=\sum_{\substack{a_i \le M/d_i\\ (a_i,q/d_i) = 1}}\sum_{c \in \Z} \# \left\{ x\in J(0,Nd_1/k,0,Na_2d_2,ka_1,cq) \mid xa_1 \equiv cq/k (a_2), \ (x, q/k) = 1\right\}+\\
&\phantom{=====}O\left(\frac{N^{3+\epsilon}}{q}\right).
\end{align*}
Assume $(q,l) = 1$ and $l \mid r$. Then
\begin{align*}
\# \{x \in \Z \mid a < x \le b, \ x \equiv r (q),\ (x,l) = 1 \} &= \# \{c \in \Z \mid (a-r)/q < c \le (b-r)/q,\ (c,l) = 1 \}\\
&= \sum_{d \mid l}\mu(d)\left(\left[\frac{b-r}{qd} \right]-\left[\frac{a-r}{qd} \right] \right)\\
&=\sum_{d \mid l}\mu(d) \# \{x \in \Z \mid a < x \le b, \ x \equiv r (qd)\}.
\end{align*}
Thus 
\begin{align*}
\# &\left\{ x\in J(0,Nd_1/k,0,Na_2d_2,ka_1,cq) \mid xa_1 \equiv cq/k (a_2), \ (x, q/k) = 1\right\} =\\
&\phantom{============}\sum_{d \mid \frac{q}{k}}\mu(d)\# \left\{ x\in J(0,Nd_1/k,0,Na_2d_2,ka_1,cq) \mid xa_1 \equiv cq/k (a_2)\right\}.
\end{align*}
We can now proceed as in the proof of Lemma \ref{error_lemma_ii}. The idea is the same so we only sketch the rest of the proof. 

For the $\theta_H$ sums we are lead to consider (essentially) sums of the form
\begin{align*}
\sum_{\vert c\vert \le \frac{N^2}{q}}\sum_{\delta \mid \frac{q}{k}} \sum_{\substack{\alpha_i \le M/d_i\\ (\alpha_i,q/d_i) = 1}} \theta_H\left(\frac{cq\overline{\alpha}_1}{k\delta \alpha_2} \right)
\end{align*}
and these can be estimated just as in Lemma \ref{error_lemma_ii} since (roughly speaking) the sum above just has fewer terms than the ones considered in the previous lemma (we exclude the terms that do not meet a certain coprimality condition) and all terms are non-negative. We have to be more careful with the exponential sum terms. We consider sums of the form
\begin{align}\label{exp_sum}
\sum_{ c \in I \cap \Z} \sum_{l \mid \frac{q}{k}}\sum_{\delta \mid c} \biggl\vert \sum_{\substack{H < \alpha_i \le \frac{M}{\delta d_i}\\ (\alpha_1,\alpha_2)=(\alpha_i,q/d_i)=1}}\sum_{1 \le \vert h \vert \le H} \frac{1}{h} e\left( -\frac{h cq \overline{\alpha}_1}{kl\delta \alpha_2}\right) \biggr\vert,
\end{align}
where $I$ is an interval of length at most of order $N^2/q$. In the inner sum we replace the coprimality condition $(\alpha_1,q/d_1) = 1$ with a M\"{o}bius sum (setting $\alpha_1 = \gamma \lambda$) and get
\begin{align*}
\sum_{\gamma \mid \frac{q}{d_1}}\mu(\gamma)\sum_{1 \le \vert h \vert \le H} \frac{1}{h} \sum_{\substack{H < \alpha_2 \le \frac{M}{\delta d_2}\\ (\alpha_2,\gamma q/d_2)=1}} \sum_{\substack{\frac{H}{\gamma} < \lambda \le \frac{M}{\gamma \delta d_1}\\ (\lambda,\alpha_2)=1}} e\left( -\frac{\overline{\gamma} h cq \overline{\lambda}}{kl\delta \alpha_2}\right).
\end{align*}
The inner sum is estimated as in Lemma \ref{error_lemma_i} by 
\begin{align*}
\sum_{\substack{H < \alpha_2 \le \frac{M}{\delta d_2}\\ (\alpha_2,\gamma q/d_2)=1}} \biggl\vert \sum_{\substack{\frac{H}{\gamma} < \lambda \le \frac{M}{\gamma \delta d_1}\\ (\lambda,\alpha_2)=1}} e\left( -\frac{\overline{\gamma} h cq \overline{\lambda}}{kl\delta \alpha_2}\right)\biggr\vert \ll N^{1/2+\epsilon} \sum_{\substack{\alpha_2 \le M\\ (\alpha_2,\gamma q/d_2)=1}} \left(\alpha_2,\frac{hcq}{lk\delta}\right)^{1/2} \ll N^{3/2+2\epsilon}.
\end{align*}
Thus we can estimate (\ref{exp_sum}) by $\frac{N^{7/2+3\epsilon}}{q}$.
\end{proof}

Using Lemma \ref{error_lemma_i} we can now prove the following:
\begin{theorem}\label{avg_i}
Let $K,M,N \ge 1$, $q \in \N$ with $M \asymp N$ and $N^{\eta} \le K$ for some $\eta > 0$. Assume also that $q \le N^{2 -\delta}$ for some $\delta > 0$ and $(\rho,q) = 1$. Then
\begin{align*}
\sum_{s=1}^q \left(\sum_{r \le K} \sum_{m\equiv \rho r + s(q)} \tau_{M,N}(m) - \frac{KMN}{q}\right)^2 \ll \frac{K^2N^{\max(7/2,4-\delta,4-\eta)+\epsilon}}{q}
\end{align*}
for any $\varepsilon > 0$. 
%In particular we have
%\begin{align*}
%\sum_{s=1}^q \left(\sum_{r \le K} \sum_{m\equiv \rho r + s(q)} \tau_N^*(m) - \frac{KN^2}{q}\right)^2 = O\left(\frac{K^2N^{4-\delta_2}}{q}\right)
%\end{align*}
%for any $\varepsilon > 0$.
\end{theorem}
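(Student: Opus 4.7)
The plan is to expand the square, apply orthogonality modulo $q$, and reduce the second moment to a count of quadruples that fits the framework of Lemma \ref{error_lemma_i}. Write $T(s)=\sum_{r\le K}\sum_{m\equiv \rho r+s(q)}\tau_{M,N}(m)$. Since $\sum_{s=1}^q T(s)=K\sum_m \tau_{M,N}(m)=KMN$, the left-hand side of the theorem equals $\sum_s T(s)^2-K^2 M^2 N^2/q$. Expanding the square, the sum over $s$ picks out exactly those terms with $m_1-m_2\equiv \rho(r_1-r_2)(q)$, and grouping by $u=r_1-r_2$ gives
\begin{align*}
\sum_{s=1}^q T(s)^2 = \sum_{|u|<K}(K-|u|)\,N(u), \quad N(u)=\#\{a_i\le M,\ b_i\le N:\ a_1b_1\equiv a_2b_2+\rho u(q)\},
\end{align*}
using that the number of $(r_1,r_2)\in[1,K]^2$ with $r_1-r_2=u$ is $K-|u|$.

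For each $u$, I would identify $N(u)$ with the sum estimated by Lemma \ref{error_lemma_i} with parameters $N_1=N_3=M$, $N_2=N_4=N$ and $r=\rho u$. Unwinding the definition of the interval $I$ together with the congruence $xb\equiv r+cq(a)$, one checks that the lemma's LHS counts exactly the quadruples $(a,b,x,j)$ with $a\le N_1$, $b\le N_2$, $1\le x\le N_3$, $1\le j\le N_4$ and $xb-ja\equiv r(q)$; after renaming $(a,b,x,j)\mapsto(a_2,b_1,a_1,b_2)$ this is $N(u)$. The lemma thus yields $N(u)=\mathrm{MT}(u)+O(N^{7/2+\epsilon}/q)$ uniformly in $u$, and summing the error against $(K-|u|)$ over $|u|<K$ contributes the $K^2 N^{7/2+\epsilon}/q$ piece of the final bound.

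It remains to show $\sum_{|u|<K}(K-|u|)\mathrm{MT}(u)=K^2 M^2 N^2/q+O(K^2 N^{\max(4-\delta,4-\eta)+\epsilon}/q)$. Unwinding the lemma's main term, the integrand reduces to $Na/b$ away from a small boundary region in $[0,M]$, while the $c$-sum, subject to $(a,b)\mid cq+\rho u$, runs over an arithmetic progression of length $O((M+N)/q)$. Grouping by $d=(a,b)$ and applying Lemmas \ref{assymp_lemma} and \ref{Kq_lemma}, one obtains $\mathrm{MT}(u)=M^2N^2/q+O(M^2 N^{2-\delta+\epsilon}/q)$ uniformly in $u\ne 0$ (the hypothesis $q\le N^{2-\delta}$ is used to absorb the boundary terms), which produces the $4-\delta$ exponent after summing against $(K-|u|)$. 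The case $u=0$ needs separate treatment because the ``diagonal'' $a_1b_1=a_2b_2$ contributes $\sum_m \tau_{M,N}(m)^2\ll N^{2+\epsilon}$ on top of the expected $M^2N^2/q$; weighted by the coefficient $K$ and bounded using $K\ge N^\eta$ and $q\le N^2$, this extra contribution is $\ll K N^{2+\epsilon}\le K^2 N^{4-\eta+\epsilon}/q$, producing the $4-\eta$ exponent.

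The main obstacle is the uniform-in-$u$ evaluation $\mathrm{MT}(u)=M^2N^2/q+O(M^2 N^{2-\delta+\epsilon}/q)$. The integral depends on $u$ only through the divisibility condition $((a,b),q)\mid \rho u$ and the endpoint positions $(\rho u+cq)/b\in[0,M]$, so one must stratify by $(a,b)$, handle the boundary corrections arising when one endpoint lies near $0$ or $M$, and verify, via Lemma \ref{Kq_lemma} for the GCD-weighted arithmetic progressions together with $q\le N^{2-\delta}$, that the cumulative discrepancy is at most $N^{-\delta+\epsilon}$ relative to $M^2 N^2/q$.
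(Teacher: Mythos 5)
Your reduction follows the paper's proof exactly: expand the square, use the sum over $s$ to reduce to $\sum_{|u|<K}(K-|u|)N(u)$ with $N(u)$ the quadruple count for $a_1b_1\equiv a_2b_2+\rho u\ (q)$, fix $a_1,a_2$, unfold the congruence via $c$, and apply Lemma \ref{error_lemma_i}. The identification of the lemma's left-hand side with $N(u)$ and the bookkeeping of the $O(N^{7/2+\epsilon}/q)$ errors are correct.

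The gap is in the step you yourself flag as the main obstacle: the claimed uniform evaluation $\mathrm{MT}(u)=M^2N^2/q+O(M^2N^{2-\delta+\epsilon}/q)$ for $u\neq 0$ is false. Carrying out the $c$-sum and the integral in the lemma's main term gives
\begin{align*}
\mathrm{MT}(u)=\frac{M^2}{q}\sum_{\substack{a_1,a_2\le N\\ (a_1,a_2,q)\mid u}}(a_1,a_2,q)+O\bigl(N^{2+\epsilon}\bigr),
\end{align*}
and for $(u,q)=1$ the inner sum equals $\#\{(a_1,a_2):(a_1,a_2,q)=1\}=N^2\prod_{p\mid q}(1-p^{-2})+O(Nq^{\epsilon})$, which for even $q$ is at most $\tfrac{3}{4}N^2+O(Nq^{\epsilon})$: the discrepancy from $N^2$ is of order $N^2$, not $N^{2-\delta}$. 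No care with boundary terms can repair this; the individual main terms genuinely deviate by a constant factor depending on the divisors of $(u,q)$, and these deviations only cancel on average over $u$. The paper therefore never evaluates $\mathrm{MT}(u)$ individually: it evaluates the full weighted sum
\begin{align*}
\sum_{|u|\le K}(K-|u|)\sum_{\substack{a_1,a_2\le N\\ (a_1,a_2,q)\mid u}}(a_1,a_2,q)=K^2N^2+O\bigl(NK(N+K)q^{\epsilon}\bigr)
\end{align*}
directly from Lemma \ref{assymp_lemma} (whose content is precisely this averaged statement, including the weighted version needed for the factor $K-|u|$), and only then multiplies by $M^2/q$. The resulting term $KN^4q^{\epsilon}/q\le K^2N^{4-\eta+\epsilon}/q$ coming from the $O((K+N)q^{\epsilon})$ in that lemma is where the exponent $4-\eta$ actually enters, and your separate diagonal analysis at $u=0$ is subsumed in it. Replacing the per-$u$ evaluation by this averaged one closes the argument.
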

\begin{proof}
We may safely assume that $K \le q$. We see that
\begin{align*}
\sum_{s=1}^q \left(\sum_{r \le K} \sum_{m\equiv \rho r + s(q)} \tau_{M,N}(m)\right)^2 &= \sum_{\substack{r \le K\\ r' \le K}} \sum_{m\equiv m' + \rho (r-r')(q)} \tau_{M,N}(m)\tau_{M,N}(m')\\
&= \sum_{\vert l \vert \le K-1} R_l \sum_{m\equiv m' + l\rho (q)}\tau_{M,N}(m)\tau_{M,N}(m')
\end{align*}
where $R_l = [K]-\vert l \vert$. We consider the innermost sum
\begin{gather}
\begin{split}\label{tau_prod_i}
\sum_{m\equiv m' + l\rho (q)}&\tau_{M,N}(m)\tau_{M,N}(m')=\\
&\# \{(a_1,b_1,a_2,b_2) \in \N^4 \mid a_1,a_2\le N,\ b_1,b_2 \le M, \ a_1b_1 \equiv a_2b_2 + \rho l (q) \}
\end{split}
\end{gather}
We want to ``switch'' the roles of $q$ and $a_2$ and apply Lemma \ref{error_lemma_i}. Note that
\begin{align*}
a_1 b_1 \equiv a_2 b_2 + \rho l(q)
\end{align*}
exactly if there exists $c \in \Z$ such that
\begin{align*}
a_1 b_1  -\rho l -cq = a_2 b_2.
\end{align*}
Now fix $a_1$ and $a_2$. We see that (\ref{tau_prod_i}) is
\begin{align*}
\sum_{\substack{a_1,a_2 \le N\\ c \in \Z}}& \# \left\{ x \in I(1,M,a_2,Ma_2,a_1,\rho l + cq) \Bigm| xa_1 \equiv \rho l + cq (a_2)\right\}\\
&= \sum_{\substack{a_1,a_2 \le N\\ c \in \Z\\ (a_1,a_2)\mid cq + \rho l}}\frac{(a_1,a_2)}{a_2} \int_0^M 1_{\{t \in \R \mid (\rho l+cq)/a_1 \le t \le (\rho l+cq)/a_1+Ma_2/a_1 \}}(t)dt+O\left(\frac{N^{7/2+\epsilon}}{q} \right)\\
&= \sum_{\substack{a_2,a_1 \le N\\ (a_2,a_1,q)\mid l}}\frac{(a_2,a_1)}{a_2}\int_0^M \sum_{\substack{(ta_1 - l\rho)/q-Ma_2/q \le c \le (ta_1 - l\rho)/q\\ (a_2,a_1) \mid cq+\rho l}} 1 dt+O\left(\frac{N^{7/2+\epsilon}}{q} \right)\\
&=M\sum_{\substack{a_2,a_1 \le N\\ (a_2,a_1,q)\mid l}}\frac{(a_2,a_1)}{a_2} \frac{Ma_2(a_2,a_1,q)}{q(a_2,a_1)}+O\left(N^{2+\epsilon} + \frac{N^{7/2+\epsilon}}{q} \right)\\
&=\frac{M^2}{q}\sum_{\substack{a_2,a_1 \le N\\ (a_2,a_1,q)\mid l}}(a_2,a_1,q)+O\left(\frac{N^{\max(7/2,4-\delta)+\epsilon}}{q} \right).
\end{align*}
Again we consider the entire sum and see (using Lemma \ref{assymp_lemma})
\begin{align*}
\sum_{\vert l \vert \le K} R_l \sum_{\substack{a_2,a_1 \le N\\ (a_2,a_1,q)\mid l}}(a_2,a_1,q)
&= \sum_{a_1\le N}\sum_{\vert l \vert \le K} R_l \sum_{\substack{a_2 \le N\\ (a_2,a_1,q)\mid l}}(a_2,a_1,q)\\
&=2[K]\sum_{a_1=1}^N\left(KN+O((K+N)q^\varepsilon)\right) +O(KN^2q^\varepsilon)-\\
&\phantom{===}\sum_{a_1=1}^N\left(K^2N+O(K(K+N)q^\epsilon)\right)\\
&=K^2N^2+O(NK(N+K)q^\varepsilon).
\end{align*}
Thus
\begin{align*}
\sum_{s=1}^q \left(\sum_{r \le K} \sum_{m\equiv \rho r + s (q)} \tau_{M,N}(m) - \frac{KMN}{q}\right)^2 &= \sum_{\substack{r \le K\\ r' \le K}} \sum_{m\equiv m'+\rho (r-r') (q)} \tau_{M,N}(m)\tau_{M,N}(m')-\\
&\phantom{===}\frac{K^2M^2N^2}{q}+O\left(\frac{KN^4 + K^2 N^3}{q} \right)\\
&=O\left(\frac{K^2N^{\max(7/2,4-\delta,4-\eta)+\epsilon}}{q} \right).
\end{align*}
\end{proof}

This also shows that for the individual terms the asymptotics one should expect from Conjecture \ref{main_conj} holds for a subset of $\{s \in \N \mid 1 \le s \le q\}$ of full density in the following sense:
\begin{cor}\label{cor_fd}
Let $\nu > 0$ and assumptions be as in Theorem \ref{avg_i}. Then
\begin{align*}
\#\Big\{s\in \{ 1,\dots, q\} \Bigm| \Big\vert 1-  \frac{q}{KMN} \sum_{r \le K} \sum_{m\equiv \rho r + s(q)} \tau_{M,N}(m)\Big\vert > \nu \Big\} \ll qN^{-\min(\delta,1/2,\eta)+\varepsilon}
\end{align*}
uniformly in $K$, $q$ and $\rho$ for any $\varepsilon > 0$.
\end{cor}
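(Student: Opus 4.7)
The plan is to derive the corollary from Theorem \ref{avg_i} via a straightforward Chebyshev-type inequality. Setting
\[
T(s) = \sum_{r \le K} \sum_{m \equiv \rho r + s\,(q)} \tau_{M,N}(m),
\]
the condition $\bigl| 1 - \frac{q}{KMN} T(s)\bigr| > \nu$ is equivalent to $\bigl|T(s) - \frac{KMN}{q}\bigr| > \frac{\nu KMN}{q}$. First I would apply the trivial Markov-type inequality
\[
\#\Bigl\{s \in \{1,\dots,q\} : \bigl|T(s) - \tfrac{KMN}{q}\bigr| > \tfrac{\nu KMN}{q}\Bigr\} \le \Bigl(\tfrac{q}{\nu KMN}\Bigr)^2 \sum_{s=1}^q \Bigl(T(s) - \tfrac{KMN}{q}\Bigr)^2.
\]

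Next I would insert the second-moment bound of Theorem \ref{avg_i} into the right-hand side, obtaining
\[
\#\{\cdots\} \le \frac{q^2}{\nu^2 K^2 M^2 N^2} \cdot \frac{K^2 N^{\max(7/2,\,4-\delta,\,4-\eta)+\varepsilon}}{q} = \frac{q}{\nu^2 M^2 N^2}\, N^{\max(7/2,\,4-\delta,\,4-\eta)+\varepsilon}.
\]
Using $M \asymp N$ so that $M^2 N^2 \asymp N^4$, and the identity $\max(7/2,\, 4-\delta,\, 4-\eta) - 4 = -\min(1/2,\delta,\eta)$, the right-hand side simplifies to $\ll q\, N^{-\min(\delta,\,1/2,\,\eta)+\varepsilon}$, where $\nu$ (and the implicit constants from $M \asymp N$) are absorbed into the implied constant. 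This gives the stated bound with the required uniformity in $K$, $q$ and $\rho$.

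There is essentially no obstacle here: the corollary is a routine consequence of the second-moment estimate already established in Theorem \ref{avg_i}. The only bookkeeping point worth flagging is that the final implied constant inherits a dependence on $\nu$, which is harmless since $\nu$ is a fixed parameter in the statement of the corollary.
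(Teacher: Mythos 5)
Your proposal is correct and is exactly the intended argument: the paper states Corollary \ref{cor_fd} as an immediate consequence of Theorem \ref{avg_i} without writing out the details, and the Chebyshev-type inequality combined with $M\asymp N$ and the identity $\max(7/2,4-\delta,4-\eta)-4=-\min(1/2,\delta,\eta)$ is precisely what is meant. The dependence of the implied constant on $\nu$ that you flag is indeed harmless.
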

In this connection it should be mentioned that if $s$ is ``bad'' then so is its neighbors. In fact we see that just one bad $s$ (in the sense of Corollary \ref{cor_fd}) will imply that there are at least $K^{1-\epsilon}$ bad values of $s$.

We now proceed to the proof of Theorem \ref{tau_individ}.
\begin{proof}[Proof of Theorem \ref{tau_individ}] Recall that $\phi(q)/q  \gg q^{-\epsilon}$ for any $\epsilon > 0$. Applying (\ref{r_coprime}) we see that
\begin{align*}
\sum_{(r,q) = k}\sum_{m\equiv  r (q)} \tau_{M,N}(m) &= \sum_{(m,q)=k} \tau_{M,N}(m)\\
&= \sum_{c \mid k}\phi(M/c, q/c) \phi(Nc/k,q/k)\\
&= \sum_{c \mid k}\frac{\phi(q/c)\phi(q/k)cMN}{q^2}+ O(q^\epsilon N).
\end{align*}
We proceed as in the proof of Theorem \ref{avg_i} and obtain using Lemma \ref{error_lemma_ii}
\begin{align*}
\sum_{(q,r) = k} \sum_{\substack{m \equiv r (q)\\ m' \equiv r (q)}}&\tau_{M,N}(m)\tau_{M,N}(m')\\ 
&= \sum_{d_1,d_2\mid k} \sum_{\substack{\alpha_1 \le \frac{M}{d_1}\\ (\alpha_1,q/d_1) = 1}}\sum_{\substack{\alpha_2 \le \frac{M}{d_2}\\ (\alpha_2,q/d_2) = 1}}\sum_{\substack{c \in \Z\\ (\alpha_1,\alpha_2)\mid c}}\\
&\phantom{==}\#\left\{x \in I\left(1,\frac{Nd_1}{k},\alpha_2,N\alpha_2 d_2, k\alpha_1,cq\right) \biggl| x \alpha_1 \equiv \frac{cq}{k} (\alpha_2), \ \left(x,\frac{q}{k}\right)=1 \right\}\\
&= \sum_{d_1,d_2\mid k}\sum_{\substack{\alpha_1 \le \frac{M}{d_1}\\ (\alpha_1,q/d_1) = 1}}\sum_{\substack{\alpha_2 \le \frac{M}{d_2}\\ (\alpha_2,q/d_2) = 1}}\sum_{\substack{c \in \Z\\ (\alpha_1,\alpha_2)\mid c}}
\frac{(\alpha_1,\alpha_2)\phi(q/k)k}{\alpha_2 q}\times\\
&\phantom{==} \int_0^{\frac{Nd_1}{k}} 1_{\{t \in \R \mid cq/(k\alpha_1) < t \le (cq+N\alpha_2 d_2)/(k \alpha_1)\}}(t)dt + O\left(\frac{N^{7/2+\epsilon}}{q}\right)\\
&= \frac{N^2\phi(q/k)}{q^2}\sum_{i=1,2}\sum_{d_i\mid k}d_1d_2\sum_{\substack{\alpha_1 \le \frac{M}{d_1}\\ (\alpha_1,q/d_1) = 1}}\sum_{\substack{\alpha_2 \le \frac{M}{d_2}\\ (\alpha_2,q/d_2) = 1}}1 + O\left(\frac{N^{\max(4-\delta,7/2)+\epsilon}}{q}\right)\\
&=\frac{M^2N^2\phi(q/k)}{q^2}\left(\sum_{d \mid k}d \phi(q/d) \right)^2+ O\left(\frac{N^{\max(4-\delta,7/2)+\epsilon}}{q}\right).
\end{align*}
This proves the theorem.
\end{proof}

Finally we prove Theorem \ref{main_conj_sup}.
\begin{proof}[Proof of Theorem \ref{main_conj_sup}]
We see that
\begin{align*}
\sum_{(\rho, q) = 1} \sum_{r \le K} \sum_{m\equiv \rho r(q)} \tau_{M,N}(m) &= \sum_{r \le K}\frac{\phi(q)}{\phi(q/(q,r))}\sum_{(m,q)=(r,q)} \tau_{M,N}(m)\\
&=\sum_{\substack{k\le K\\ k \mid q}} \frac{\phi(q)}{\phi(q/k)}\# \{r\in \N \mid r \le K, \ (r,q) = k\}\times\\
&\phantom{=}\#\{(a,b) \in \N^2 \mid a\le M,\ b \le N, \ (ab,q) = k\}.
\end{align*}
In the same way we see that
\begin{gather}
\begin{split}\label{ab_cop}
\#\{(a,b) \in \N^2 \mid a \le M,\ b &\le N, \ (ab,q) = k\}\\
&= \sum_{l \mid k} \#\{b\in \N \mid b \le N, \ (b,q) = l\}\times\\
&\phantom{=}\#\{a \in \N \mid a \le M, \ (a,q/l) = k/l\}\\
&=\sum_{l \mid k} \left(\frac{N}{q}\phi(q/l)+O(q^\varepsilon)\right)\left(\frac{Ml}{q}\phi(q/k)+O(q^\varepsilon)\right)\\
&=\frac{\phi(q/k)MN}{q^2}\sum_{l \mid k}l \phi(q/l)+O(Nq^{\varepsilon})\\
&\le\frac{MN \tau(k)}{k}+O(Nq^{\varepsilon}).
\end{split}
\end{gather}
Thus
\begin{gather}
\begin{split}\label{sum_ab}
\sum_{\substack{k\le K\\ k \mid q}} \#&\{(a,b) \in \N^2 \mid a\le M,\ b \le N, \ (ab,q) = k\}\\
 &= MN-\sum_{\substack{k > K\\ k \mid q}} \#\{(a,b) \in \N^2 \mid a\le M,\ b \le N, \ (ab,q) = k\}\\
&=MN+O\left(\frac{N^2q^\varepsilon}{K}+Nq^\epsilon \right).
\end{split}
\end{gather}
Combining (\ref{r_cop}) and (\ref{sum_ab}) it follows that
\begin{align}\label{mean}
\sum_{(\rho, q) = 1} \sum_{r \le K} \sum_{m\equiv \rho r(q)} \tau_{M,N}(m) =  \frac{\phi(q)KMN}{q} + O(N^2 q^\varepsilon + KNq^\epsilon).
\end{align}
Now we look at
\begin{align*}
\left(\sum_{r \le K} \sum_{m\equiv r\rho (q)}\tau_{M,N}(m) -\frac{KMN}{q}\right)^2
\end{align*}
and rewrite it as
\begin{align*}
&\left(\sum_{r \le K}\left( \sum_{m\equiv r\rho (q)}\tau_{M,N}(m) -\frac{MN}{q^2} \sum_{d \mid (r,q)}d\phi\left( \frac{q}{d}\right)\right)\right)^2+\left(\frac{MN}{q^2}\left(Kq-\sum_{r \le K}\sum_{d \mid (r,q)}d\phi\left( \frac{q}{d}\right) \right)\right)^2-\\
&2\left(\sum_{r \le K}\left( \sum_{m\equiv r\rho (q)}\tau_{M,N}(m) -\frac{MN}{q^2} \sum_{d \mid (r,q)}d\phi\left( \frac{q}{d}\right)\right)\right)\left(\frac{MN}{q^2}\left(Kq-\sum_{r \le K}\sum_{d \mid (r,q)}d\phi\left( \frac{q}{d}\right) \right)\right).
\end{align*}
Lemma \ref{Kq_lemma} implies that
\begin{align}\label{est_i}
\left\vert\frac{MN}{q^2}\left(Kq-\sum_{r \le K}\sum_{d \mid (r,q)}d\phi\left( \frac{q}{d}\right) \right)\right\vert \ll \frac{N^2(K+q) q^\epsilon}{q^2}.
\end{align}
We also see that Lemma \ref{Kq_lemma} and (\ref{mean}) implies
\begin{align}\label{est_ii}
\left\vert\sum_{(\rho,q) = 1}\sum_{r \le K}\left( \sum_{m\equiv r\rho (q)}\tau_{M,N}(m) -\frac{MN}{q^2} \sum_{d \mid (r,q)}d\phi\left( \frac{q}{d}\right)\right)\right\vert \ll K N^2 q^\varepsilon\left(\frac{1}{K}+\frac{1}{q} + \frac{1}{N}\right).
\end{align}
Thus it remains to look at
\begin{align}\label{new_mainterm}
\sum_{(\rho,q) = 1}\left(\sum_{r \le K}\left( \sum_{m\equiv r\rho (q)}\tau_{M,N}(m) -\frac{MN}{q^2} \sum_{d \mid (r,q)}d\phi\left( \frac{q}{d}\right)\right)\right)^2.
\end{align}
Using Cauchy-Schwarz inequality and Theorem \ref{tau_individ} we estimate (\ref{new_mainterm}) by
\begin{align*}
K \sum_{r \le K}&\sum_{(\rho,q) = 1}\left( \sum_{m\equiv r\rho (q)}\tau_{M,N}(m) -\frac{MN}{q^2} \sum_{d \mid (r,q)}d\phi\left( \frac{q}{d}\right)\right)^2\\
&=K\sum_{r \le K} \frac{\phi(q)}{\phi( q/(q,r))}\sum_{(s,q) = (r,q)}\left( \sum_{m\equiv s (q)}\tau_{M,N}(m) -\frac{MN}{q^2} \sum_{d \mid (r,q)}d\phi\left( \frac{q}{d}\right)\right)^2\\
&\ll \frac{KN^{\max(7/2,4-\delta)+\epsilon}}{q}\sum_{r \le K} \frac{\phi(q)}{\phi( q/(q,r))}\\
&\ll \frac{KN^{\max(7/2,4-\delta)+\epsilon}}{q}\sum_{r \le K} (q,r)\\
&\ll \frac{K^2N^{\max(7/2,4-\delta)+2\epsilon}}{q}
\end{align*}
Together with the estimates (\ref{est_i}) and (\ref{est_ii}) this proves the theorem.
\end{proof}

\section{Estimates for $\tau_M^*$ and $\tau_M$}
Through out this section we will restrict our discussion to $\tau_M^*$ though the results (with suitable modifications) clearly can be extended to cover $\tau_{M,N}$ as well.

We see that
\begin{align}\label{tau_as}
\sum_{m \le x} \tau_M(m) = \sum_{m \le x} \sum_{d=1}^{[M]} \delta_d(m) = \sum_{d=1}^{[M]} \frac{x}{d} + O(M) = x \log M + O(M+x).
\end{align}
Let $\tau$ denote the usual divisor function and note that
\begin{align*}
\tau_M^*(m) = \begin{cases} \tau(m) & {\rm if \ } m \le M\\ 2\tau_M(m)-\tau(m) & {\rm if \ } M < m < M^2\\ 0 & {\rm if \ } m \ge M^2 \end{cases}.
\end{align*}
It is well known that
\begin{align}\label{div_avg}
\sum_{m \le x} \tau(m) = x \log x + (2\gamma -1)x +O(\sqrt{x}).
\end{align}
Better estimates for the error term are known but that is irrelevant for our application. Thus
\begin{align*}
\sum_{m \le x}\tau_M^*(m) = \begin{cases} x \log x + (2\gamma -1)x +O(\sqrt{x}) & {\rm if \ } x \le M\\ x\log\frac{M^2}{x} + O(x) & {\rm if \ } M < x < M^2\\ [M]^2 & {\rm if \ } x \ge M^2 \end{cases},
\end{align*}
where the constants implied are absolute. 

Conjecture \ref{HB_conj} is probably hard to prove. We can however, give an estimate for the sum using a result due to Nair and Tenenbaum \cite{nair98}. Before we state the result we need to introduce some notation. Let $F:\N \to \R_+$. We say that $F \in \M(A,B,\epsilon)$ if $F$ satisfies (for $(m,n) = 1$)
\begin{align*}
F(mn) \le \min( A^{\Omega (m)}, Bm^\varepsilon)F(n)
\end{align*}
for some $A,B \ge 0$, where $\Omega (m)$ denotes the total number of prime factors of $m$, counted with multiplicity. The following theorem is an immediate consequence of \cite[Theorem~1]{nair98}:
\begin{theorem}\label{nair_tenenbaum}
Let $F \in \M(A,B,\frac{\epsilon\delta}{3})$, $0 < \delta \le 1$, $0 < \epsilon < \frac{1}{8}$. Then
\begin{align*}
\sum_{\substack{x < n \le x + y\\ n\equiv r (q)}} F(n) \ll \frac{y}{\phi(q)\log x} \sum_{\substack{n \le \frac{x}{q}\\(n,q)=1}} \frac{F(n)}{n}
\end{align*}
uniformly for $(r,q) = 1$, $x^{\frac{1+4 \epsilon\delta}{1+\delta}} \le y \le x$, $x \ge c_0 q^{1+\delta}$, where $c_0$ and the constant implied depends at most on $A$, $B$, $\delta$ and $\epsilon$.
\end{theorem}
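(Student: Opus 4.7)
The statement is essentially a re-packaging of the main theorem of Nair and Tenenbaum \cite{nair98}, so my plan is to quote their Theorem 1 directly rather than to reprove it. First I would verify that our hypotheses match theirs: Nair--Tenenbaum require $F \in \M(A,B,\eta)$ with $\eta$ below an explicit threshold depending on $\delta$ and $\epsilon$, and the choice $\eta = \epsilon\delta/3$ in our statement is made precisely to land inside their admissible range. I would then check that the size constraints $x^{(1+4\epsilon\delta)/(1+\delta)} \le y \le x$ and $x \ge c_0 q^{1+\delta}$ coincide with the ones in their statement, and that the coprimality condition $(r,q)=1$ is the standard one appearing there.

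With the hypotheses verified, the bound
\begin{align*}
\sum_{\substack{x < n \le x+y\\ n \equiv r (q)}} F(n) \ll \frac{y}{\phi(q)\log x} \sum_{\substack{n \le x/q\\ (n,q)=1}} \frac{F(n)}{n}
\end{align*}
follows immediately from their Theorem 1, with the implied constant depending only on $A$, $B$, $\delta$, $\epsilon$, exactly as claimed. No further analysis is required.

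The main obstacle is therefore not a mathematical one but a bookkeeping one: matching the notation of \cite{nair98} to the notation used here, and confirming that their admissible ranges for the submultiplicativity exponent, the interval length, and the modulus are at least as generous as the ones asserted in Theorem \ref{nair_tenenbaum}. If instead one had to reprove the bound from scratch in this special case, the main ingredients would be a Selberg-type upper-bound sieve applied to $(x,x+y] \cap (r + q\Z)$, a Brun--Titchmarsh style control over short-interval contributions, and a careful use of the submultiplicative growth of $F$ to reduce to the weighted sum $\sum_{n \le x/q,\,(n,q)=1} F(n)/n$. The genuinely hard step there is optimising the sieve parameters against the constraints linking $y$, $x$, and $q$, but none of that work is needed for the current claim.
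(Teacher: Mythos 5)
Your approach is exactly the paper's: Theorem \ref{nair_tenenbaum} is stated there as ``an immediate consequence of \cite[Theorem~1]{nair98}'' with no further proof given, so quoting Nair--Tenenbaum's Theorem 1 after verifying the hypotheses is precisely what is intended. The only work is the bookkeeping you describe, and your identification of $\eta = \epsilon\delta/3$ as the choice that places $F$ inside their admissible class is the right observation.
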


Proposition \ref{tenenbaum_result} can be proved quite easily, since $\tau_N^*$ is closely related to the Hooley $\Delta$-function defined by
\begin{align*}
\Delta(n) = \max_{u \in \R} \#\{d \in \N \mid e^u < d \le e^{u+1}, \ d \mid n\}.
\end{align*}
We see that for all $N \ge 1$, $k \in \N$ we have
\begin{align}
\tau_N^*(m) \le 2k\Delta(m)
\end{align}
whenever $\frac{N^2}{2^{k}} < m \le \frac{N^2}{2^{k-1}}$. One easily checks that $\Delta \in \M(2,B,\epsilon)$ for any $\epsilon > 0$ and suitable $B$ (chosen according to $\epsilon$). In \cite{tenenbaum90} the following estimate was proved
\begin{align*}
\sum_{n \le x} \frac{\Delta (n)}{n}  \ll  e^{\sqrt{(2+\varepsilon)(\log\log x)(\log\log\log x)}}\log x.
\end{align*}
Using Theorem \ref{nair_tenenbaum} this implies that 
\begin{align}\label{Delta_est}
\sum_{\substack{m \equiv r
(q)\\ N^2/2^k < m \le N^2/2^{k-1} }} \Delta(m) \ll \frac{N^2}{2^k\phi(q)}e^{\sqrt{(2+\varepsilon)(\log\log N)(\log\log\log N)}}.
\end{align}
Now choose $l$ such that $c_0 q^{1+\delta} < N^2/2^l < N^{2-\delta'}$ for
some fixed $\delta' > 0$. Then
\begin{align*}
\sum_{m \equiv r (q)} \tau_N^*(m) \le \sum_{\substack{m \equiv r (q)\\
m \le N^2/2^l}} \tau(m) + 2\sum_{k=1}^l k \sum_{\substack{m \equiv r
(q)\\ N/2^k < m \le N^2/2^{k-1} }} \Delta(m).
\end{align*}
From (\ref{Delta_est}) and the Linnik-Vinogradov estimate (\ref{lv_thm}) it follows that
\begin{align*}
\sum_{m \equiv r (q)} \tau_N^*(m) \ll \frac{N^2}{q} + \frac{N^2 e^{\sqrt{(2+\varepsilon)(\log\log N)(\log\log\log N)}}}{\varphi(q)}\sum_{k=1}^\infty
\frac{k}{2^k}.
\end{align*}
This proves Proposition \ref{tenenbaum_result}.

Note that for $x \gg 1$
\begin{align}\label{log_e_est}
e^{\sqrt{(2+\varepsilon)(\log\log x)(\log\log\log x)}} \ll (\log x)^{\epsilon '}
\end{align}
for any $\epsilon ' >0$. To see this note that
\begin{align*}
\frac{\log\log\log x}{\log \log x} \to 0
\end{align*}
as $x \to \infty$. In particular 
\begin{align*}
\frac{\log\log\log x}{\log \log x} \le \frac{\epsilon'^2}{2+\epsilon}
\end{align*}
for any $\epsilon' > 0$ for $x$ large. Hence
\begin{align*}
(2+\epsilon) (\log\log x) (\log\log \log x) \le (\epsilon'\log\log x)^2.
\end{align*}
From this (\ref{log_e_est}) follows easily.

To prove Theorem \ref{tau_N_thm} we need the following lemma.
\begin{lemma}\label{tauN_lemma}
Let $\delta,\epsilon > 0$. Let $q\le x^{1-\delta}$ and $2 \le N$. Assume also that $N \ge q^{\epsilon}$. Then
\begin{align*}
\sum_{\substack{n \le x\\(n,q)=1}} \frac{\tau_N(n)}{n} \ll \frac{\phi(q)^2 \log x\log N}{q^2}.
\end{align*}
\end{lemma}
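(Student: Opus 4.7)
I would start by noting that we may assume $N \le x$ (for $N > x$ the sum agrees with the case $N'=x$ and the bound follows from the classical $\tau$-estimate), then expand $\tau_N$ as a divisor sum and switch the order of summation:
\begin{align*}
\sum_{\substack{n \le x\\(n,q)=1}} \frac{\tau_N(n)}{n} = \sum_{\substack{a \le N\\ (a,q)=1}} \frac{1}{a}\,U(x/a), \qquad U(Y) := \sum_{\substack{b \le Y\\ (b,q)=1}} \frac{1}{b}.
\end{align*}
This reduces the problem to bounding two truncated coprime sums, from each of which I plan to extract one factor of $\phi(q)/q$.

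The key auxiliary estimate is
\begin{align*}
U(Y) \ll \frac{\phi(q)}{q}\log(Y+1) \qquad \text{for } Y \ge 2\omega(q),
\end{align*}
where $\omega(q)$ denotes the number of distinct prime factors of $q$. By Mertens, $U(Y) \le \prod_{p \le Y,\, p \nmid q}(1-1/p)^{-1} \ll \log Y \cdot \prod_{p \mid q,\, p \le Y}(1-1/p)$, and the missing Euler factors above $Y$ contribute at most $\prod_{p \mid q,\, p > Y}(1-1/p)^{-1} \le \exp(2\omega(q)/Y) = O(1)$ as soon as $Y \ge 2\omega(q)$. For $Y < 2\omega(q)$ I fall back on the trivial bound $U(Y) \le \log Y + 1 \ll \log\log q$.

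Next I would split the outer sum at $A := x/(2\omega(q))$. On the main range $a \le \min(N,A)$ both applications of the key estimate are available — to $U(x/a)$ because $x/a \ge 2\omega(q)$, and to $\sum_{a\le N,(a,q)=1}1/a$ because $N \ge q^{\varepsilon} \gg \omega(q)$ — and their product gives
\begin{align*}
\sum_{\substack{a \le \min(N,A)\\(a,q)=1}} \frac{U(x/a)}{a} \ll \frac{\phi(q)}{q}\log x \sum_{\substack{a \le N\\(a,q)=1}}\frac{1}{a} \ll \frac{\phi(q)^2}{q^2}\log x\,\log N,
\end{align*}
which is exactly the target bound. On the tail $A < a \le N$ (only relevant when $N > A$) I use the trivial $U(x/a) \ll \log\log q$, while $\sum_{A < a \le N} 1/a \le \log(N\omega(q)/x) + O(1) \ll \log\log q$ using $N \le x$ and $\omega(q) \ll \log q$; the total tail contribution is thus $O((\log\log q)^2)$.

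The main obstacle is showing this tail is absorbed into the main term, and this is where the hypothesis $N \ge q^{\varepsilon}$ is fully used. It gives $\log N \gg_{\varepsilon} \log q$, so combined with $\phi(q)/q \gg 1/\log\log q$ the main term is $\gg (\log x)(\log q)/(\log\log q)^2$, which dominates $(\log\log q)^2$ since $(\log\log q)^4 = o(\log q)$. Without the hypothesis $N \ge q^{\varepsilon}$ the lemma would fail for $q$ with very many small prime factors, where $N \ge 2\omega(q)$ need not hold and the Mertens bound on $\sum_{a \le N,\,(a,q)=1}1/a$ would lose the full factor of $\phi(q)/q$; the only real subtlety in the proof is this bookkeeping between the two regimes.
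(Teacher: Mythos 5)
Your proof is correct and follows essentially the same route as the paper: expand $\tau_N$ as a divisor sum, swap the order of summation, and bound each coprime harmonic sum by $\frac{\phi(q)}{q}\log(\cdot)$ up to losses of size $O((\log\log q)^2)$ that are absorbed using $N\ge q^{\epsilon}$ and $q\le x^{1-\delta}$. The only differences are in the details: you prove the auxiliary estimate via Euler products and Mertens where the paper uses M\"obius inversion and a von Mangoldt computation, and your splitting of the $a$-range at $x/(2\omega(q))$ could be avoided altogether by bounding $U(x/a)\le U(x)$ using positivity of the terms, as the paper implicitly does.
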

\begin{proof}
We see that
\begin{align}\label{lemma_eq}
\sum_{\substack{n \le x\\(n,q)=1}} \frac{\tau_N(n)}{n} = \sum_{\substack{ab \le x, \ a \le N\\ (a,q)=(b,q)=1}} \frac{1}{ab} = \sum_{\substack{a \le N\\ (a,q)=1}}\frac{1}{a} \sum_{\substack{b \le x/a\\ (b,q)=1}}\frac{1}{b}.
\end{align}
Thus we must consider
\begin{align*}
\sum_{\substack{a \le Y\\ (a,q)=1}}\frac{1}{a} &= \sum_{a \le Y}\frac{1}{a}\sum_{\substack{d\mid q\\ d \mid a}}\mu(d)\\
&= \sum_{d\mid q}\frac{\mu(d)}{d} \sum_{\alpha \le Y/d}\frac{1}{\alpha}\\
&= \sum_{d\mid q}\frac{\mu(d)}{d}\left(\log \frac{Y}{d} + \gamma +O\left(\frac{d}{Y}\right) \right)\\
&= (\log Y + \gamma)\frac{\phi(q)}{q} + O\left(\frac{\tau(q)}{Y}\right)-\sum_{d\mid q}\frac{\mu(d)\log d}{d}.
\end{align*}
Let $q'$ denote the square free part of $q$ and $p$ be a prime number. We see that
\begin{align*}
\sum_{d\mid q} \frac{\mu (d) \log d}{d} &= \sum_{d\mid q} \frac{\mu (d)}{d} \sum_{c \mid d} \Lambda (c)\\
&= \sum_{c \mid q'} \Lambda (c)\sum_{\substack{c\mid d\\ d\mid q'}} \frac{\mu (d)}{d}\\
&= \sum_{p \mid q'} \Lambda (p)\sum_{\delta\mid \frac{q'}{p}} \frac{\mu (\delta p)}{\delta p}\\
&= -\sum_{p \mid q'} \frac{\Lambda (p)}{p}\sum_{\delta\mid \frac{q'}{p}} \frac{\mu (\delta )}{\delta}.
\end{align*}
Thus
\begin{align*}
\sum_{d\mid q} \frac{\mu (d) \log d}{d} = O\left(\sum_{p \mid q'} \frac{\Lambda(p)}{p}\right) = O\left(\sum_{p \mid q} \frac{\log p}{p}\right).
\end{align*}
We split the last sum in two parts:
\begin{align*}
\sum_{p \mid q} \frac{\log p}{p} = \sum_{\substack{p \mid q\\ p \le (\log q)^2}} \frac{\log p}{p} + \sum_{\substack{p \mid q\\ p > (\log q)^2}} \frac{\log p}{p}.
\end{align*}
Clearly
\begin{align*}
\sum_{\substack{p \mid q\\ p > (\log q)^2}} \frac{\log p}{p} \le (\log q)^{-2}\sum_{p \mid q} \log p \le 1.
\end{align*}
We know that
\begin{align*}
\sum_{p \le x} \frac{1}{p} = O(\log \log x)
\end{align*}
Hence
\begin{align*}
\sum_{\substack{p \mid q\\ p \le (\log q)^2}} \frac{\log p}{p} \le 2\log \log q \sum_{p \le q} p^{-1} = O((\log \log q)^2).
\end{align*}
From this it follows that
\begin{align*}
\sum_{\substack{a \le Y\\ (a,q)=1}}\frac{1}{a} = (\log Y + \gamma)\frac{\phi(q)}{q} + O\left(\frac{\tau(q)}{Y}\right)+O((\log\log q)^2).
\end{align*}
Recall that $\phi(q) \gg \frac{q}{\log\log q}$. Thus it follows that
\begin{align*}
\sum_{\substack{a \le N\\ (a,q)=1}}\frac{1}{a} \ll \frac{\phi(q)\log N}{q}
\end{align*}
and
\begin{align*}
\sum_{\substack{b \le x\\ (b,q)=1}}\frac{1}{b} \ll \frac{\phi(q)\log x}{q}.
\end{align*}
The result now follows from (\ref{lemma_eq}).
\end{proof}

Theorem \ref{tau_N_thm} follows immediately from Theorem \ref{nair_tenenbaum} and Lemma \ref{tauN_lemma} since $\tau_N \in \M(2,B,\epsilon)$.

\section{Proof of Theorem \ref{lattice}}
We follow Section 5 in \cite{heathbrownPP}. For $\delta \in (0,1)$ define
\begin{align*}
\calR (M,\alpha,\delta) = \# \{(x,y) \in \Z^2 \mid \left\vert x\alpha-y\right\vert \le \delta, \ \vert x \vert \le M \}.
\end{align*}
The proof of Theorem \ref{lattice} is based on the following identity
\begin{align*}
\calS(M,N,K,p,q) = \sum_{a \le N} \calR (M,ap/q,K/q)  = N + 2\sum_{\vert r \vert \le K}\sum_{pm \equiv r(q)}\tau_{M,N}(m).
\end{align*}
We can transform it into a lattice point problem since
\begin{align*}
\{(x,y) \in \Z^2 \mid \left\vert x\alpha-y\right\vert \le \delta, \ \vert x \vert \le M \} = \{(x,y) \in \Z^2 \mid x\mathbf{u}+y\mathbf{v} \in [-\sqrt{M\delta},\sqrt{M\delta}]^2 \}
\end{align*}
where $\mathbf{u} = (\sqrt{\delta/M},\alpha \sqrt{M/\delta})$ and $\mathbf{v} = (0,-\sqrt{M/\delta})$. Since $\mathbf{u}$ and $\mathbf{v}$ generate a lattice of determinant $1$ we obtain
\begin{align*}
\calR (M,\alpha,\delta) = 4M\delta + O(\sqrt{M\delta}/\lambda_1) + O(1),
\end{align*}
where $\lambda_1$ is the length of the shortest non-zero vector in the lattice. We have $\delta = K/q$ and $\alpha = ap/q$ in our case. Thus we expect that the main term in $\calS(M,N,K,p,q)$ is $KMN/q$. 
It is the error term $O(\sqrt{M\delta}/\lambda_1)$ that needs attention. In particular one is concerned with the case where $\lambda_1$ is small. The idea is to consider $\sqrt{M\delta}/\lambda_1$ in dyadic intervals
\begin{align*}
E < \sqrt{M\delta}/\lambda_1 \le 2 E.
\end{align*}
Note that $E$ can be at most $M$ since $\lambda_1 \ge \sqrt{\delta/M}$. The $a$'s for which $E \le \sqrt{MK/q}$ contribute $N\sqrt{KM/q}$ to the error term for $\calS(M,N,K,p,q)$. Following Lemma 4 in \cite{heathbrownPP} the contribution from values of $a$ for which $E \ge \sqrt{MK/q}$ can be estimated by
\begin{align*}
\sum_{\sqrt{MK/q} \le E=2^k \le M} \sum_{1 \le F=2^h \le N} EFV(M/E,N/F,K/(qEF),p/q),
\end{align*}
where
\begin{align*}
V(A,B,D,\beta) = \# \left\{ (a,b,z) \in \N^2 \times \Z \mid a \le A, \ b \le B, \ (ab,z)=1, \ \vert ab\beta - z\vert \le D \right\}.
\end{align*}
Using Lemma 6 and Lemma 7 in \cite{heathbrownPP} we obtain the estimate
\begin{align*}
\sum_{\sqrt{MK/q} \le E=2^k \le M} \sum_{\substack{1 \le F=2^h \le N\\ EF \le (\log N)^{5/4}}} EFV(M/E,N/F,K/(qEF),p/q)  \ll N(KN/q)^{7/8}.
\end{align*}
Clearly we have
\begin{align*}
V(A,B,D,p/q) \le \sum_{\vert r \vert \le qD} \sum_{\substack{m \le AB\\ pm \equiv r (q)}}\tau_{A,B}(m) \le \sum_{\vert r \vert \le qD} \sum_{\substack{m \le AB\\ pm \equiv r (q)}}\tau(m).
\end{align*}
The previous results in the present paper suggest that there is a loss of roughly a factor $\log(AB)$ in the last inequality, but the last estimate will be sufficient for our purpose. 

We now need the fact that $p/q$ is of type $(2+\gamma,\calK)$. This implies
\begin{align*}
\left\vert \frac{p}{q}-\frac{z}{xy}\right\vert \ge \frac{1}{\calK (xy)^{2+\gamma}}
\end{align*}
unless $xy \mid q$ (remember that $(xy,z) = 1$). Thus if
\begin{align*}
\left\vert \frac{xyp}{q}-z\right\vert \le \frac{K}{qEF}
\end{align*}
we conclude that
\begin{align*}
(xy)^{1+\gamma} \ge \frac{qEF\calK}{K}
\end{align*}
unless $xy \mid q$. Thus for such $xy$ we can assume that
\begin{align*}
EF \le \left(\frac{(MN)^{1+\gamma}K}{q \calK}\right)^{\frac{1}{2+\gamma}}.
\end{align*}
Using the assumption (\ref{lattice_tech_cond}) we see that
\begin{align}\label{lv_cond}
\frac{MN}{EF} \ge \frac{MN}{\left(\frac{(MN)^{1+\gamma}K}{q\calK}\right)^{\frac{1}{2+\gamma}}} \gg q^{1+\frac{\delta(1+\gamma)}{2+\gamma}}.
\end{align}
It follows that
\begin{align*}
V(M/E,N/F,K/(qEF),p/q) \le \sum_{\vert r \vert \le \frac{K}{EF}} \sum_{\substack{m \le \frac{MN}{EF}\\ pm \equiv r (q)}}\tau(m) + \tau(q)^2.
\end{align*}
Using the Linnik-Vinogradov estimate (\ref{lv_thm}) one easily deduces that for $q \ll x^{1-\delta}$
\begin{align*}
\sum_{\vert r \vert \le S} \sum_{\substack{m \le x\\ m \equiv s (q)}}\tau(m) \ll \left(\tau(q)^2 + S (\log \log q)^2 \right) \frac{x \log x}{q},
\end{align*}
where the constant implied depends on $\delta$ only. Since (\ref{lv_cond}) holds we can use this result and we obtain
\begin{align*}
V(M/E,N/F,K/(qEF),p/q) \ll \left(\tau(q)^2 + \frac{K}{EF} (\log \log q)^2 \right) \frac{N^2 \log N}{EF q}.
\end{align*}
Clearly
\begin{align*}
\sum_{\sqrt{MK/q} \le E=2^k \le M} \sum_{\substack{1 \le F=2^h \le N\\ EF \ge (\log N)^{5/4}}} 1 \ll (\log N)^2.
\end{align*}
For $t \in (0,1)$ we recall the formula
\begin{align*}
\sum_{j=0}^n (j+1)t^j = \frac{(t-1)(n+1)t^n+1-t^{n+1}}{(1-t)^2}.
\end{align*}
Using this we see that
\begin{align*}
\sum_{1 \le E=2^k \le M} \sum_{\substack{1 \le F=2^h \le N\\ EF \ge (\log N)^{5/4}}} \frac{1}{EF} \ll \sum_{\frac{5}{4}\log \log N \le l \le 3\log N} \frac{l+1}{2^l} \ll \frac{\log\log N}{(\log N)^{5/4}}.
\end{align*}
This implies
\begin{align*}
\sum_{\sqrt{MK/q} \le E=2^k \le M} \sum_{\substack{1 \le F=2^h \le N\\ EF \ge (\log N)^{5/4}}} &EFV(M/E,N/F,K/(qEF),p/q)\\
&\ll \frac{N^2}{q}\left(\tau(q)^2(\log N)^3+\frac{K(\log \log N)^2}{(\log N)^{1/4}}\right).
\end{align*}

\end{document}